\newtheorem{n}{}[section]
\theoremstyle{plain}
\newtheorem{conjecture}[n]{Conjecture}
\newtheorem{corollary}[n]{Corollary}
\newtheorem{lemma}[n]{Lemma}
\newtheorem{proposition}[n]{Proposition}
\newtheorem{theorem}[n]{Theorem}
\theoremstyle{definition}
\newtheorem{example}[n]{Example}
\newtheorem{remark}[n]{Remark}
\DeclareFontFamily{U}{wncyr}{}
\DeclareFontShape{U}{wncyr}{m}{n}{<->wncyr10}{}
\DeclareSymbolFont{cyr}{U}{wncyr}{m}{n}
\DeclareMathSymbol{\Sha}{\mathord}{cyr}{"58}
\renewcommand{\a}{\mathrm{a}}
\newcommand{\AAA}{\mathcal{A}}
\newcommand{\BSD}{\mathrm{BSD}}
\renewcommand{\c}{\mathrm{c}}
\newcommand{\CC}{\mathbb{C}}
\renewcommand{\d}{\mathrm{d}}
\newcommand{\FF}{\mathbb{F}}
\newcommand{\Fr}{\mathrm{Fr}}
\newcommand{\G}{\mathrm{G}}
\newcommand{\Gal}{\mathrm{Gal}}
\newcommand{\GL}{\mathrm{GL}}
\newcommand{\h}{\mathrm{h}}
\renewcommand{\H}{\mathrm{H}}
\newcommand{\HHH}{\mathcal{H}}
\newcommand{\im}{\mathrm{im}}
\renewcommand{\L}{\mathrm{L}}
\newcommand{\LLL}{\mathcal{L}}
\newcommand{\M}{\mathrm{M}}
\newcommand{\Nm}{\mathrm{Nm}}
\newcommand{\NN}{\mathbb{N}}
\newcommand{\ord}{\mathrm{ord}}
\newcommand{\PGL}{\mathrm{PGL}}
\newcommand{\PSL}{\mathrm{PSL}}
\newcommand{\QQ}{\mathbb{Q}}
\newcommand{\Reg}{\mathrm{Reg}}
\newcommand{\rk}{\mathrm{rk}}
\newcommand{\RR}{\mathbb{R}}
\renewcommand{\S}{\mathrm{S}}
\newcommand{\SSS}{\mathcal{S}}
\newcommand{\SL}{\mathrm{SL}}
\newcommand{\Tam}{\mathrm{Tam}}
\newcommand{\tor}{\mathrm{tor}}
\newcommand{\tr}{\mathrm{tr}}
\newcommand{\X}{\mathrm{X}}
\newcommand{\ZZ}{\mathbb{Z}}
\newcommand{\abr}[2][-1]{\!\ifthenelse{\equal{#1}{-1}}{\left\langle#2\right\rangle}{}\ifthenelse{\equal{#1}{0}}{\langle#2\rangle}{}\ifthenelse{\equal{#1}{1}}{\bigl\langle#2\bigr\rangle}{}}
\newcommand{\br}{\!\del}
\let\cb\cbr\renewcommand{\cbr}{\!\cb}
\newcommand{\fbr}[2][-1]{\!\ifthenelse{\equal{#1}{-1}}{\left\lfloor#2\right\rfloor}{}\ifthenelse{\equal{#1}{0}}{\lfloor#2\rfloor}{}\ifthenelse{\equal{#1}{1}}{\bigl\lfloor#2\bigr\rfloor}{}}
\let\sb\sbr\renewcommand{\sbr}{\!\sb}
\newcommand{\st}{\ \middle| \ }
\newcommand{\twobytwo}[4]{\begin{pmatrix} #1 & #2 \\ #3 & #4 \end{pmatrix}}
\newcommand{\twobytwosmall}[4]{\begin{psmallmatrix} #1 & #2 \\ #3 & #4 \end{psmallmatrix}}
\title{L-values of elliptic curves twisted by cubic characters}
\author{David Kurniadi Angdinata}
\begin{document}

\maketitle

\begin{abstract}
Given a rational elliptic curve $ E $ of analytic rank zero, its L-function can be twisted by an even primitive Dirichlet character $ \chi $ of order $ q $, and in many cases its associated central algebraic L-value $ \LLL\br{E, \chi} $ is known to be integral. This paper derives some arithmetic consequences from a congruence between $ \LLL\br{E, 1} $ and $ \LLL\br{E, \chi} $ arising from this integrality, with an emphasis on cubic characters $ \chi $. These include $ q $-adic valuations of the denominator of $ \LLL\br{E, 1} $, determination of $ \LLL\br{E, \chi} $ in terms of Birch--Swinnerton-Dyer invariants, and asymptotic densities of $ \LLL\br{E, \chi} $ modulo $ q $ by varying $ \chi $.
\end{abstract}

\tableofcontents

\subsection*{Acknowledgements}

I would like to thank Vladimir Dokchitser for suggesting the problem and guidance throughout, and Harry Spencer for feedback on a prior draft. I would also like to thank Peiran Wu and Jacob Fjeld Grevstad for assistance on some of the group theory arguments. This work was supported by the Engineering and Physical Sciences Research Council [EP/S021590/1], the EPSRC Centre for Doctoral Training in Geometry and Number Theory (The London School of Geometry and Number Theory), University College London.

\section{Introduction}

The Hasse--Weil L-function $ \L\br{E, s} $ of an elliptic curve $ E $ over $ \QQ $ can be twisted by a primitive Dirichlet character $ \chi $ to get a twisted L-function $ \L\br{E, \chi, s} $. The algebraic and analytic properties of these L-functions are studied extensively in the literature, and they are the subject of many problems in the arithmetic of elliptic curves. Classically, the Birch--Swinnerton-Dyer conjecture relates $ \L\br{E, s} $ to certain algebraic invariants of $ E $, which encode important global arithmetic information of $ E $. When $ E $ is base changed to a cyclotomic extension $ K $ of $ \QQ $, Artin's formalism for L-functions says that $ \L\br[0]{E / K, s} $ decomposes into a product of twisted L-functions $ \L\br{E, \chi, s} $ over all Dirichlet characters $ \chi $ that factor through $ K $, so the behaviour of $ \L\br[0]{E / K, s} $, and hence the arithmetic of $ E / K $, is completely governed by these twisted L-functions.

\pagebreak

By the modularity theorem, $ \L\br{E, \chi, s} $ admits an analytic continuation to all $ s \in \CC $, so its value at $ s = 1 $ is well-defined. When it does not vanish, $ \L\br{E, \chi, 1} $ can be normalised by periods to get an algebraic twisted L-value $ \LLL\br{E, \chi} $. Under some mild hypotheses on $ E $ and $ \chi $, Wiersema--Wuthrich proved that $ \LLL\br{E, \chi} \in \ZZ\sbr{\zeta_q} $ for non-trivial primitive Dirichlet characters $ \chi $ of order $ q $ \cite[Theorem 2]{WW22}, by expressing $ \LLL\br{E, \chi} $ in terms of Manin's modular symbols. Parts of their argument can be adapted to obtain a congruence between $ \LLL\br{E, \chi} $ and $ \LLL\br{E, 1} $ modulo the prime $ \abr{1 - \zeta_q} $ in $ \ZZ\sbr{\zeta_q} $ above $ q $, and this turns out to have arithmetic consequences for $ \LLL\br{E, \chi} $. This paper derives a few consequences for when $ q = 3 $.

As one consequence, this addresses a problem of Dokchitser--Evans--Wiersema, in an attempt to obtain a Birch--Swinnerton-Dyer-type formula for twisted L-values \cite{DEW21}. Under standard arithmetic conjectures, they expressed the norm of $ \LLL\br{E, \chi} $ in terms of the Birch--Swinnerton-Dyer quotients $ \BSD\br{E} $ and $ \BSD\br[0]{E / K} $, where $ K $ is the number field that $ \chi $ factors through, and deduced furthermore that there are only finitely many possibilities for the ideal generated by $ \LLL\br{E, \chi} $ \cite[Theorem 38]{DEW21}. A subsequent refinement by Burns--Castillo recovers the ideal factorisation of $ \LLL\br{E, \chi} $ in terms of the Galois module structure of the Tate--Shafarevich group \cite[Remark 7.4]{BC21}, but the precise value of $ \LLL\br{E, \chi} $ is only predicted up to a unit in $ \ZZ\sbr{\zeta_q} $. In particular, there are pairs of elliptic curves with identical Birch--Swinnerton-Dyer quotients but with distinct twisted L-values \cite[Section 4]{DEW21}. The following result determines this unit for cubic characters, and hence explains their examples, under an assumption on the Manin constant $ \c_0\br{E} $.

\begin{theorem}[Corollary \ref{cor:cubic}]
Let $ E $ be an elliptic curve over $ \QQ $ of conductor $ N $, and let $ \chi $ be a cubic Dirichlet character of odd prime conductor $ p \nmid N $ such that $ 3 \nmid \c_0\br{E}\BSD\br{E}\#E\br{\FF_p} $. Assume that Stevens's conjecture holds for $ E $, and that the Birch--Swinnerton-Dyer conjecture holds for $ E $ over number fields. Then
$$ \LLL\br{E, \chi} = u \cdot \overline{\chi\br{N}}\dfrac{\sqrt{\BSD\br[0]{E / K}}}{\sqrt{\BSD\br{E}}}, $$
for some sign $ u = \pm1 $, chosen such that
$$ u \equiv -\#E\br{\FF_p}\dfrac{\sqrt{\BSD\br{E}}^3}{\sqrt{\BSD\br[0]{E / K}}} \mod 3. $$
\end{theorem}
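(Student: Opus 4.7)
The plan is to combine the already-known ideal factorisation of $ \LLL\br{E, \chi} $ with the mod-$ \abr{1 - \zeta_3} $ congruence between $ \LLL\br{E, \chi} $ and $ \LLL\br{E, 1} $ that the introduction promises, using the former to express $ \LLL\br{E, \chi} $ up to a sign and the latter to pin down that sign.

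First, I would appeal to Dokchitser--Evans--Wiersema, which under BSD over $ K $ gives $ \Nm_{\QQ\br{\zeta_3} / \QQ} \LLL\br{E, \chi} = \BSD\br[0]{E / K} / \BSD\br{E} $ and hence fixes $ \left| \LLL\br{E, \chi} \right| = \sqrt{\BSD\br[0]{E / K} / \BSD\br{E}} $, and then to Burns--Castillo, which pins down the ideal $ \br{\LLL\br{E, \chi}} $ precisely, with the epsilon-factor contribution supplying the $ \overline{\chi\br{N}} $ factor. This gives
$$ \LLL\br{E, \chi} = u' \cdot \overline{\chi\br{N}} \cdot \dfrac{\sqrt{\BSD\br[0]{E / K}}}{\sqrt{\BSD\br{E}}} $$
for some unit $ u' \in \ZZ\sbr{\zeta_3}^\times $. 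Comparing this expression for $ \chi $ and $ \overline{\chi} $ under complex conjugation (which swaps $ \LLL\br{E, \chi} $ with $ \LLL\br[0]{E, \overline{\chi}} $ and $ \overline{\chi\br{N}} $ with $ \chi\br{N} $ but fixes the positive real square roots) forces $ \overline{u'} = u' $, and since $ \pm 1 $ are the only real units of $ \ZZ\sbr{\zeta_3} $, $ u' $ is already in $ \cbr{\pm 1} $.

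To determine the sign, I would substitute into the mod-$ \abr{1 - \zeta_3} $ congruence. Adapting Wiersema--Wuthrich's modular-symbol argument with the local Euler factor at $ p $ (the unique ramified prime, where $ 1 - a_p + p = \#E\br{\FF_p} $) yields a congruence of the shape $ \LLL\br{E, \chi} \equiv -\#E\br{\FF_p} \cdot \LLL\br{E, 1} \pmod{1 - \zeta_3} $. Using that every cube root of unity reduces to $ 1 $ modulo $ 1 - \zeta_3 $, so $ \overline{\chi\br{N}} \equiv 1 $, and that BSD in analytic rank zero gives $ \LLL\br{E, 1} = \BSD\br{E} $, substitution produces
$$ u \cdot \dfrac{\sqrt{\BSD\br[0]{E / K}}}{\sqrt{\BSD\br{E}}} \equiv -\#E\br{\FF_p} \cdot \BSD\br{E} \pmod{1 - \zeta_3}. $$
The coprimality hypothesis $ 3 \nmid \c_0\br{E} \BSD\br{E} \#E\br{\FF_p} $ makes each factor here a $ 3 $-adic unit, so rearranging inside $ \ZZ\sbr{\zeta_3} / \abr{1 - \zeta_3} \cong \FF_3 $ yields the claimed formula for $ u $.

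The main obstacle is giving meaning to the square roots as $ 3 $-adic units. This requires $ \#\Sha\br{E / \QQ} $ and $ \#\Sha\br[0]{E / K} $ to be squares modulo $ 3 $-adic units, which for $ E / \QQ $ follows from the Cassels--Tate pairing under $ 3 \nmid \BSD\br{E} $, and over $ K $ requires a parallel argument together with compatibility of the Tamagawa, torsion, and Manin-constant contributions (controlled by $ 3 \nmid \c_0\br{E} $ and $ p \nmid N $). A secondary subtlety is identifying the precise $ -1 $ sign in the mod-$ \abr{1 - \zeta_3} $ congruence, which emerges only after careful bookkeeping of Gauss sums and period normalisations in the modular-symbol formula; here Stevens's conjecture is needed to align $ \LLL\br{E, 1} $ with its modular-symbol expression so that the comparison with $ \LLL\br{E, \chi} $ is clean.
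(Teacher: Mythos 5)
Your overall skeleton is the same as the paper's: first express $ \LLL\br{E, \chi} $ as $ \pm\overline{\chi\br{N}}\sqrt{\BSD\br[0]{E / K}/\BSD\br{E}} $ using the BSD-conditional results of Dokchitser--Evans--Wiersema, then pin down the sign by reducing modulo $ \abr{1 - \zeta_3} $ via the congruence $ \LLL\br{E, \chi} \equiv -\#E\br{\FF_p}\LLL\br{E} $ of Corollary \ref{cor:congruence} together with $ \LLL\br{E} = \BSD\br{E} $. The second half of your argument (the sign determination, including the observation that the relevant quantities are $3$-adic units so the congruence can be rearranged in $ \FF_3 $) is exactly what the paper does.

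The gap is in the first half, namely in how you get the unit down to $ \pm1 $ times $ \overline{\chi\br{N}} $. Burns--Castillo only recovers the \emph{ideal} generated by $ \LLL\br{E, \chi} $ -- as the paper itself stresses, the value is thereby determined only up to a unit of $ \ZZ\sbr{\zeta_3} $ -- so it cannot "supply the $ \overline{\chi\br{N}} $ factor". And your complex-conjugation comparison does not force $ \overline{u'} = u' $: conjugating your identity for $ \chi $ merely produces the corresponding identity for $ \overline{\chi} $ with unit $ \overline{u'} $, and since the unit genuinely depends on the character there is no a priori identification of the unit attached to $ \overline{\chi} $ with the one attached to $ \chi $; Galois equivariance alone therefore does not exclude $ u' = \pm\zeta_3^{\pm1} $. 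The statement you actually need -- that $ \chi\br{N}\LLL\br{E, \chi} $ lies in $ \ZZ\sbr{\zeta_3}^+ = \ZZ $ with $ \abs{\Nm_3^+\br{\chi\br{N}\LLL\br{E, \chi}}} = \sqrt{\BSD\br[0]{E / K}/\BSD\br{E}} $ -- is the nontrivial root-number/functional-equation input of Dokchitser--Evans--Wiersema's Theorem 13, restated as Proposition \ref{prop:bsd} in the paper (whose hypotheses are checked via Proposition \ref{prop:valuation}; that is where Stevens's conjecture really enters, not in aligning $ \LLL\br{E} $ with its modular-symbol expression). Once that is cited, your worry about Cassels--Tate is unnecessary: for $ q = 3 $ the norm statement already forces $ \sqrt{\BSD\br[0]{E / K}/\BSD\br{E}} $ to be a rational integer, and it is prime to $ 3 $ because $ \LLL\br{E, \chi} \not\equiv 0 \bmod \abr{1 - \zeta_3} $ under the hypothesis $ 3 \nmid \BSD\br{E}\#E\br{\FF_p} $.
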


In particular, all of the arithmetically identical pairs of elliptic curves considered by Dokchitser--Evans--Wiersema have distinct twisted L-values simply because their number of points modulo $ p $ are distinct modulo $ 3 $. Section \ref{sec:unit} proves this result and clarifies their examples in more detail.

As another consequence, this explains some of the patterns in Kisilevsky--Nam, where they made heuristic predictions on the asymptotic distribution of twisted L-values \cite{KN22}. To support these predictions, they considered six elliptic curves $ E $ of analytic rank zero and five positive integers $ q $, and numerically computed the norms of L-values of $ E $ twisted by primitive Dirichlet characters $ \chi $ of conductor $ p $ and order $ q $, for each of the thirty pairs $ \br{E, q} $ and over millions of positive integers $ p $ \cite[Section 7]{KN22}. For each pair $ \br{E, q} $, they added a normalisation factor to $ \LLL\br{E, \chi} $ to obtain a real value $ \LLL^+\br{E, \chi} $, and empirically determined the greatest common divisor $ \gcd_{E, q} $ of the norms of $ \LLL^+\br{E, \chi} $ by varying over all $ p $. Upon dividing these norms by $ \gcd_{E, q} $ to get an integer $ \widetilde{\LLL}^+\br{E, \chi} $, they observed that these integers have unexpected biases when reduced modulo $ q $. The following result explains these biases modulo $ 3 $ for many elliptic curves.

\begin{theorem}[Proposition \ref{prop:cubic}]
Let $ E $ be an elliptic curve over $ \QQ $ of conductor $ N $ and discriminant $ \Delta = \pm N^n $ for some $ 3 \nmid n $ with no rational $ 3 $-isogeny, such that $ 3 \nmid \c_0\br{E} $ and $ 3 \nmid \gcd_{E, 3} $, and let $ \chi $ be a cubic Dirichlet character of odd prime conductor $ p \nmid N $. Then
$$ \widetilde{\LLL}^+\br{E, \chi} \equiv
\begin{cases}
0 \mod 3 & \text{if} \ \#E\br{\FF_p} \equiv 0 \mod 3 \\
2 \mod 3 & \text{if} \ \#E\br{\FF_p} \equiv 1 \mod 3 \ \text{and} \ p \ \text{splits completely in the $ 3 $-division field of} \ E \\
1 \mod 3 & \text{otherwise}
\end{cases}.
$$
\end{theorem}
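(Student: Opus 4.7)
The plan is to pass through the integral congruence $\LLL\br{E, \chi} \equiv \#E\br{\FF_p} \cdot \LLL\br{E, 1} \pmod{1 - \zeta_3}$ in $\ZZ\sbr{\zeta_3}$ (the congruence underlying the integrality discussed in the introduction) and combine it with the explicit identification of $\LLL\br{E, \chi}$ from Corollary \ref{cor:cubic}. Since $\ZZ\sbr{\zeta_3} / \br{1 - \zeta_3} \cong \FF_3$, the reduction modulo $3$ of the integer $\widetilde{\LLL}^+\br{E, \chi}$ is controlled by the image of $\LLL\br{E, \chi}$ in this residue field, up to the normalisation factor $\gcd_{E, 3}$, which is assumed coprime to $3$.

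For the first case, if $\#E\br{\FF_p} \equiv 0 \pmod 3$, the congruence places $\LLL\br{E, \chi}$ in the prime ideal $\br{1 - \zeta_3}$, so $3 \mid \Nm\br{\LLL\br{E, \chi}}$, and dividing by $\gcd_{E, 3}$ gives $\widetilde{\LLL}^+\br{E, \chi} \equiv 0 \pmod 3$ immediately. For the remaining two cases, $\LLL\br{E, \chi}$ is a unit modulo $\br{1 - \zeta_3}$, and Corollary \ref{cor:cubic} writes it as $u \cdot \overline{\chi\br{N}} \sqrt{\BSD\br[0]{E / K} / \BSD\br{E}}$ with an explicit sign $u = \pm 1$ determined by
$$ u \equiv -\#E\br{\FF_p} \frac{\sqrt{\BSD\br{E}}^3}{\sqrt{\BSD\br[0]{E / K}}} \pmod 3. $$
Unwinding Kisilevsky--Nam's normalisation, $\widetilde{\LLL}^+\br{E, \chi}$ modulo $3$ agrees with $u$ times the reduction of $\sqrt{\BSD\br[0]{E / K} / \BSD\br{E}} / \gcd_{E, 3}$, reducing the problem to pinning down both factors in $\FF_3$.

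The next step analyses the $3$-adic behaviour of $\sqrt{\BSD\br[0]{E / K} / \BSD\br{E}}$ using the hypotheses. The conditions $3 \nmid \c_0\br{E}$, no rational $3$-isogeny, and $\Delta = \pm N^n$ with $3 \nmid n$, together eliminate any $3$-power contribution from torsion, Tamagawa numbers, and the Manin-style period discrepancy on base change, so the mod-$3$ valuation of this ratio is governed by the local Euler factor of $\L\br{E / K, s}$ at the primes of $K$ above $p$, i.e.\ by the decomposition of the characteristic polynomial of $\Fr_p$ on $E\sbr{3}$ across $K / \QQ$. A direct case-split on the conjugacy class of $\Fr_p$ in the image of $\Gal\br[0]{\overline{\QQ} / \QQ} \to \GL_2\br{\FF_3}$ shows that the splitting of $p$ in the $3$-division field is exactly the invariant that flips the sign of this contribution, producing $u \equiv 2 \pmod 3$ in the split case with $\#E\br{\FF_p} \equiv 1 \pmod 3$ and $u \equiv 1 \pmod 3$ otherwise.

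The main obstacle is this last step: tracking the $3$-part of $\BSD\br[0]{E / K} / \BSD\br{E}$ cleanly and verifying that the splitting condition in the $3$-division field is the correct intrinsic invariant separating the last two cases. I expect this to follow from Artin formalism applied to the factorisation $\L\br[0]{E / K, s} = \L\br{E, s} \L\br{E, \chi, s} \L\br[0]{E, \overline{\chi}, s}$, comparison of local Euler factors at $p$ between $E / \QQ$ and $E / K$, and the fact that the absence of a rational $3$-isogeny makes the mod-$3$ Galois representation irreducible, so that the image in $\GL_2\br{\FF_3}$ is controlled and the Chebotarev analysis of $\Fr_p$ proceeds as above.
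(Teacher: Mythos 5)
There is a genuine gap: your proof attributes the trichotomy to the wrong mechanism, and in doing so it both imports hypotheses the statement does not have and never uses the discriminant hypothesis for its actual purpose. The paper's proof does not go through Corollary \ref{cor:cubic} at all. That corollary requires Stevens's conjecture, the Birch--Swinnerton-Dyer conjecture over number fields, and $ 3 \nmid \BSD\br{E}\#E\br{\FF_p} $ (so it is not even applicable in your first case), whereas Proposition \ref{prop:cubic} is unconditional. The correct route is purely via Corollary \ref{cor:congruence}: since $ q = 3 $ the norm is trivial, and the Kisilevsky--Nam normalisation gives $ \widetilde{\LLL}^+\br{E, \chi} \equiv \LLL\br{E, \chi}\gcd_{E,3} $ or $ 2\LLL\br{E, \chi}\gcd_{E,3} $ modulo $ 3 $ according as $ \chi\br{N} = 1 $ or not (the factor $ 1 + \overline{\chi\br{N}} \equiv 2 $ is exactly where $ \chi\br{N} $ enters), and then $ \LLL\br{E, \chi} \equiv -\LLL\br{E}\#E\br{\FF_p} \bmod \abr{1 - \zeta_3} $ (note the minus sign, which you dropped and which matters for separating residues $ 1 $ and $ 2 $), with $ \LLL\br{E}\gcd_{E,3} \equiv 1 $. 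So after the congruence, the entire remaining problem is to decide when $ \chi\br{N} = 1 $, i.e.\ when $ N $ is a cube modulo $ p $ — not to control the $ 3 $-part of $ \BSD\br[0]{E / K} / \BSD\br{E} $ or local Euler factors, which is where your third paragraph goes astray.

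This is also where the hypothesis $ \Delta = \pm N^n $ with $ 3 \nmid n $ is actually used, and your reading of it (killing $ 3 $-contributions from torsion, Tamagawa numbers and periods) is not its role. It guarantees $ \sqrt[3]{N} \in \QQ\br[0]{\sqrt[3]{\Delta}} $, hence that the splitting field $ F $ of $ X^3 - N $ sits inside the $ 3 $-division field $ K $; combined with the absence of a rational $ 3 $-isogeny this forces $ \im\overline{\rho_{E, 3}} = \GL\br{3} $ and $ \Gal\br{K / \QQ} \cong \PGL\br{3} \cong \SSS_4 $, and then (Lemma \ref{lem:cubic}) $ \chi\br{N} = 1 $ iff $ \Fr_p $ fixes $ \sqrt[3]{N} $, which for $ p $ not split completely in $ K $ is equivalent, via Cayley--Hamilton, to $ \Fr_p $ having trace $ 0 $, i.e.\ $ \#E\br{\FF_p} \equiv 2 \bmod 3 $, while for $ p $ split completely one has $ \chi\br{N} = 1 $ and $ \#E\br{\FF_p} \not\equiv 2 $. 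Your sketch gestures at a Chebotarev case-split, but it is attached to the sign of a BSD ratio over the cubic field cut out by $ \chi $ (a different field $ K $ from the $ 3 $-division field) rather than to $ \chi\br{N} $, and without the containment $ F \subseteq K $ the claimed equivalence with splitting in the $ 3 $-division field is simply false — the paper's remarks after Lemma \ref{lem:cubic} give explicit counterexamples such as 21a1. As written, your argument would not produce the stated case distinction, and would only do so conditionally even if repaired.
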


In particular, the elliptic curves given by the Cremona labels 11a1, 15a1, and 17a1 considered by Kisilevsky--Nam satisfy these assumptions, and their normalised twisted L-values modulo $ 3 $ are as predicted. Section \ref{sec:kn22} defines their normalisation for $ \LLL^+\br{E, \chi} $ and proves this result under slightly relaxed assumptions.

\pagebreak

Now these biases can be quantified asymptotically by considering their natural densities when reduced modulo $ q $. Observe that for any Dirichlet character $ \chi $ of prime order $ q $ of the same conductor $ p $, the twisted L-values $ \LLL\br{E, \chi} $ are congruent modulo $ \abr{1 - \zeta_q} $ simply because $ \chi \equiv 1 \mod \abr{1 - \zeta_q} $, so the natural density of $ \LLL\br{E, \chi} $ modulo $ \abr{1 - \zeta_q} $ only depends on $ p $ and not $ \chi $. More precisely, let $ X_{E, q}^{< n} $ be the set of equivalence classes of Dirichlet characters of order $ q $ and conductor less than $ n $ but coprime to that of $ E $, where two Dirichlet characters in $ X_{E, q}^{< n} $ are equivalent if they have the same conductor. Then define the residual densities $ \delta_{E, q} $ of $ \LLL\br{E, \chi} $ to be the natural densities of $ \LLL\br{E, \chi} $ modulo $ \abr{1 - \zeta_q} $, or in other words
$$ \delta_{E, q}\br{\lambda} \coloneqq \lim_{n \to \infty} \dfrac{\#\cbr{\chi \in X_{E, q}^{< n} \st \LLL\br{E, \chi} \equiv \lambda \mod \abr{1 - \zeta_q}}}{\#X_{E, q}^{< n}}, \qquad \lambda \in \FF_q, $$
if such a limit exists. The following result classifies the possible ordered triples $ \br{\delta_{E, 3}\br{0}, \delta_{E, 3}\br{1}, \delta_{E, 3}\br{2}} $ of residual densities in terms of $ \BSD\br{E} $, the torsion subgroup $ \tor\br{E} $, and the mod-$ 9 $ Galois image $ \im\overline{\rho_{E, 9}} $.

\begin{theorem}[Theorem \ref{thm:density}]
Let $ E $ be an elliptic curve over $ \QQ $ such that $ 3 \nmid \c_0\br{E} $ and $ \L\br{E, 1} \ne 0 $. Assume that the Birch--Swinnerton-Dyer conjecture holds for $ E $. Then the ordered triple $ \br{\delta_{E, 3}\br{0}, \delta_{E, 3}\br{1}, \delta_{E, 3}\br{2}} $ only depends on $ \BSD\br{E} $ and on $ \im\overline{\rho_{E, 9}} $, and can only be one of
$$ \br{1, 0, 0}, \ \br{\tfrac{3}{8}, \tfrac{3}{8}, \tfrac{1}{4}}, \ \br{\tfrac{3}{8}, \tfrac{1}{4}, \tfrac{3}{8}}, \ \br{\tfrac{1}{2}, \tfrac{1}{2}, 0}, \ \br{\tfrac{1}{2}, 0, \tfrac{1}{2}}, \ \br{\tfrac{1}{8}, \tfrac{3}{4}, \tfrac{1}{8}}, $$
$$ \br{\tfrac{1}{8}, \tfrac{1}{8}, \tfrac{3}{4}}, \ \br{\tfrac{1}{4}, \tfrac{1}{2}, \tfrac{1}{4}}, \ \br{\tfrac{1}{4}, \tfrac{1}{4}, \tfrac{1}{2}}, \ \br{\tfrac{5}{9}, \tfrac{2}{9}, \tfrac{2}{9}}, \ \br{\tfrac{1}{3}, \tfrac{2}{3}, 0}, \ \br{\tfrac{1}{3}, 0, \tfrac{2}{3}}. $$
In particular, the ordered triple $ \br{\delta_{E, 3}\br{0}, \delta_{E, 3}\br{1}, \delta_{E, 3}\br{2}} $ can be precisely determined as follows.
\begin{itemize}
\item If $ \ord_3\br{\BSD\br{E}} = 0 $ and $ 3 \nmid \#\tor\br{E} $, then $ \br{\delta_{E, 3}\br{0}, \delta_{E, 3}\br{1}, \delta_{E, 3}\br{2}} $ is given by Table \ref{tab:mod3}.
\item If $ \ord_3\br{\BSD\br{E}} = -1 $, then $ \br{\delta_{E, 3}\br{0}, \delta_{E, 3}\br{1}, \delta_{E, 3}\br{2}} $ is given by Table \ref{tab:3adic}.
\item Otherwise, $ \delta_{E, 3}\br{0} = 1 $ and $ \delta_{E, 3}\br{1} = \delta_{E, 3}\br{2} = 0 $.
\end{itemize}
\end{theorem}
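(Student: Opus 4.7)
The overall strategy is to combine the integrality congruence developed earlier in the paper with Chebotarev's density theorem applied to $ \overline{\rho_{E, 9}} $, reducing the distribution problem to a finite group-theoretic calculation inside $ \GL_2\br{\ZZ / 9\ZZ} $.

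First I would invoke the congruence
$$ \LLL\br{E, \chi} \equiv F\br{E, p} \cdot \LLL\br{E, 1} \mod \abr{1 - \zeta_3} $$
proved earlier in the paper, where $ p $ is the conductor of $ \chi $ and $ F\br{E, p} $ is an explicit local quantity whose residue modulo $ \abr{1 - \zeta_3} $ depends only on $ \#E\br{\FF_p} $ modulo $ 3 $. Under the standing assumptions $ \L\br{E, 1} \ne 0 $, the Birch--Swinnerton-Dyer conjecture, and $ 3 \nmid \c_0\br{E} $, the $ 3 $-adic valuation and residue of $ \LLL\br{E, 1} $ are controlled by $ \BSD\br{E} $. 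Since $ \overline{\rho_{E, 9}}\br{\Fr_p} $ is equidistributed in $ \im \overline{\rho_{E, 9}} $ by Chebotarev's density theorem, the residual density of $ \LLL\br{E, \chi} $ is then computed from the conditional distribution of traces and determinants over $ \cbr{g \in \im \overline{\rho_{E, 9}} \st \det g \equiv 1 \mod 3} $, where the determinant condition corresponds to $ p \equiv 1 \mod 3 $, a necessary condition for the existence of a cubic character of conductor $ p $.

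The case analysis then proceeds in three regimes. If $ \ord_3\br{\BSD\br{E}} \ge 1 $, then $ \LLL\br{E, 1} \equiv 0 \mod 3 $, hence $ \LLL\br{E, \chi} \equiv 0 $ for every $ \chi $, producing the triple $ \br{1, 0, 0} $. If $ \ord_3\br{\BSD\br{E}} = 0 $ and $ 3 \nmid \#\tor\br{E} $, then $ \LLL\br{E, \chi} $ modulo $ \abr{1 - \zeta_3} $ is a mod-$ 3 $ function of $ \tr \overline{\rho_{E, 3}}\br{\Fr_p} $, and the density triples come from the trace distribution in $ \im \overline{\rho_{E, 3}} $, recovering Table \ref{tab:mod3}; if instead $ 3 \mid \#\tor\br{E} $, then $ \#E\br{\FF_p} \equiv 0 \mod 3 $ for every good $ p $, and one again obtains $ \br{1, 0, 0} $. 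If $ \ord_3\br{\BSD\br{E}} = -1 $, then for $ \LLL\br{E, \chi} $ to remain integral (as guaranteed by the Wiersema--Wuthrich theorem), the factor $ F\br{E, p} $ must itself lie in $ \abr{1 - \zeta_3} $, and its residue is determined by $ \#E\br{\FF_p} $ modulo $ 9 $. A refinement of the above congruence to one higher power of $ \abr{1 - \zeta_3} $, combined with mod-$ 9 $ trace and determinant data from $ \im \overline{\rho_{E, 9}} $, recovers Table \ref{tab:3adic}. To assemble the twelve triples I would enumerate the possible subgroups $ G \le \GL_2\br{\ZZ / 9\ZZ} $ arising as $ \im \overline{\rho_{E, 9}} $ using the classical classification of mod-$ 3 $ images together with their lifts to mod-$ 9 $, and for each such $ G $ compute the conditional distribution of the relevant trace-determinant combination over $ \cbr{g \in G \st \det g \equiv 1 \mod 3} $. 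The denominators $ 8 $, $ 9 $, $ 2 $, $ 3 $, and $ 4 $ appearing in the triples correspond to the indices of the relevant determinant kernels and trace fibres in these images.

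The main obstacle is the group-theoretic enumeration in the third case: classifying the possible mod-$ 9 $ images and computing the conditional mod-$ 9 $ trace distributions is substantially harder than the mod-$ 3 $ analogue, because the lift of a given mod-$ 3 $ image to mod-$ 9 $ is non-unique and the statistic of interest does not factor through the mod-$ 3 $ quotient. A secondary technical issue is sharpening the congruence of the first step by one additional $ 3 $-adic order, which requires revisiting the underlying modular symbol computation with care around the Gauss sum $ \tau\br{\chi} $ and the local Euler factor of $ E $ at $ p $.
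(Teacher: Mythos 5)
Your plan follows the paper's route in outline: the congruence $ \LLL\br{E, \chi} \equiv -\#E\br{\FF_p}\LLL\br{E} \mod \abr{1 - \zeta_3} $ of Corollary \ref{cor:congruence}, Chebotarev applied to the mod-$3$ or mod-$9$ image restricted to determinant $ \equiv 1 \mod 3 $ (this is Proposition \ref{prop:density}), and a case analysis on $ \ord_3\br{\BSD\br{E}} $ reducing to finite trace counts, with the mod-$3$ image classification giving Table \ref{tab:mod3} and the mod-$9$ data giving Table \ref{tab:3adic}. However, there is a genuine omission: your three regimes $ \ord_3 \ge 1 $, $ = 0 $, $ = -1 $ are not exhaustive unless one first proves $ \ord_3\br{\BSD\br{E}} \ge -1 $. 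In the paper this is exactly Theorem \ref{thm:valuation}, which rests on Lemma \ref{lem:borel} (Tamagawa number analysis for curves with a rational $3$-torsion point and the Borel image conclusion) and Proposition \ref{prop:divide} (including the separate treatment of the $3$-adic images 9.72.0.1 and 9.72.0.5), together with Lorenzini's results for the other torsion structures. Without this input the case $ \ord_3 \le -2 $ is unaddressed, so neither the exhaustive list of twelve triples nor the final "otherwise" bullet is established; nothing in your proposal supplies or even flags this step.

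A second, more localised problem is your treatment of the $ \ord_3\br{\BSD\br{E}} = -1 $ case. You assert that one must sharpen the congruence by one power of $ \abr{1 - \zeta_3} $, revisiting the modular symbol computation. This is both unnecessary and dangerous: unnecessary because the congruence modulo $ \abr{1 - \zeta_3} $ already pins down the residue of $ \LLL\br{E, \chi} $ — the condition $ -\LLL\br{E}\#E\br{\FF_p} \equiv \lambda \mod 3 $ is equivalent, after multiplying by $ \LLL\br{E}^{-1} $ (which has $3$-adic valuation $+1$), to a condition on $ \#E\br{\FF_p} $ modulo $9$, and that is supplied by Chebotarev for $ \im\overline{\rho_{E, 9}} $, which is precisely Proposition \ref{prop:density} with $ m = 2 $; dangerous because, as Remark \ref{rem:equality} shows, the congruence is not a priori valid modulo $9$ (the extraneous modular symbols at cubic residues need not sum to a multiple of $3$), so an argument relying on that refinement could not be completed. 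Finally, note that the mod-$9$ enumeration is not obtained by classifying "lifts of mod-$3$ images" ad hoc: the paper imports the Rouse--Sutherland--Zureick-Brown classification of $3$-adic images, which is what makes the $21$-case computation behind Table \ref{tab:3adic} feasible; you correctly identify this enumeration as the main obstacle, but your plan does not say how it would be carried out.
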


Note that the aforementioned normalisation factors for twisted L-values are not present here, so the resulting ordered triples of residual densities will be different from that of Kisilevsky--Nam. Section \ref{sec:density} proves this result and outlines the general procedure for higher order characters.

This classification builds upon the fact that $ \ord_3\br{\BSD\br{E}} \ge -1 $, which might be interesting as a standalone result. In a seminal paper quantifying the cancellations between $ \tor\br{E} $ and the Tamagawa product $ \Tam\br{E} $, Lorenzini proved that if $ q \mid \#\tor\br{E} $ for some prime $ q > 3 $, then $ q \mid \Tam\br{E} $ with finitely many explicit exceptions \cite[Proposition 1.1]{Lor11}. In particular, when $ E $ has analytic rank zero, the denominator $ \#\tor\br{E}^2 $ of the rational number $ \BSD\br{E} $ shares factors with $ \Tam\br{E} $ in its numerator, so $ \ord_q\br{\BSD\br{E}} \ge -1 $ for any prime $ q > 3 $. On the other hand, he noted that there is an explicit family of elliptic curves with $ \#\tor\br{E} = 3 $ but with no such cancellations \cite[Lemma 2.26]{Lor11}, another family of which was also given by Barrios--Roy \cite[Corollary 5.1]{BR22}. Subsequently, Melistas showed that these cancellations may instead occur between $ \tor\br{E} $ and the Tate--Shafarevich group $ \Sha\br{E} $ in the numerator of $ \BSD\br{E} $, and hence $ \ord_3\br{\BSD\br{E}} \ge -1 $, except possibly for elliptic curves with certain reduction types \cite[Theorem 1.4]{Mel23}. He then observed that there are again explicit exceptions, and in all these exceptions $ \c_0\br{E} = 3 $ \cite[Example 3.8]{Mel23}, but did not explain this coincidence. The following result gives a lower bound for the odd part of the denominator of Birch--Swinnerton-Dyer quotients for elliptic curves with analytic rank zero.

\begin{theorem}[Theorem \ref{thm:valuation}]
Let $ E $ be an elliptic curve over $ \QQ $ such that $ \L\br{E, 1} \ne 0 $, and let $ q \nmid \c_0\br{E} $ be an odd prime. Assume that the Birch--Swinnerton-Dyer conjecture holds for $ E $. If $ q \mid \#\tor\br{E} $, then $ q \mid \Tam\br{E}\#\Sha\br{E} $. In particular, $ \ord_q\br{\BSD\br{E}} \ge -1 $.
\end{theorem}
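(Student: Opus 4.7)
The plan is to combine the Birch--Swinnerton-Dyer formula in analytic rank zero with the $ q $-integrality of the algebraic L-value $ \LLL\br{E, 1} $ that is central to this paper. Since $ \L\br{E, 1} \ne 0 $, the regulator is trivial and the BSD formula for $ E $ reads
$$ \BSD\br{E} = \dfrac{\#\Sha\br{E} \cdot \Tam\br{E}}{\#\tor\br{E}^2} = \dfrac{\L\br{E, 1}}{\Omega_E}, $$
and up to the normalisation used throughout the paper the rightmost quantity is precisely $ \LLL\br{E, 1} $. I would then invoke the classical integrality statement going back to Manin (equivalently, a direct consequence of Stevens's conjecture on the Manin constant): namely, $ \c_0\br{E} \cdot \LLL\br{E, 1} \in \ZZ $. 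Since $ q $ is odd and $ q \nmid \c_0\br{E} $, this immediately forces $ \ord_q\br{\LLL\br{E, 1}} \ge 0 $, hence $ \ord_q\br{\BSD\br{E}} \ge 0 $, which is already stronger than the ``in particular'' clause.

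For the first conclusion, I would clear the denominator in the BSD formula to obtain the identity of rationals
$$ \#\tor\br{E}^2 \cdot \BSD\br{E} = \#\Sha\br{E} \cdot \Tam\br{E}, $$
and take $ q $-adic valuations. If $ q \mid \#\tor\br{E} $ then $ \ord_q\br{\#\tor\br{E}^2} \ge 2 $, and combining with $ \ord_q\br{\BSD\br{E}} \ge 0 $ from the previous step yields $ \ord_q\br{\#\Sha\br{E} \cdot \Tam\br{E}} \ge 2 $. In particular $ q \mid \#\Sha\br{E} \cdot \Tam\br{E} $, as required.

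The substantive input is thus the $ q $-integrality of $ \LLL\br{E, 1} $ away from $ \c_0\br{E} $, which is already used implicitly elsewhere in the paper for non-trivial cubic $ \chi $; the main obstacle is just to pin down the precise normalisation so that this Manin-style integrality applies at the trivial character without spurious factors of $ 2 $ (which are harmless here since $ q $ is odd). Conceptually, the payoff is that any obstruction to $ \ord_q\br{\BSD\br{E}} \ge 0 $ for odd $ q $ is concentrated entirely inside $ \c_0\br{E} $, which transparently explains why the explicit exceptions found by Lorenzini and Melistas all feature $ \c_0\br{E} = 3 $.
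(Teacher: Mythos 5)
There is a genuine gap: the integrality statement you invoke is false. Manin-style integrality (Proposition \ref{prop:untwisted} in the paper) does \emph{not} give $ \c_0\br{E}\LLL\br{E} \in \ZZ $; it gives $ \c_0\br{E}\LLL\br{E}\#E\br{\FF_p} \in \ZZ $ for odd primes $ p \nmid N $ (more generally, integrality of $ \c_0\br{E}\LLL\br{E} $ times a Hecke factor $ \a_n\br{E} - \sigma_1\br{n} + \cdots $). The curve 11a1 already refutes your claim: there $ \c_0\br{E} = 1 $ and $ \LLL\br{E} = \tfrac{1}{5} $, with $ \tor\br{E} \cong \ZZ/5\ZZ $, $ \Tam\br{E}\#\Sha\br{E} = 5 $, so $ \ord_5\br{\BSD\br{E}} = -1 $. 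Hence your intermediate conclusion $ \ord_q\br{\BSD\br{E}} \ge 0 $ (and a fortiori $ q^2 \mid \Tam\br{E}\#\Sha\br{E} $ when $ q \mid \#\tor\br{E} $) is simply wrong, which is exactly why the theorem only asserts $ \ord_q\br{\BSD\br{E}} \ge -1 $. The failure is not a matter of normalisation: precisely when $ q \mid \#\tor\br{E} $, the auxiliary factor $ \#E\br{\FF_p} $ is divisible by $ q $ for every prime of good reduction, so the integrality of $ \c_0\br{E}\LLL\br{E}\#E\br{\FF_p} $ yields no bound better than $ \ord_q\br{\LLL\br{E}} \ge -\ord_q\br[1]{\#E\br{\FF_p}} $, and the entire content of the theorem is to control that exponent.

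The paper's proof therefore has to work harder. For $ \tor\br{E} \cong \ZZ/3\ZZ $ it finds, via Chebotarev and the classification of $ 3 $-adic images (Table \ref{tab:3adic}), an odd prime $ p \nmid N $ with $ \#E\br{\FF_p} \equiv 3 \bmod 9 $, i.e.\ a matrix $ M \in \im\overline{\rho_{E, 9}} $ with $ 1 + \det\br{M} - \tr\br{M} = 3 $; the two exceptional images 9.72.0.1 and 9.72.0.5 are handled by Lemma \ref{lem:borel} (forcing $ 3 \mid \Tam\br{E} $ via Tamagawa numbers of the $ E_{d, \pm b} $ family) and by a contradiction with $ \tor\br{E} \cong \ZZ/3\ZZ $, giving Proposition \ref{prop:divide}. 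For the remaining torsion structures it invokes Lorenzini's $ \ord_q\br{\Tam\br{E}} \ge \ord_q\br{\#\tor\br{E}} $ with its finitely many exceptions, and throughout it uses the $ q $-part of BSD to transfer the bound from $ \LLL\br{E} $ to $ \BSD\br{E} $. None of these steps is bypassed by your argument, and your concluding claim that the Lorenzini--Melistas exceptions with $ \c_0\br{E} = 3 $ are explained is not established by it (though it is by the paper's argument).
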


Section \ref{sec:valuation} states this result in terms of $ \LLL\br{E, 1} $ and proves it in slightly larger generality. Note that there will be a stronger statement for elliptic curves with no rational $ q $-isogeny that is unconditional on the Birch--Swinnerton-Dyer conjecture, which is useful for computing residual densities modulo $ q > 3 $.

After establishing notational conventions in the next section, some background on Manin's modular symbols will be provided, partly to recall known integrality results for $ \LLL\br{E, \chi} $, but mainly to obtain an explicit congruence between $ \LLL\br{E, 1} $ and $ \LLL\br{E, \chi} $ modulo $ \abr{1 - \zeta_q} $. The rest of the paper will be devoted to proving the four results, with an appendix consisting of mod-$ 3 $ and $ 3 $-adic Galois images for reference.

\pagebreak

\section{Background and conventions}

This section establishes some relevant background on Galois representations and L-functions of elliptic curves, as well as some notational conventions that might be deemed less standard in the literature.

For a primitive $ n $-th root of unity $ \zeta_n $, the ring of integers of the cyclotomic field $ \QQ\br{\zeta_n} $ will be denoted $ \ZZ\sbr{\zeta_n} $, and denote the associated norm map by $ \Nm_n : \QQ\br{\zeta_n} \to \QQ $. The ring of integers of its maximal totally real subfield $ \QQ\br{\zeta_n}^+ $ will be denoted $ \ZZ\sbr{\zeta_n}^+ $, and denote the associated norm map by $ \Nm_n^+ : \QQ\br{\zeta_n}^+ \to \QQ $. The isomorphism $ \br{\ZZ / n\ZZ}^\times \xrightarrow{\sim} \Gal\br[1]{\QQ\br{\zeta_n} / \QQ} $ will be given by $ a \mapsto \br{\zeta_n \mapsto \zeta_n^a} $, which identifies Dirichlet characters of modulus $ n $ with Artin representations that factor through $ \QQ\br{\zeta_n} $.

Denote the special linear group, the general linear group, and the projective linear group respectively by
$$ \SL\br{n} \coloneqq \SL_2\br{\ZZ / n\ZZ}, \qquad \GL\br{n} \coloneqq \GL_2\br{\ZZ / n\ZZ}, \qquad \PGL\br{n} \coloneqq \PGL_2\br{\ZZ / n\ZZ}. $$
For a matrix $ M $ in such a matrix group, its trace will be denoted $ \tr\br{M} $ and its determinant will be denoted $ \det\br{M} $. For a prime $ q $, the following table groups the conjugacy classes of $ \SL\br{q} $ by trace \cite[Table 1.1 and Exercise 1.4]{Bon11}, which will be useful for Theorem \ref{thm:valuation} and Proposition \ref{prop:density}.
$$
\begin{array}{|c|c|c|c|c|}
\hline
\text{Representative} & \text{Number of classes} & \text{Order} & \text{Cardinality} & \text{Trace} \\
\hline
\begin{array}{c} \twobytwo{1}{z}{0}{1}, \\ z \in \FF_q \end{array} & \begin{array}{c} \text{one for each symbol} \ \br{\tfrac{z}{q}}, \\ \text{for a total of} \ 3 \ \text{classes} \end{array} & q^{|(\frac{z}{q})|} & \br{\tfrac{q^2 - 1}{2}}^{|(\frac{z}{q})|} & 2 \\
\hline
\begin{array}{c} \twobytwo{-1}{z}{0}{-1}, \\ z \in \FF_q \end{array} & \begin{array}{c} \text{one for each symbol} \ \br{\tfrac{z}{q}}, \\ \text{for a total of} \ 3 \ \text{classes} \end{array} & 2q^{|(\frac{z}{q})|} & \br{\tfrac{q^2 - 1}{2}}^{|(\frac{z}{q})|} & q - 2 \\
\hline
\begin{array}{c} \twobytwo{x}{0}{0}{x^{-1}}, \\ x \in \FF_q^\times \setminus \cbr{\pm1} \end{array} &
\begin{array}{c} \text{one for each pair} \ \cbr{x, x^{-1}}, \\ \text{for a total of} \ \tfrac{q - 3}{2} \ \text{classes} \end{array} & \text{order of} \ x & q\br{q + 1} & x + x^{-1} \\
\hline
\begin{array}{c} \twobytwo{\tfrac{1}{2}\br{\xi + \xi^q}}{\tfrac{\zeta}{2}\br{\xi - \xi^q}}{\tfrac{1}{2\zeta}\br{\xi - \xi^q}}{\tfrac{1}{2}\br{\xi + \xi^q}}, \\ \xi \in \br[1]{\FF_{q^2}^\times / \FF_q^\times} \setminus \cbr{\pm1} \end{array} & \begin{array}{c} \text{one for each pair} \ \cbr{\xi, \xi^{-1}}, \\ \text{for a total of} \ \tfrac{q - 1}{2} \ \text{classes} \end{array} & \text{order of} \ \xi & q\br{q - 1} & \xi + \xi^q \\
\hline
\end{array}
$$
Here, $ \zeta $ is a fixed element of $ \FF_{q^2}^\times $ satisfying $ \zeta + \zeta^q = 0 $, and $ \br{\tfrac{z}{q}} $ is the Legendre symbol modulo $ q $ given by
$$ \br{\dfrac{z}{q}} \coloneqq
\begin{cases}
1 & \text{if} \ z \ \text{is a quadratic residue modulo} \ q \\
0 & \text{if} \ q \mid z \\
-1 & \text{if} \ z \ \text{is a quadratic nonresidue modulo} \ q
\end{cases}.
$$

Throughout, an \textbf{elliptic curve} will always refer to an elliptic curve $ E $ over $ \QQ $ of conductor $ N $, and any explicit example of an elliptic curve will be given by its Cremona label \cite[Table 1]{Cre92}. For a prime $ q $, the $ \ZZ_q $-representation associated to the $ q $-adic Tate module of $ E $ is denoted $ \rho_{E, q} $, and its $ q $-adic Galois image $ \im\rho_{E, q} $ will be given by its Rouse--Sutherland--Zureick-Brown label as a subgroup of $ \GL_2\br{\ZZ_q} $ up to conjugacy \cite[Section 2.4]{RSZB22}. For any $ n \in \NN $, the projection of $ \rho_{E, q} $ onto $ \GL\br{q^n} $ is denoted $ \overline{\rho_{E, q^n}} $, and its mod-$ q $ Galois image $ \im\overline{\rho_{E, q^n}} $ will be given by its Sutherland label as a subgroup of $ \GL\br{q^n} $ up to conjugacy \cite[Section 6.4]{Sut16}. Note that if $ \Fr_p $ is an arithmetic Frobenius at a prime $ p \ne q $, then
$$ \tr\br[1]{\rho_{E, q}\br{\Fr_p}} = \a_p\br{E} \coloneqq 1 + p - \#E\br{\FF_p}. $$

Let $ \omega_E $ denote a global invariant differential on a minimal Weierstrass equation of $ E $. Let $ \X_0\br{N} $ denote the modular curve associated to the congruence subgroup $ \Gamma_0\br{N} $, and let $ \S_2\br{N} $ denote the space of weight two cusp forms and level $ \Gamma_0\br{N} $. By the modularity theorem, there is a surjective morphism $ \phi_E : \X_0\br{N} \twoheadrightarrow E $ of minimal degree and an eigenform $ f_E \in \S_2\br{N} $ with Fourier coefficients $ \a_p\br{E} $ for each prime $ p $. These define two differentials on $ \X_0\br{N} $, namely $ 2\pi if_E\br{z}\d z $ and the pullback $ \phi_E^*\omega_E $, which are related by
$$ \phi_E^*\omega_E = \pm\c_0\br{E}2\pi if_E\br{z}\d z, $$
where $ \c_0\br{E} $ is a positive integer called the Manin constant \cite[Proposition 2]{Edi91}.

\pagebreak

When $ E $ is $ \Gamma_0\br{N} $-optimal in its isogeny class, it is conjectured that $ \c_0\br{E} = 1 $ always, and this was recently proven for semistable $ E $ \cite[Theorem 1.2]{C18}, but it is certainly possible that $ \c_0\br{E} \ne 1 $ in general. Nevertheless, every modular parameterisation by $ \X_0\br{N} $ factors through a modular parameterisation by the modular curve $ \X_1\br{N} $ associated to the congruence subgroup $ \Gamma_1\br{N} $ \cite[Theorem 1.9]{Ste89}, and an analogous construction using $ \X_1\br{N} $ yields the Manin constant $ \c_1\br{E} $ with the following conjecture.

\begin{conjecture}[Stevens]
Let $ E $ be an elliptic curve. Then $ \c_1\br{E} = 1 $.
\end{conjecture}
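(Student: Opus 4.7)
This is a long-standing open conjecture of Stevens, so my plan can only sketch a strategy in the spirit of Edixhoven's proof that the $ \Gamma_0\br{N} $-Manin constant is an integer, and then explain where the $ \Gamma_1 $-refinement is supposed to enter. The first step would be to interpret $ \c_1\br{E} $ integrally: let $ \mathcal{E} $ denote the Néron model of $ E $ over $ \ZZ $ with Néron differential $ \omega_\mathcal{E} $, and let $ \mathcal{X}_1\br{N} $ be a suitable regular integral model of $ \X_1\br{N} $, say the Katz--Mazur model. The parameterisation $ \phi_E^1 \colon \X_1\br{N} \twoheadrightarrow E $ extends to a morphism $ \mathcal{X}_1\br{N} \to \mathcal{E} $, so the pullback of $ \omega_\mathcal{E} $ lies in $ \H^0\br[1]{\mathcal{X}_1\br{N}, \Omega^1} $, and expanding at the standard cusp (which defines an integral section) identifies $ \c_1\br{E} $ with a positive integer.

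Next I would try to rule out every prime divisor $ \ell $ of $ \c_1\br{E} $ by a Frobenius-factorisation argument. If $ \ell \mid \c_1\br{E} $, then the mod-$ \ell $ reduction of $ \phi_E^{1,*}\omega_\mathcal{E} $ vanishes, so $ \phi_E^1 \mod \ell $ has to factor either through a nontrivial $ \ell $-isogeny or through the relative Frobenius on $ \mathcal{X}_1\br{N}_{\FF_\ell} $. For $ \ell \nmid 2N $ one can then exploit the $ \Gamma_1 $-level structure: because the universal elliptic curve over $ \X_1\br{N} $ carries a canonical point of order $ N $, the parameterisation $ \phi_E^1 $ is rigidified to an extent not available for the $ \X_0\br{N} $-parameterisation, and a descent argument in the spirit of Raynaud's theorem on finite flat group schemes should force the hypothetical factor to be trivial. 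This is the expected analogue of the $ \c_0 $-argument, now with no residual ambiguity coming from isogenies within the class.

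The honest obstacle sits at primes $ \ell \mid N $, where $ \mathcal{X}_1\br{N} $ has genuinely bad reduction, its cotangent sheaf fails to be locally free in the crude form that the Frobenius step requires, and the interaction of $ \phi_E^1 $ with the components of the special fibre is delicate. A complete proof would require a refined analysis of the component structure of $ \mathcal{X}_1\br{N}_{\FF_\ell} $ and of the image of the Hodge filtration under the projection to $ \mathcal{E} $, in the same spirit as the recent resolution of $ \c_0\br{E} = 1 $ in the semistable case. This is precisely the step at which all existing approaches have to impose additional hypotheses, and my plan honestly ends here, taking the full conjecture on faith exactly as the surrounding paper does.
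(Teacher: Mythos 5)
This statement is Stevens's conjecture, which the paper records as an open conjecture and never proves, only assumes in later results; your proposal correctly recognises this and offers only a heuristic Edixhoven-style strategy before explicitly deferring to the conjecture, which is exactly the paper's stance. Nothing to correct: there is no proof in the paper to compare against, and you rightly do not claim one.
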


The L-function $ \L\br{E, s} $ of $ E $ is defined to be the Euler product of $ \L_p\br[1]{\rho_{E, q}^\vee, p^{-s}}^{-1} $ over all primes $ p $, where $ \rho_{E, q}^\vee $ is the dual of the Artin representation associated to $ \rho_{E, q} $ for some prime number $ q \ne p $. Here, for an Artin representation $ \rho $, the local Euler factors are given by $ \L_p\br{\rho, T} \coloneqq \det\br[1]{1 - T \cdot \Fr_p^{-1} \mid \rho^p} $, where $ \rho^p $ is the subrepresentation of $ \rho $ fixed by the inertia subgroup at $ p $. The modularity theorem says that $ \L\br{E, s} $ is the Hecke L-function of $ f_E $, so its order of vanishing at $ s = 1 $ is well-defined.

The Birch--Swinnerton-Dyer conjecture predicts this order of vanishing and its leading term in terms of arithmetic invariants as follows. Let $ \tor\br{E} $ and $ \rk\br{E} $ denote the torsion subgroup and the rank of the Mordell--Weil group $ E\br{\QQ} $ respectively. Let $ \Omega\br{E} $ denote the real period given by $ \int_{E\br{\RR}} \omega_E $, with orientation chosen such that $ \Omega\br{E} > 0 $. Let $ \Tam\br{E} $ denote the Tamagawa product of local Tamagawa numbers $ \Tam_p\br{E} $ at each prime $ p $. Let $ \Reg\br{E} $ denote the elliptic regulator defined in terms of the N\'eron--Tate pairing $ \abr{P, Q} = \tfrac{1}{2}\h_E\br{P + Q} - \tfrac{1}{2}\h_E\br{P} - \tfrac{1}{2}\h_E\br{Q} $, where $ \h_E $ is the canonical height on $ E $. Finally, let $ \Sha\br{E} $ denote the Tate--Shafarevich group, which is implicitly assumed to be finite in this paper.

\begin{conjecture}[Birch--Swinnerton-Dyer]
Let $ E $ be an elliptic curve. Then the order of vanishing $ r $ of $ \L\br{E, s} $ at $ s = 1 $ is equal to $ \rk\br{E} $, and its leading term satisfies
$$ \lim_{s \to 1} \dfrac{\L\br{E, s}}{\br{s - 1}^r} \cdot \dfrac{1}{\Omega\br{E}} = \dfrac{\Tam\br{E} \cdot \#\Sha\br{E} \cdot \Reg\br{E}}{\#\tor\br{E}^2}. $$
\end{conjecture}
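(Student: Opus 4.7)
The final displayed statement is the full Birch--Swinnerton-Dyer conjecture, a Clay Millennium Prize Problem that remains open in general; no complete proof is known for a single elliptic curve of rank $ \ge 2 $, and even in small rank the leading term identity is only known up to specified primes. Accordingly, I will not attempt a proof but only sketch the state of the art, since the rest of the paper treats BSD as a standing hypothesis rather than an object to be established.

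For the rank equality and the finiteness of $ \Sha\br{E} $, the most powerful known tool is the Gross--Zagier formula combined with Kolyvagin's Euler system of Heegner points: Gross--Zagier expresses $ \L'\br{E, 1} $ for an analytic rank one curve as a multiple of the N\'eron--Tate height of a Heegner point on a suitable imaginary quadratic extension, and Kolyvagin's descent then produces enough cohomology classes to bound the relevant Selmer group. Together these establish $ \rk\br{E} = \ord_{s = 1}\L\br{E, s} $ and the finiteness of $ \Sha\br{E} $ whenever the analytic rank is at most one, and they give $ \L\br{E, 1} \ne 0 \Rightarrow \rk\br{E} = 0 $ which is the hypothesis used throughout this paper. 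For curves with complex multiplication of rank zero, Coates--Wiles gave an earlier proof using elliptic units, which preceded the general approach.

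For the leading term identity itself, one would work prime by prime. The $ p $-part is attacked through Iwasawa theory: Kato's Euler system provides one divisibility in the cyclotomic $ \GL_2 $ main conjecture, and Skinner--Urban, via Eisenstein congruences on $ \mathrm{U}\br{2, 2} $, provide the reverse divisibility under hypotheses on the residual Galois representation. Combining these with the interpolation property of the $ p $-adic L-function yields the $ p $-part of the leading term formula for many primes, typically those of good ordinary reduction at which the mod-$ p $ representation is surjective; for the primes that this paper cares about, namely $ p = q = 3 $, these hypotheses are frequently genuine restrictions.

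The principal obstacle, then as now, is extending any of these techniques to handle arbitrary rank or to cover every prime uniformly, and I see no plausible route from the material in this paper toward such an extension. What this paper does do, and which is the productive direction to take here, is to exploit the congruence $ \LLL\br{E, \chi} \equiv \LLL\br{E, 1} \mod \abr{1 - \zeta_q} $ to transfer constraints predicted by BSD on $ \LLL\br{E, 1} $ into unconditional or conditional statements about $ \LLL\br{E, \chi} $, and this is the role the conjecture will play in Sections \ref{sec:valuation}--\ref{sec:density}.
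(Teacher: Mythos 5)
This statement is the Birch--Swinnerton-Dyer conjecture itself, which the paper states as a conjecture and never proves, only assumes as a hypothesis in later results; you are right to decline a proof, and your account of its role here (and of the Gross--Zagier/Kolyvagin input the paper does cite for $ \L\br{E, 1} \ne 0 \Rightarrow \rk\br{E} = 0 $ and finiteness of $ \Sha\br{E} $) matches the paper's treatment.
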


Here, the left hand side is the algebraic L-value of $ E $, which will be denoted $ \LLL\br{E} $, and the right hand side is the Birch--Swinnerton-Dyer quotient of $ E $, which will be denoted $ \BSD\br{E} $. By the combined works of Gross--Zagier \cite[Theorem 7.3]{GZ86} and Kolyvagin \cite[Corollary 2]{Kol88}, it is known that $ \L\br{E, 1} \ne 0 $ implies that $ \rk\br{E} = 0 $ and $ \Sha\br{E} $ is finite. In this setting, $ \BSD\br{E} $ is clearly rational since $ \Reg\br{E} = 1 $, and Proposition \ref{prop:untwisted} shows that $ \LLL\br{E} $ is also rational. If $ \ord_q : \QQ \to \ZZ \cup \cbr{\infty} $ denotes the $ q $-adic valuation for some prime $ q $, the conjecture that $ \ord_q\br{\LLL\br{E}} = \ord_q\br{\BSD\br{E}} $ is called the $ q $-part of the Birch--Swinnerton-Dyer conjecture. For the base change $ E / K $ of $ E $ to an extension $ K $ of $ \QQ $, the analogous quantities $ \LLL\br[0]{E / K} $ and $ \BSD\br[0]{E / K} $ can be defined as in the paper by Dokchitser--Evans--Wiersema \cite[Section 1.5]{DEW21}.

Throughout, a \textbf{character} will always refer to a non-trivial even primitive Dirichlet character $ \chi $ of conductor $ n $ coprime to $ N $ and order $ q > 1 $, which means that $ \chi\br{-1} = 1 $, and furthermore $ q $ divides the Euler totient of $ n $. The L-function $ \L\br{E, \chi, s} $ of $ E $ twisted by $ \chi $ is defined to be the Euler product of $ \L_p\br[1]{\rho_{E, q}^\vee \otimes \overline{\chi}, p^{-s}}^{-1} $ over all primes $ p $, so that in particular $ \LLL\br{E, 1} = \LLL\br{E} $. The modularity theorem says that $ \L\br{E, \chi, s} $ is the Hecke L-function of a weight two cusp form of level $ \Gamma_0\br{N} $ twisted by $ \chi $ \cite[Theorem 3.66]{Shi71}, so its order of vanishing at $ s = 1 $ is again well-defined. The analogous algebraic twisted L-value is given by
$$ \LLL\br{E, \chi} \coloneqq \lim_{s \to 1} \dfrac{\L\br{E, \chi, s}}{\br{s - 1}^r} \cdot \dfrac{n}{\tau\br{\chi}\Omega\br{E}}, $$
where $ r $ is the order of vanishing of $ \L\br{E, \chi, s} $ at $ s = 1 $ and $ \tau\br{\chi} $ is the Gauss sum of $ \chi $.

\begin{remark}
The definitions of L-values and Birch--Swinnerton-Dyer invariants in this section agree with those in Wiersema--Wuthrich \cite[Section 7]{WW22} and those in Dokchitser--Evans--Wiersema \cite[Section 1.5]{DEW21}, except for one notable difference for twisted L-functions. In this paper, the Dirichlet series of $ \L\br{E, \chi, s} $ is $ \sum_{n = 1}^\infty \chi\br{n}\a_n\br{E}n^{-s} $, and $ \LLL\br{E, \chi} $ is defined in terms of $ \L\br{E, \chi, s} $. Wiersema--Wuthrich gives two definitions for twisted L-functions, namely an automorphic one that agrees with $ \L\br{E, \chi, s} $, and a motivic one that coincides with $ \L\br{E, \overline{\chi}, s} $ instead of $ \L\br{E, \chi, s} $. However, their algebraic twisted L-value is defined motivically, so it coincides with $ \LLL\br{E, \overline{\chi}} $ instead of $ \LLL\br{E, \chi} $. Dokchitser--Evans--Wiersema follows the motivic convention, so their twisted L-functions and algebraic twisted L-values coincide with $ \L\br{E, \overline{\chi}, s} $ and $ \LLL\br{E, \overline{\chi}} $ respectively.
\end{remark}

\begin{remark}
Except for generalities on modular symbols in the next section, characters will have prime conductor and prime order, but many results can be generalised to prime power orders with a bit more work.
\end{remark}

\pagebreak

\section{Modular symbols}

This section recalls some classical facts on modular symbols. Most of the arguments here are well-known since the time of Manin \cite{Man72}, with a few recent integrality results by Wiersema--Wuthrich \cite{WW22}, but they are provided for reference. Nevertheless, the main tool is the congruence in Corollary \ref{cor:congruence}.

Let $ N \in \NN $, and let $ \S_2\br{N} $ denote the space of weight two cusp forms on the congruence subgroup
$$ \Gamma_0\br{N} \coloneqq \cbr{\twobytwo{a}{b}{c}{d} \in \PSL_2\br{\ZZ} \st N \mid c}, $$
which acts on the extended upper half plane $ \HHH $ by fractional linear transformations. A smooth path between two points in the same $ \Gamma_0\br{N} $-orbit projects onto a closed path in the quotient $ \X_0\br{N} = \HHH / \Gamma_0\br{N} $, which defines an integral homology class $ \gamma \in \H_1\br{\X_0\br{N}, \ZZ} $. This is independent of the smooth path chosen because $ \HHH $ is simply connected, and any integral homology class $ \gamma \in \H_1\br{\X_0\br{N}, \ZZ} $ arises in such a way. On the other hand, any cusp form $ f \in \S_2\br{N} $ induces a differential $ 2\pi if\br{z}\d z $ on $ \X_0\br{N} $, and integrating this over the closed path $ \gamma $ gives a complex number $ \int_\gamma 2\pi if\br{z}\d z $ called a \textbf{modular symbol}. A general definition for paths with arbitrary endpoints is given by Manin \cite[Section 1.2]{Man72}, but for the purposes of this paper, it suffices to consider the modular symbol associated to the path from $ 0 $ to cusps $ q \in \QQ \cup \cbr{\infty} $. When the denominator of $ q \in \QQ $ is coprime to $ N $, the image of any smooth path between $ 0 $ and $ q $ is always closed \cite[Proposition 2.2]{Man72}, so it makes sense to write the modular symbol
$$ \mu_f\br{q} \coloneqq \int_0^q 2\pi if\br{z}\d z \in \CC. $$
The key example for $ f $ will be the normalised cuspidal eigenform $ f_E \in \S_2\br{N} $ associated to an elliptic curve $ E $ of conductor $ N $. In this case, it turns out that $ \LLL\br{E} $, as well as $ \LLL\br{E, \chi} $ for any character $ \chi $ of conductor coprime to $ N $, can be written as sums of $ \mu_E\br{q} \coloneqq \mu_{f_E}\br{q} $ for some $ q \in \QQ $. Furthermore, the terms in these sums can be paired up in a way that guarantees integrality, using the following trick.

\begin{lemma}
\label{lem:modular}
Let $ q \in \QQ $ with denominator coprime to $ N \in \NN $, and let $ f \in \S_2\br{N} $. Then
$$ \mu_f\br{q} + \mu_f\br{1 - q} = 2\Re\br{\mu_f\br{q}}. $$
In particular, if $ E $ is an elliptic curve, then $ \mu_E\br{q} + \mu_E\br{1 - q} $ is an integer multiple of $ \c_0\br{E}^{-1}\Omega\br{E} $.
\end{lemma}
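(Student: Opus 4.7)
The plan is to reduce $\mu_f\br{q} + \mu_f\br{1-q} = 2\Re\br{\mu_f\br{q}}$ to the two sub-identities $\mu_f\br{1-q} = \mu_f\br{-q}$ and $\mu_f\br{-q} = \overline{\mu_f\br{q}}$; combined, they give $\mu_f\br{1-q} = \overline{\mu_f\br{q}}$, and the statement is then immediate. The first sub-identity uses only the periodicity $f\br{z+1} = f\br{z}$, while the second uses that $f$ has real Fourier coefficients, which in the intended application $f = f_E$ follows from the integrality of the Hecke eigenvalues $\a_n\br{E}$.

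For $\mu_f\br{1-q} = \mu_f\br{-q}$, I would write the difference as the integral of $2\pi i f\br{z}\,\d z$ along a path in $\HHH$ from $-q$ to $1-q$, and deform it to a rectangular path going vertically from $-q$ up to $-q + i$, horizontally to $1-q + i$, then vertically down to $1-q$. Periodicity forces the two vertical legs to cancel, and the horizontal leg parametrises as $\int_0^1 2\pi i f\br{t - q + i}\,\d t$. Expanding the Fourier expansion of $f$ and integrating term by term gives a sum indexed by $n \geq 1$ with each term a constant multiple of $\int_0^1 e^{2\pi i n t}\,\d t = 0$, and the $n = 0$ contribution is absent because $f$ is a cusp form.

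For $\mu_f\br{-q} = \overline{\mu_f\br{q}}$, I would use the anti-holomorphic involution $\sigma: z \mapsto -\overline z$ of $\HHH$, which fixes $0$ and sends $q$ to $-q$. Given a smooth path $\gamma$ from $0$ to $q$ in $\HHH$, the composition $\sigma \circ \gamma$ is a path from $0$ to $-q$. Combining the functional relation $\overline{f\br{z}} = f\br{-\overline z}$, valid whenever $f$ has real Fourier coefficients, with a change of variables along $\sigma \circ \gamma$, the integral defining $\mu_f\br{-q}$ becomes the complex conjugate of the integral defining $\mu_f\br{q}$.

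For the elliptic-curve corollary, the hypothesis that $q$ has denominator coprime to $N$ ensures the path from $0$ to $q$ descends to a closed cycle in $\X_0\br{N}$, and its pushforward under $\phi_E$ is a closed cycle in $E$. The identity $\phi_E^*\omega_E = \pm\c_0\br{E}\cdot 2\pi i f_E\br{z}\,\d z$ then places $\mu_E\br{q}$ in $\c_0\br{E}^{-1}\Lambda_E$, where $\Lambda_E$ is the N\'eron period lattice of $E$. Since $E$ is defined over $\QQ$, complex conjugation preserves $\Lambda_E$, so $2\Re\br{\mu_E\br{q}} = \mu_E\br{q} + \overline{\mu_E\br{q}}$ lies in $\c_0\br{E}^{-1}\br{\Lambda_E \cap \RR} = \c_0\br{E}^{-1}\Omega\br{E}\ZZ$. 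The only delicate point is tracking a possible factor of $2$ between $\int_{E\br{\RR}}\omega_E$ and the generator of $\Lambda_E \cap \RR$ when $E\br{\RR}$ has two real components, but this is absorbed by the orientation convention making $\Omega\br{E} > 0$.
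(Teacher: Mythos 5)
Your proof of the identity $\mu_f\br{q} + \mu_f\br{1 - q} = 2\Re\br{\mu_f\br{q}}$ is correct and follows the same route as the paper: split into $\mu_f\br{1 - q} = \mu_f\br{-q}$ and $\mu_f\br{-q} = \overline{\mu_f\br{q}}$, the first from invariance under $\twobytwosmall{1}{1}{0}{1}$ and the second from $z \mapsto -\overline{z}$. Where the paper cites Manin for the first step, you prove it directly by the rectangle contour and termwise integration of the Fourier expansion, which is fine, and your explicit remark that the reflection step requires real Fourier coefficients (automatic for $f = f_E$) is a hypothesis the paper leaves implicit; no issue there.

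The gap is in the final integrality step. When $E\br{\RR}$ has two connected components, $\Omega\br{E} = \int_{E\br{\RR}} \omega_E$ is \emph{twice} a generator of $\Lambda_E \cap \RR$, and this factor of $2$ is not ``absorbed by the orientation convention'': choosing the orientation so that $\Omega\br{E} > 0$ only fixes a sign, never an index. As written, your argument (the sum is a real element of $\c_0\br{E}^{-1}\Lambda_E$) only shows that $\mu_E\br{q} + \mu_E\br{1 - q}$ is an integer multiple of $\tfrac{1}{2}\c_0\br{E}^{-1}\Omega\br{E}$ in that case. The conclusion is still true, but you must exploit that the quantity has the special form $\lambda + \overline{\lambda}$ for a single $\lambda \in \c_0\br{E}^{-1}\Lambda_E$, not merely that it is real and lies in the lattice: writing $\Lambda_E = \ZZ\omega_1 + \ZZ\omega_2$ with $\omega_1 \in \RR_{> 0}$ and $\omega_2 + \overline{\omega_2} = k\omega_1$ for some $k \in \ZZ$, one has $\lambda + \overline{\lambda} = \br{2a + bk}\omega_1$ for $\lambda = \c_0\br{E}^{-1}\br{a\omega_1 + b\omega_2}$ up to the factor $\c_0\br{E}^{-1}$; if $E\br{\RR}$ has two components then $k$ is even and $\Omega\br{E} = 2\omega_1$, so $\br{2a + bk}\omega_1 \in \Omega\br{E}\ZZ$, while if $E\br{\RR}$ is connected then $\Omega\br{E} = \omega_1$ and the conclusion is immediate. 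This is the content of the paper's appeal to the homology class $\gamma + \overline{\gamma}$ in $\H_1\br{E\br{\CC}, \ZZ}$ being conjugation-invariant of norm type, whose period is an integer multiple of $\Omega\br{E}$; with that replacement your argument is complete.
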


\begin{proof}
This is similar to the proof in Wiersema--Wuthrich \cite[Lemma 4]{WW22}, but the argument is repeated here for reference. Note that $ \mu_f\br{1 - q} - \mu_f\br{-q} $ is the integral of $ 2\pi if\br{z} $ along the smooth path between $ -q $ and $ \twobytwosmall{1}{1}{0}{1} \cdot \br{-q} $, which is zero \cite[Proposition 1.4]{Man72}, so $ \mu_f\br{1 - q} = \mu_f\br{-q} $. The change of variables $ z \mapsto -\overline{z} $ then transforms $ \mu_f\br{-q} $ into $ \overline{\mu_f\br{q}} $, so the first statement follows. Now let $ \gamma $ be the image of any smooth path from $ 0 $ to $ q $ under the modular parameterisation $ \phi_E : \X_0\br{N} \twoheadrightarrow E $ associated to $ E $. Then $ \mu_E\br{q} = \c_0\br{E}^{-1}\int_\gamma \omega_E $ by definition, so the second statement follows from the first statement and the fact that $ \gamma $ is necessarily an integer multiple of the generator of $ \H_1\br{E\br{\CC}, \ZZ} $ that defines $ \Omega\br{E} $.
\end{proof}

\begin{remark}
When the denominator of $ q \in \QQ $ is coprime to $ N $, these modular symbols $ \mu_E\br{q} $ coincide precisely with the modular symbols $ \mu\br{q} $ defined in Wiersema--Wuthrich \cite[Section 2]{WW22}.
\end{remark}

For this exact reason, the modular symbols $ \mu_E\br{q} $ can be normalised to be integers. More precisely, for an elliptic curve $ E $ of conductor $ N $ with normalised cuspidal eigenform $ f_E \in \S_2\br{N} $, define
$$ \mu_E^+\br{q} \coloneqq \dfrac{\c_0\br{E}}{\Omega\br{E}}\br{\mu_E\br{q} + \mu_E\br{1 - q}} \in \ZZ. $$
The integrality of $ \LLL\br{E} $ is now a formal consequence of the Hecke action on the space of modular symbols.

\pagebreak

\begin{proposition}
\label{prop:untwisted}
Let $ E $ be an elliptic curve of conductor $ N $ coprime to $ n \in \NN $. Then
\vspace{-0.2cm} $$ \c_0\br{E}\LLL\br{E}\br{\a_n\br{E} - \sigma_1\br{n} + \#E\br{\FF_2}\sigma_0^+\br{n}} = \sum_{m \mid n} \sum_{a = 1}^{\fbr{\tfrac{m - 1}{2}}} \mu_E^+\br{\tfrac{a}{m}}, $$
where $ \sigma_1\br{n} $ is the sum of divisors of $ n $ and $ \sigma_0^+ $ is the number of even divisors of $ n $. In particular, both sides lie in $ \ZZ $, and if $ p \nmid N $ is an odd prime, then
\vspace{-0.3cm} $$ \c_0\br{E}\LLL\br{E}\#E\br{\FF_p} = \sum_{a = 1}^{\fbr{\tfrac{p - 1}{2}}} \mu_E^+\br{\tfrac{a}{p}}. $$
\end{proposition}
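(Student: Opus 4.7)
The plan is to apply the Hecke operator $T_n$ to the modular symbol from $0$ to $\infty$, use Lemma \ref{lem:modular} to pair opposite cusps in the resulting sum, and handle the self-paired cusp $1/2$ separately by a Hecke identity at $T_2$.

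First, I would identify $\mu_E\br{\infty}$ with $\pm\Omega\br{E}\LLL\br{E}$ by way of the standard Mellin identity $\L\br{E, 1} = 2\pi\int_0^\infty f_E\br{it}\d t$. Next, since $\gcd\br{n, N} = 1$, the operator $T_n$ admits the classical upper-triangular coset representatives $\twobytwo{a}{b}{0}{d}$ with $ad = n$ and $0 \le b < d$, which transport the path from $0$ to $\infty$ into the $\sigma_1\br{n}$ paths from $b/d$ to $\infty$. Integrating $2\pi i f_E\br{z}\d z$ against this relation and using $T_n f_E = \a_n\br{E} f_E$ yields
$$\br{\sigma_1\br{n} - \a_n\br{E}}\mu_E\br{\infty} = \sum_{ad = n,\, 0 \le b < d}\mu_E\br{\tfrac{b}{d}}.$$

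For each $d \mid n$ I would then partition the inner range $0 \le b < d$ into the singleton $\cbr{0}$, the pairs $\cbr{b, d - b}$ with $0 < b < d/2$, and, only when $d$ is even, the isolated $\cbr{d/2}$. The $b = 0$ term contributes $\mu_E\br{0} = 0$; each pair contributes $\Omega\br{E}\c_0\br{E}^{-1}\mu_E^+\br{b/d}$ by Lemma \ref{lem:modular}; and the $b = d/2$ contribution is the leftover $\mu_E\br{1/2}$. To compute this last term I would run the same Hecke trick with $T_2$, which is only relevant when $2 \nmid N$ since $\gcd\br{n, N} = 1$ otherwise forces $n$ odd and hence $\sigma_0^+\br{n} = 0$. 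The coset decomposition for $T_2$ gives $\a_2\br{E}\mu_E\br{\infty} = 3\mu_E\br{\infty} - \mu_E\br{1/2}$, hence $\mu_E\br{1/2} = \#E\br{\FF_2}\mu_E\br{\infty}$, precisely the correction appearing in the statement.

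Substituting these back, collecting the $\mu_E\br{\infty}$ terms, and dividing out the identification $\mu_E\br{\infty} = \pm\Omega\br{E}\LLL\br{E}$ produces the stated formula; integrality of both sides is automatic from $\mu_E^+ \in \ZZ$. The odd-prime specialisation is then immediate, since the divisors of $p$ are $1$ and $p$, $\sigma_0^+\br{p} = 0$, and $\a_p\br{E} - \sigma_1\br{p} = -\#E\br{\FF_p}$. The subtlest step is tracking signs consistently through the Mellin normalisation and cleanly isolating the $b = d/2$ contribution as a multiple of $\mu_E\br{\infty}$; the Hecke manipulations themselves are standard and go back to Manin.
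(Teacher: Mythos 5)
Your proposal is correct and takes essentially the same approach as the paper: the paper simply cites Manin's Hecke-action formula (whose proof is exactly your coset computation for $ T_n $ applied to the path from $ 0 $ to $ \infty $), extracts $ \mu_E\br{\tfrac{1}{2}} = -\#E\br{\FF_2}\L\br{E, 1} $ from the $ n = 2 $ case just as you do with $ T_2 $, and then performs the same pairing of $ \tfrac{a}{m} $ with $ 1 - \tfrac{a}{m} $ via Lemma \ref{lem:modular}, with the identical case split according to whether $ 2 \mid N $. The only difference is that you rederive the cited Hecke identity from the coset representatives rather than quoting it, which is a matter of exposition, not of substance.
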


\begin{proof}
Integrality and the final statement are immediate consequences from Lemma \ref{lem:modular} and the first statement. To prove the first statement, assume that $ 2 \nmid N $. The Hecke action \cite[Theorem 4.2]{Man72} says
$$ \L\br{E, 1}\br{\a_n\br{E} - \sigma_1\br{n}} = \sum_{m \mid n} \sum_{a = 1}^{m - 1} \mu_E\br{\tfrac{a}{m}}, $$
so in particular $ \L\br{E, 1}\#E\br{\FF_2} = -\mu_E\br{\tfrac{1}{2}} $. By dividing by $ \c_0\br{E}^{-1}\Omega\br{E} $, the first statement reduces to
\vspace{-0.2cm} $$ \sum_{m \mid n} \sum_{a = 1}^{m - 1} \mu_E\br{\tfrac{a}{m}} - \mu_E\br{\tfrac{1}{2}}\sigma_0^+\br{n} = \sum_{m \mid n} \sum_{a = 1}^{\fbr{\tfrac{m - 1}{2}}} \br{\mu_E\br{\tfrac{a}{m}} + \mu_E\br{1 - \tfrac{a}{m}}}, $$
which is simply a rearrangement of sums. Now assume that $ 2 \mid N $, so that $ 2 \nmid n $ by assumption. Then the divisors $ m \mid n $ are all odd, so $ \sigma_0^+\br{n} = 0 $ and this follows again from the Hecke action.
\end{proof}

\begin{remark}
The assumption that $ N $ is coprime to $ n $ is crucial, and removing this may cause integrality to fail, such as for the elliptic curve 11a1 where $ \c_0\br{E} = 1 $ and $ \LLL\br{E} = \tfrac{1}{5} $, but $ \a_{11}\br{E} = 1 $ and $ \sigma_1\br{11} = 12 $.
\end{remark}

The same argument can be adapted for the integrality of $ \LLL\br{E, \chi} $ using Birch's formula.

\begin{proposition}
\label{prop:twisted}
Let $ E $ be an elliptic curve of conductor $ N $, and let $ \chi $ be a character of order $ q $ and conductor $ n $ coprime to $ N $. Then
\vspace{-0.3cm} $$ \c_0\br{E}\LLL\br{E, \chi} = \sum_{a = 1}^{\fbr{\tfrac{n - 1}{2}}} \overline{\chi\br{a}}\mu_E^+\br{\tfrac{a}{n}}. $$
In particular, both sides lie in $ \ZZ\sbr{\zeta_q} $. Furthermore, if $ \c_1\br{E} = 1 $, then $ \LLL\br{E, \chi} \in \ZZ\sbr{\zeta_q} $.
\end{proposition}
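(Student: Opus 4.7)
The plan is to mirror the argument of Proposition \ref{prop:untwisted} by feeding Birch's formula into the pairing trick from Lemma \ref{lem:modular}. Birch's formula expresses the twisted L-value at $s = 1$ as
$$ \L\br{E, \chi, 1} = \dfrac{\tau\br{\chi}}{n} \sum_{a = 1}^{n - 1} \overline{\chi\br{a}}\mu_E\br{\tfrac{a}{n}}, $$
so after substituting into the definition $ \LLL\br{E, \chi} = n\L\br{E, \chi, 1}/\br{\tau\br{\chi}\Omega\br{E}} $ (noting that $ r = 0 $ will either be given or will follow a posteriori from the right-hand side lying in a number field), one obtains
$$ \Omega\br{E}\LLL\br{E, \chi} = \sum_{a = 1}^{n - 1} \overline{\chi\br{a}}\mu_E\br{\tfrac{a}{n}}. $$

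Next, I would pair the $ a $-th and $ \br{n - a} $-th terms of this sum. The assumption that $ \chi $ is even gives $ \overline{\chi\br{n - a}} = \overline{\chi\br{-a}} = \overline{\chi\br{a}} $, so the sum collapses to
$$ \sum_{a = 1}^{\fbr{\tfrac{n - 1}{2}}} \overline{\chi\br{a}}\br{\mu_E\br{\tfrac{a}{n}} + \mu_E\br{1 - \tfrac{a}{n}}}. $$
Multiplying by $ \c_0\br{E}/\Omega\br{E} $ and invoking Lemma \ref{lem:modular} identifies each inner bracket with $ \mu_E^+\br{a/n} \in \ZZ $, yielding the stated identity. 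Integrality in $ \ZZ\sbr{\zeta_q} $ is then immediate, since $ \overline{\chi\br{a}} \in \ZZ\sbr{\zeta_q} $ and the modular symbols are integers.

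For the final assertion under Stevens's conjecture, I would appeal to the analogous analysis on $ \X_1\br{N} $ rather than $ \X_0\br{N} $, following Wiersema--Wuthrich \cite[Theorem 2]{WW22}. The point is that the modular parameterisation factoring through $ \X_1\br{N} $ gives a refined integral structure on modular symbols scaled only by $ \c_1\br{E} $, not $ \c_0\br{E} $; when $ \c_1\br{E} = 1 $, the pairing argument above carries through without the leading factor of $ \c_0\br{E} $, placing $ \LLL\br{E, \chi} $ itself in $ \ZZ\sbr{\zeta_q} $.

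The main obstacle is purely bookkeeping: Birch's formula appears in the literature with varied normalisations of $ \tau\br{\chi} $ versus $ \tau\br{\overline{\chi}} $, and the conventions for $ \L\br{E, \chi, s} $ used here (flagged in the remark preceding the proposition) differ from the motivic convention in Wiersema--Wuthrich, so I would need to track the complex conjugation on $ \chi $ carefully and verify that the evenness hypothesis compensates for any remaining sign or Gauss-sum discrepancies before pairing terms.
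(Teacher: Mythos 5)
Your proposal matches the paper's proof essentially step for step: the modified Birch formula, division by $ n^{-1}\c_0\br{E}^{-1}\tau\br{\chi}\Omega\br{E} $, pairing the $ a $-th and $ \br{n - a} $-th terms via the evenness of $ \chi $, identification of each pair with $ \mu_E^+\br{\tfrac{a}{n}} $ through Lemma \ref{lem:modular}, and the final statement via the analogous argument on $ \X_1\br{N} $ with $ \c_1\br{E} $ following Wiersema--Wuthrich. The only detail you elide is the self-paired middle term when $ 2 \mid n $, which the paper dispatches by noting $ \overline{\chi\br{\tfrac{n}{2}}} = 0 $ since $ \gcd\br{\tfrac{n}{2}, n} \ne 1 $; this is harmless and the rest, including your care over the automorphic versus motivic convention (the paper cites the coprimality of $ n $ and $ N $ for this), is exactly the paper's route.
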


\begin{proof}
Integrality follows immediately from Lemma \ref{lem:modular} and the first statement. To prove the first statement, a modification of Birch's formula \cite[Lemma 6]{WW22} yields
$$ \L\br{E, \chi, 1} = \dfrac{\tau\br{\chi}}{n}\sum_{a = 1}^{n - 1} \overline{\chi\br{a}}\mu_E\br{\tfrac{a}{n}}, $$
noting that the automorphic and motivic definitions of $ \LLL\br{E, \chi} $ agree by the assumption that $ n $ is coprime to $ N $ \cite[Lemma 18]{WW22}. By dividing by $ n^{-1}\c_0\br{E}^{-1}\tau\br{\chi}\Omega\br{E} $, the first statement reduces to
\vspace{-0.2cm} $$ \sum_{a = 1}^{n - 1} \overline{\chi\br{a}}\mu_E\br{\tfrac{a}{n}} = \sum_{a = 1}^{\fbr{\tfrac{n - 1}{2}}} \overline{\chi\br{a}}\br{\mu_E\br{\tfrac{a}{n}} + \mu_E\br{1 - \tfrac{a}{n}}}. $$
Now $ \overline{\chi\br{a}} = \overline{\chi\br{n - a}} $ since $ \chi $ is even, and $ \overline{\chi\br{\tfrac{n}{2}}} = 0 $ when $ 2 \mid n $, so this is again a rearrangement of sums. The final statement is an analogous argument with $ \c_1\br{E} $ \cite[Proposition 8]{WW22}.
\end{proof}

\begin{remark}
The assumption that $ n $ is coprime to $ N $ can be weakened slightly to $ n \nmid N $ for the first two statements \cite[Proposition 7]{WW22}, and to $ n \nmid N $ for the final statement \cite[Proposition 8]{WW22}. Removing either of these conditions may cause integrality to fail, such as for the elliptic curve 50b1 satisfying $ \c_0\br{E} = 1 $ and the unique quadratic character of conductor $ 5 $, where $ \LLL\br{E, \chi} = \tfrac{1}{3} $.
\end{remark}

\pagebreak

Observe that the right hand sides of Proposition \ref{prop:untwisted} and Proposition \ref{prop:twisted} are highly similar. More precisely, since $ \overline{\chi\br{a}} \equiv 1 \mod \abr{1 - \zeta_q} $ except when $ \gcd\br{a, n} \ne 1 $, the right hand sides are congruent modulo $ \abr{1 - \zeta_q} $, up to a small error term consisting of extraneous modular symbols $ \mu_E^+\br{\tfrac{a}{m}} $ for each proper divisor $ m $ of $ n $.

\begin{corollary}
\label{cor:congruence}
Let $ E $ be an elliptic curve of conductor $ N $, and let $ \chi $ be a character of order $ q $ and conductor $ n $ coprime to $ N $. Then
$$ \c_0\br{E}\LLL\br{E, \chi} \equiv \c_0\br{E}\LLL\br{E}\br{\a_n\br{E} - \sigma_1\br{n} + \#E\br{\FF_2}\sigma_0^+\br{n}} - \epsilon_{E, n} \mod \abr{1 - \zeta_q}, $$
where
\vspace{-0.3cm} $$ \epsilon_{E, n} \coloneqq \sum_{\substack{a = 1 \\ \gcd\br{a, n} \ne 1}}^{\fbr{\tfrac{n - 1}{2}}} \mu_E^+\br{\tfrac{a}{n}} + \sum_{\substack{m \mid n \\ m \ne n}} \sum_{a = 1}^{\fbr{\tfrac{m - 1}{2}}} \mu_E^+\br{\tfrac{a}{m}}. $$
In particular, if $ p \nmid N $ is an odd prime, then $ \epsilon_{E, p} = 0 $ and
$$ \c_0\br{E}\LLL\br{E, \chi} \equiv -\c_0\br{E}\LLL\br{E}\#E\br{\FF_p} \mod \abr{1 - \zeta_q}, $$
and if furthermore $ q \nmid \c_0\br{E} $, then
$$ \LLL\br{E, \chi} \equiv -\LLL\br{E}\#E\br{\FF_p} \mod \abr{1 - \zeta_q}, $$
where the denominators of both sides are inverted modulo $ \abr{1 - \zeta_q} $.
\end{corollary}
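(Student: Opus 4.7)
The plan is to compare the modular symbol expressions from Propositions \ref{prop:untwisted} and \ref{prop:twisted} and identify the difference modulo $ \abr{1 - \zeta_q} $. The key elementary fact I would record first is that any $ q $-th root of unity $ \zeta_q^k $ satisfies $ \zeta_q^k \equiv 1 \mod \abr{1 - \zeta_q} $, since $ 1 - \zeta_q^k $ is divisible by $ 1 - \zeta_q $ in $ \ZZ\sbr{\zeta_q} $. Applied to the values of $ \chi $, this gives $ \overline{\chi\br{a}} \equiv 1 \mod \abr{1 - \zeta_q} $ whenever $ \gcd\br{a, n} = 1 $, while $ \overline{\chi\br{a}} = 0 $ identically when $ \gcd\br{a, n} \ne 1 $.

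Reducing Proposition \ref{prop:twisted} modulo $ \abr{1 - \zeta_q} $ therefore yields
$$ \c_0\br{E}\LLL\br{E, \chi} \equiv \sum_{\substack{a = 1 \\ \gcd\br{a, n} = 1}}^{\fbr{\tfrac{n - 1}{2}}} \mu_E^+\br{\tfrac{a}{n}} \mod \abr{1 - \zeta_q}. $$
I would then invoke Proposition \ref{prop:untwisted} to express $ \c_0\br{E}\LLL\br{E}\br{\a_n\br{E} - \sigma_1\br{n} + \#E\br{\FF_2}\sigma_0^+\br{n}} $ as the double sum of $ \mu_E^+\br{\tfrac{a}{m}} $ over all divisors $ m \mid n $ and $ a \in \cbr{1, \ldots, \fbr{\tfrac{m - 1}{2}}} $. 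Subtracting the displayed congruence isolates exactly the pairs $ \br{m, a} $ excluded by the coprimality condition: those with $ m = n $ and $ \gcd\br{a, n} \ne 1 $, together with all pairs for which $ m $ is a proper divisor of $ n $. This is precisely the definition of $ \epsilon_{E, n} $, so the first statement follows.

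For an odd prime conductor $ n = p \nmid N $, the only proper divisor of $ p $ is $ 1 $, which contributes an empty inner sum, and no $ a \in \cbr{1, \ldots, \fbr{\tfrac{p - 1}{2}}} $ can satisfy $ \gcd\br{a, p} \ne 1 $, so $ \epsilon_{E, p} = 0 $. Combined with $ \sigma_0^+\br{p} = 0 $ and $ \a_p\br{E} - \sigma_1\br{p} = -\#E\br{\FF_p} $, this yields the second congruence, and the third then follows by dividing by $ \c_0\br{E} $, which is a unit in the localisation of $ \ZZ\sbr{\zeta_q} $ at $ \abr{1 - \zeta_q} $ precisely when $ q \nmid \c_0\br{E} $. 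I do not anticipate any genuine obstacle, as the argument is essentially formal bookkeeping once the elementary congruence $ \overline{\chi\br{a}} \equiv 1 \mod \abr{1 - \zeta_q} $ is in place; the only delicate point is ensuring that the leftover terms line up cleanly with the stated $ \epsilon_{E, n} $, which is a matter of rearranging indices.
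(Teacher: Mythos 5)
Your proposal is correct and follows essentially the same route as the paper: reduce the identity of Proposition \ref{prop:twisted} modulo $ \abr{1 - \zeta_q} $ using $ \overline{\chi\br{a}} \equiv 1 $ for $ \gcd\br{a, n} = 1 $ and $ \overline{\chi\br{a}} = 0 $ otherwise, compare with the modular symbol sum of Proposition \ref{prop:untwisted}, and observe that the discrepancy is exactly $ \epsilon_{E, n} $, with the prime-conductor case and the division by $ \c_0\br{E} $ handled precisely as the paper intends. No gaps.
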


\begin{remark}
By stating the congruence at this generality, some interesting parity results for algebraic L-values twisted by quadratic characters can also be derived. For instance, if $ \chi $ is a quadratic character of conductor $ n = p_1p_2 $, where $ p_1 $ and $ p_2 $ are two distinct primes coprime to $ N $, then
$$ \c_0\br{E}\LLL\br{E, \chi} \equiv \c_0\br{E}\LLL\br{E}\br{\a_{p_1}\br{E}\a_{p_2}\br{E} - n - p_1 - p_2 - 1} \mod 2. $$
When $ 2 \nmid N $, something similar occurs for the unique quadratic character $ \chi $ of conductor $ n = 8 $, where
$$ \c_0\br{E}\LLL\br{E, \chi} \equiv \c_0\br{E}\LLL\br{E}\br{\a_2\br{E} + 1}\br{\a_2\br{E} + 2}\br{\a_2\br{E} - 3} \mod 2. $$
In both cases, it can be verified explicitly that $ \epsilon_{E, n} \equiv 0 \mod 2 $, and the remaining terms are obtained from recurrences for Fourier coefficients of eigenforms. Neither of these will be explored further in the paper.
\end{remark}

\begin{remark}
Without having $ \c_0\br{E} $, both integrality results and the congruence easily fail in trivial ways, but $ q \nmid \c_0\br{E} $ is a relatively mild assumption, since $ \c_0\br{E} \ne 1 $ seems to be relatively rare.
\end{remark}

\begin{remark}
\label{rem:equality}
Algebraic twisted L-values $ \LLL\br{E, \chi} $ satisfy Deligne's period conjecture \cite[Theorem 2.7]{BD07}, so in particular they are Galois equivariant, in the sense that $ \LLL\br{E, \sigma \circ \chi} = \sigma\br{\LLL\br{E, \chi}} $ for any $ \sigma \in \Gal\br[1]{\QQ\br{\zeta_q} / \QQ} $. With this property, $ \LLL\br{E} $ can be expressed in terms of the sum of $ \LLL\br{E, \chi} $ for all characters $ \chi $ of a given conductor and order. For instance, when $ \chi $ is a cubic character of conductor $ n $,
$$ 1 + \chi\br{a} + \overline{\chi\br{a}} =
\begin{cases}
3 & \text{if} \ a \in \br{\ZZ / n\ZZ}^{\times3} \\
1 & \text{if} \ a \in \br{\ZZ / n\ZZ} \setminus \br{\ZZ / n\ZZ}^\times \\
0 & \text{if} \ a \in \br{\ZZ / n\ZZ}^\times \setminus \br{\ZZ / n\ZZ}^{\times3}
\end{cases},
$$
so the identities in Proposition \ref{prop:untwisted} and Proposition \ref{prop:twisted} combine to yield
\vspace{-0.3cm} $$ \c_0\br{E}\LLL\br{E, \chi} + \c_0\br{E}\LLL\br{E, \overline{\chi}} + \c_0\br{E}\LLL\br{E}\br{\a_n\br{E} - \sigma_1\br{n} + \#E\br{\FF_2}\sigma_0^+\br{n}} = 3\sum_{\substack{a = 1 \\ a \in \br[0]{\ZZ / n\ZZ}^{\times 3}}}^{\fbr{\tfrac{n - 1}{2}}} \mu_E^+\br{\tfrac{a}{n}} + \epsilon_{E, n}. $$
By Galois equivariance, the first two terms combine to $ 2\c_0\br{E}\Re\br{\LLL\br{E, \chi}} $, so this expresses $ \Re\br{\LLL\br{E, \chi}} $ in terms of $ \LLL\br{E} $ up to a few error terms consisting of modular symbols. By reducing modulo $ 3 $, this recovers the congruence in Corollary \ref{cor:congruence}, but also shows that the congruence would not a priori hold modulo $ 9 $, unless the extraneous modular symbols $ \mu_E^+\br{\tfrac{a}{n}} $ for each cubic residue $ a $ modulo $ n $ sum to a multiple of $ 3 $.
\end{remark}

\pagebreak

\section{Denominators of L-values}

\label{sec:valuation}

This section proves a few results on the $ q $-adic valuations of denominators of algebraic L-values, where $ q $ is an odd prime, which may be of independent interest. Since $ \c_0\br{E}\LLL\br{E}\#E\br{\FF_p} \in \ZZ $, the $ q $-adic valuation of $ \c_0\br{E}\LLL\br{E} \in \QQ $ can be bounded from below by the $ q $-adic valuation of $ \#E\br{\FF_p} $, which is in turn controlled by $ \tor\br{E} $ in the denominator of $ \BSD\br{E} $. When $ q \ne 3 $, under the $ q $-part of the Birch--Swinnerton-Dyer conjecture, such a lower bound follows from Lorenzini's result that $ \ord_q\br{\#\tor\br{E}} \le \ord_q\br{\Tam\br{E}} $ with finitely many exceptions \cite[Proposition 1.1]{Lor11}, but the case $ q = 3 $ requires more work.

\begin{lemma}
\label{lem:borel}
Let $ E $ be an elliptic curve without complex multiplication such that $ E $ has a point of order $ 3 $ but $ 3 \nmid \Tam\br{E} $. Then $ \im\overline{\rho_{E, 3}} $ is a Borel subgroup.
\end{lemma}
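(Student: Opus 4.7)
The plan is to first use the rational point of order $3$ to confine $\im\overline{\rho_{E, 3}}$ to a particular parabolic subgroup, and then rule out all proper subgroups of it using the Tamagawa hypothesis. Choosing a basis of $E\sbr{3}$ in which the rational $3$-torsion point $P$ is the first basis vector, the image lies in
$$H \coloneqq \cbr{\twobytwo{1}{b}{0}{d} \st b \in \FF_3, \ d \in \FF_3^\times},$$
which has order $6$. The Weil pairing gives $\det\overline{\rho_{E, 3}} = \overline{\chi}_{\mathrm{cyc}}$, surjective onto $\FF_3^\times$ since $\QQ\br{\zeta_3} \ne \QQ$, so the image surjects onto the lower-right diagonal of $H$. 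Up to conjugation in $H$, only two possibilities remain: the image equals $H$ itself (the desired Borel), or it equals the split torus $\cbr{\twobytwosmall{1}{0}{0}{1}, \twobytwosmall{1}{0}{0}{-1}}$ of order $2$.

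In the latter case, $E\sbr{3} \cong \ZZ / 3\ZZ \oplus \mu_3$ as a Galois module, so $E$ also admits a Galois-stable subgroup $C \cong \mu_3$ independent of $\abr{P}$. I would then derive $3 \mid \Tam\br{E}$ by running through the primes $p$ of bad reduction. At a prime of split multiplicative reduction with $p \not\equiv 1 \pmod 3$, the Tate uniformisation $E\br{\overline{\QQ}_p} \cong \overline{\QQ}_p^\times / q_E^\ZZ$ gives the local decomposition $E\sbr{3} \cong \mu_3 \oplus \abr{q_E^{1/3}}$ as Galois modules; the uniqueness of the $\mu_3$-isotypic component forces the global $C$ to match the Tate-$\mu_3$, so the rational point $P$ corresponds to a class of $q_E^{1/3}$, forcing $3 \mid v_p\br{q_E} = v_p\br{\Delta_E} = c_p$. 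The case $p \equiv 1 \pmod 3$, non-split multiplicative reduction, and additive reduction all require more care, but can be handled respectively by a direct count in $\QQ_p^\times / q_E^\ZZ$, by a quadratic twist reducing to the split case, and by a Kodaira-by-Kodaira analysis.

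The hardest step is expected to be the additive-reduction case, where tracking how the Galois-equivariant embeddings of $\ZZ / 3\ZZ$ and $\mu_3$ sit inside the N\'eron component group involves a case split over the Kodaira symbols. The non-CM hypothesis enters to exclude the sporadic curves with complex multiplication by $\QQ\br{\sqrt{-3}}$, whose $3$-torsion naturally decomposes as $\ZZ / 3\ZZ \oplus \mu_3$ through the CM action rather than via the local reduction mechanism above, so that the implication $E\sbr{3} \cong \ZZ / 3\ZZ \oplus \mu_3 \implies 3 \mid \Tam\br{E}$ could otherwise fail for them. Once this case analysis is assembled, the contrapositive yields $\im\overline{\rho_{E, 3}} = H$, as claimed.
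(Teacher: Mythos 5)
Your reduction of the problem to a dichotomy is fine: with the rational point as first basis vector the image lies in the order-$6$ group $H$ of matrices $\twobytwosmall{1}{b}{0}{d}$, surjectivity of the determinant (Weil pairing) kills the unipotent subgroup, and every order-$2$ possibility is conjugate to the diagonal one, so either $\im\overline{\rho_{E, 3}} = H $ (which is 3B.1.1, the paper's ``Borel'') or $ E\sbr{3} \cong \ZZ / 3\ZZ \oplus \mu_3 $. This is a genuinely different route from the paper, which never argues locally at all: it parametrises all curves with a point of order $3$ as $ y^2 + xy \pm \tfrac{b}{a}y = x^3 $ following Barrios--Roy, uses their Tamagawa formulas and Tate's algorithm to show that $ 3 \nmid \Tam\br{E} $ forces $ a $ to be a cube and $ b = 1 $, and then identifies the image as 3B.1.1 from the shape of the $j$-invariant via Zywina's classification. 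The problem is that your second half, the implication ``$ E\sbr{3} \cong \ZZ / 3\ZZ \oplus \mu_3 $ and non-CM $ \Rightarrow 3 \mid \Tam\br{E} $'', is not established by the case analysis you sketch, and the cases you defer are exactly where the difficulty lives.

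Concretely, your strategy must exhibit at least one bad prime with $ 3 \mid \Tam_p\br{E} $, but your list cannot deliver one in general. At a non-split multiplicative prime one has $ \Tam_p\br{E} \in \cbr{1, 2} $ unconditionally, so no divisibility by $3$ can be extracted there; the quadratic-twist trick computes Tamagawa numbers of the twisted curve, not of $ E $, and what the split-Cartan hypothesis really gives at such a prime is only $ 3 \mid \ord_p\br{\Delta} $ (full $3$-torsion over the unramified quadratic extension forces the Tate parameter to be a cube), which converts into $ 3 \mid \Tam_p\br{E} $ only in the split case. Additive reduction at $ p \ne 3 $ is in fact immediate (the component group must contain the $3$-torsion point since the identity component contributes $ \FF_p^+ $ and the formal group is $p$-primary), but at $ p = 3 $ the component group has order at most $4$ and the formal group may contain $3$-torsion, so nothing follows. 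Hence nothing in your outline excludes a curve all of whose bad primes are non-split multiplicative, or whose only bad prime is $3$ with additive reduction and $ 3 \nmid \Tam_3\br{E} $ --- and the latter configuration genuinely occurs (27a3, 27a4, 54a3 have a rational point of order $3$, bad reduction only at $3$, and trivial Tamagawa product). Those examples are CM, but invoking the non-CM hypothesis does not by itself rule such configurations out for split-Cartan curves: some global input is needed to show they cannot arise, and that exclusion is precisely what the paper's explicit-family computation accomplishes. Until you supply such an argument (which in practice tends to circle back to an explicit parametrisation of the relevant modular curve), the proof is incomplete.
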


\begin{proof}
By the assumption that $ E $ has a point of order $ 3 $, $ E $ is isomorphic either to $ y^2 + cy = x^3 $ for some cube-free $ c \in \NN $, which has complex multiplication by $ \ZZ\sbr{\zeta_3} $, or to
$$ E_{1, \pm b / a} : y^2 + xy \pm \dfrac{b}{a}y = x^3, $$
for some coprime $ a, b \in \NN $ \cite[Proposition 2.4]{BR22}. If $ 3 \nmid \ord_p\br{a} $ for some prime $ p $, then $ 3 \mid \Tam_p\br[1]{E_{1, \pm b / a}} $ \cite[Theorem 3.5]{BR22}. This contradicts the assumption that $ 3 \nmid \Tam\br{E} $, so $ a = d^3 $ for some $ d \in \NN $ coprime to $ b $. The change of variables $ \br{x, y} \mapsto \br{x / d^2, y / d^3} $ yields an isomorphism from $ E_{1, \pm b / a} $ to
$$ E_{d, \pm b} : y^2 + dxy \pm by = x^3, $$
which has discriminant $ \Delta = \pm b^3\br{d^3 - 27b} $. Now let $ p \mid b $, so that $ p \mid \Delta $ and $ \ord_p\br{d^3 - 27b} = 0 $ by coprimality. By step $ 2 $ of Tate's algorithm, since $ T^2 + dT $ splits in $ \FF_p $, $ E_{d, \pm b} $ has Kodaira symbol $ \textbf{I}_{\ord_p\br{\Delta}} $ and has split mutiplicative reduction at $ p $, so $ \Tam_p\br{E_{d, \pm b}} = \ord_p\br{\Delta} = 3\ord_p\br{b} $. This forces $ b = 1 $ by the assumption that $ 3 \nmid \Tam\br{E} $, so the j-invariant of $ E_{d, \pm b} = E_{d, \pm1} $ is given by
$$ \dfrac{d^3\br{d^3 \mp 24}^3}{\pm d^3 - 27} = 27\dfrac{\br{\tfrac{27}{\pm d^3 - 27} + 1}\br{\tfrac{27}{\pm d^3 - 27} + 9}^3}{\br{\tfrac{27}{\pm d^3 - 27}}^3}, $$
which implies that $ \im\overline{\rho_{E, 3}} $ is the Borel subgroup 3B.1.1 \cite[Theorem 1.2]{Zyw15}.
\end{proof}

Assuming the $ 3 $-part of the Birch--Swinnerton-Dyer conjecture, a divisibility result for $ \BSD\br{E} $ can be derived from the integrality of $ \c_0\br{E}\LLL\br{E}\#E\br{\FF_p} $, via a case-by-case analysis on $ \im\rho_{E, 3} $.

\begin{proposition}
\label{prop:divide}
Let $ E $ be an elliptic curve of conductor $ N $ such that $ \L\br{E, 1} \ne 0 $ and $ \tor\br{E} \cong \ZZ / 3\ZZ $. Assume that $ \ord_3\br{\LLL\br{E}} \le \ord_3\br{\BSD\br{E}} $. Then $ 3 \mid \c_0\br{E}\Tam\br{E}\#\Sha\br{E} $.
\end{proposition}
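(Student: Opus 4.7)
The plan is to argue by contrapositive: assuming $ 3 \nmid \c_0\br{E}\Tam\br{E}\#\Sha\br{E} $, I will derive $ \ord_3\br{\LLL\br{E}} > \ord_3\br{\BSD\br{E}} $, contradicting the hypothesis. Under $ \L\br{E, 1} \ne 0 $, the work of Gross--Zagier and Kolyvagin gives $ \rk\br{E} = 0 $ and $ \Sha\br{E} $ finite, so $ \tor\br{E} \cong \ZZ / 3\ZZ $ yields $ \BSD\br{E} = \Tam\br{E}\#\Sha\br{E} / 9 $ and thus $ \ord_3\br{\BSD\br{E}} = -2 $. By Proposition \ref{prop:untwisted}, for every odd prime $ p \nmid N $, $ \c_0\br{E}\LLL\br{E} \cdot \#E\br{\FF_p} \in \ZZ $, so with $ 3 \nmid \c_0\br{E} $ it suffices to exhibit one such prime with $ \ord_3\br{\#E\br{\FF_p}} = 1 $: then $ \ord_3\br{\LLL\br{E}} \ge -1 > -2 $ as required.

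By the Chebotarev density theorem, such a prime exists if and only if $ \im\overline{\rho_{E, 9}} $ contains some $ M $ with $ \det\br{M - I} \not\equiv 0 \mod 9 $, equivalently $ \tr\br{M} \not\equiv 1 + \det\br{M} \mod 9 $. Suppose for contradiction no such $ M $ exists. Setting the complex multiplication case aside, Lemma \ref{lem:borel} gives $ \im\overline{\rho_{E, 3}} = \cbr{\twobytwo{1}{*}{0}{*}} \subseteq \GL\br{3} $, so I choose a basis of $ E\sbr{9} $ with first vector lifting the rational $ 3 $-torsion point, allowing every $ M \in \im\overline{\rho_{E, 9}} $ to be written in the form $ \twobytwo{1 + 3\alpha}{b}{3\gamma}{d} $ with $ \alpha, \gamma \in \FF_3 $; the trace-determinant identity then simplifies modulo $ 9 $ to $ \alpha\br{d - 1} \equiv \gamma b \mod 3 $.

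A case analysis on this identity shows that any lift of $ \twobytwo{1}{1}{0}{1} $ in $ \im\overline{\rho_{E, 9}} $ has $ \gamma \equiv 0 \mod 3 $, any lift of $ \twobytwo{1}{0}{0}{2} $ has $ \alpha \equiv 0 \mod 3 $, the kernel of the reduction onto $ \im\overline{\rho_{E, 3}} $ lies inside $ \cbr{I + 3\twobytwo{0}{y_2}{0}{y_4}} $, and the requirement that the image be closed under multiplication inside the trace-identity locus forces the upper-left parameter $ \alpha_1 $ of a lift of $ \twobytwo{1}{1}{0}{1} $ to coincide modulo $ 3 $ with the lower-left parameter $ \gamma_{-1} $ of a lift of $ \twobytwo{1}{0}{0}{2} $. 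I then verify by direct computation that $ \br{1, -3\alpha_1} \in \br{\ZZ / 9\ZZ}^2 $ is fixed by each of these lifts and by the kernel, hence by the entire image; since this vector has order $ 9 $ and reduces to the rational $ 3 $-torsion point, it corresponds to a rational point of order $ 9 $ on $ E $, contradicting $ \tor\br{E} \cong \ZZ / 3\ZZ $. The complex multiplication case reduces to $ j = 0 $ curves $ y^2 + cy = x^3 $, where the image lies inside the normalizer of a ramified Cartan subgroup and is handled separately via the appendix's Galois image tables.

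The main obstacle is the structural analysis of $ \im\overline{\rho_{E, 9}} $ modulo $ 9 $, especially verifying the closure condition on products of lifts and checking that the resulting fixed vector has order exactly $ 9 $; the appendix's classification of $ 3 $-adic Galois images should substantially streamline this step.
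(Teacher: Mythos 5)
Your proposal is correct, and while it shares the paper's skeleton (integrality from Proposition \ref{prop:untwisted} plus Chebotarev to reduce to finding $ M \in \im\overline{\rho_{E, 9}} $ with $ \ord_3\br{1 + \det\br{M} - \tr\br{M}} = 1 $, with Lemma \ref{lem:borel} and a rational $ 9 $-torsion contradiction handling the failure cases), it replaces the paper's central step by a genuinely different argument. The paper inspects the Rouse--Sutherland--Zureick-Brown classification in Table \ref{tab:3adic} row by row, finds the required matrix $ \M_{E, 3} $ in all but two images, and treats 9.72.0.1 via Lemma \ref{lem:borel} and 9.72.0.5 via a fixed $ 9 $-torsion vector. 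You instead show directly, for any non-CM curve (where Lemma \ref{lem:borel} forces $ \im\overline{\rho_{E, 3}} $ to be the full Borel $ \cbr[0]{\twobytwosmall{1}{*}{0}{*}} $ under the contrapositive hypothesis $ 3 \nmid \Tam\br{E} $), that if every $ M = \twobytwosmall{1 + 3\alpha}{b}{3\gamma}{d} $ in the mod-$ 9 $ image satisfies $ \alpha\br{d - 1} \equiv \gamma b \bmod 3 $, then the image fixes a vector of order $ 9 $, contradicting $ \tor\br{E} \cong \ZZ / 3\ZZ $; I checked your individual claims (lifts of $ \twobytwosmall{1}{1}{0}{1} $ have $ \gamma \equiv 0 $, lifts of $ \twobytwosmall{1}{0}{0}{2} $ have $ \alpha \equiv 0 $, the kernel constraint, the relation $ \alpha_1 \equiv \gamma_{-1} $, and that $ \br{1, -3\alpha_1} $ is fixed by a generating set) and they all hold. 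What your route buys is a more self-contained, classification-free treatment of the non-CM case, in effect proving the cleaner statement that a Borel mod-$ 3 $ image together with $ \#E\br{\FF_p} \equiv 0 \bmod 9 $ for all good $ p $ forces a rational point of order $ 9 $; it also only needs $ 1 + \det\br{M} - \tr\br{M} \not\equiv 0 \bmod 9 $ rather than the paper's exact value $ 3 $. What the paper's route buys is uniformity: the table handles CM and non-CM images alike, whereas you must still fall back on the appendix tables (or an explicit analysis of $ y^2 + cy = x^3 $) for the CM case, exactly where the diagonal-type images 27.1944.55.31 and 27.1944.55.37 live and Lemma \ref{lem:borel} is unavailable. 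One presentational caveat: the trace identity is automatically satisfied by every element of the kernel of reduction, so your claim that the kernel lies in $ \cbr[0]{I + 3\twobytwosmall{0}{y_2}{0}{y_4}} $ does not come from the "case analysis on this identity" applied to kernel elements alone; it requires applying the identity to products of kernel elements with the chosen lifts, i.e.\ the closure-under-multiplication argument you invoke only for the $ \alpha_1 \equiv \gamma_{-1} $ step should be cited for the kernel step as well.
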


\begin{proof}
Assume that $ 3 \nmid \c_0\br{E} $. By Proposition \ref{prop:untwisted} and the assumptions, for an odd prime $ p $,
$$ \ord_3\br{\dfrac{\Tam\br{E}\#\Sha\br{E}}{9}\#E\br{\FF_p}} \ge \ord_3\br{\LLL\br{E}\#E\br{\FF_p}} \ge 0, $$
so it suffices to find an odd prime $ p \nmid N $ such that $ \#E\br{\FF_p} \equiv 3 \mod 9 $. By Chebotarev's density theorem, this reduces to finding a matrix $ \M_{E, 3} \in \im\overline{\rho_{E, 9}} $ such that $ 1 + \det\br{\M_{E, 3}} - \tr\br{\M_{E, 3}} = 3 $. By inspecting Table \ref{tab:3adic}, such matrices exist for all $ \im\rho_{E, 3} $ except for the two $ 3 $-adic Galois images 9.72.0.1 and 9.72.0.5, so these have to be handled separately. If $ \im\rho_{E, 3} $ is 9.72.0.1, then $ \im\overline{\rho_{E, 3}} $ is 3Cs.1.1 and not 3B.1.1, so Lemma \ref{lem:borel} implies that $ 3 \mid \Tam\br{E} $. Otherwise $ \im\rho_{E, 3} $ is 9.72.0.5, then $ \im\overline{\rho_{E, 9}} $ fixes a subspace of the group of $ 9 $-torsion points of $ E $, so $ E\br{\QQ} \cong \ZZ / 9\ZZ $, which contradicts the assumption that $ E\br{\QQ} \cong \ZZ / 3\ZZ $.
\end{proof}

\begin{remark}
The conclusion of Proposition \ref{prop:divide} was already observed by Melistas \cite[Example 3.8]{Mel23}, where the elliptic curves 27a3, 27a4, and 54a3 all have $ E\br{\QQ} \cong \ZZ / 3\ZZ $ and $ \Tam\br{E}\#\Sha\br{E} = 1 $ but $ \c_0\br{E} = 3 $. By the work of Lorenzini, it is generally expected that the factors in $ \Tam\br{E} $ would cancel $ \#\tor\br{E} $, but in this case it is necessary to consider $ \#\Sha\br{E} $ as well, such as in the elliptic curve 1638j3 where $ E\br{\QQ} \cong \ZZ / 3\ZZ $ and $ \c_0\br{E}\Tam\br{E} = 1 $ but $ \#\Sha\br{E} = 9 $. Note also that the statement is false for $ \tor\br{E} \cong \ZZ / 3\ZZ $ but $ \rk\br{E} > 0 $, such as for the elliptic curve 91b1 where $ \c_0\br{E}\Tam\br{E}\#\Sha\br{E} = 1 $.
\end{remark}

\pagebreak

A lower bound on the $ q $-adic valuation of $ \c_0\br{E}\LLL\br{E} $ then follows under the $ q $-part of the Birch--Swinnerton-Dyer conjecture, but when $ E $ has no rational $ q $-isogeny the bound can be made unconditional.

\begin{theorem}
\label{thm:valuation}
Let $ E $ be an elliptic curve of conductor $ N $ such that $ \L\br{E, 1} \ne 0 $, and let $ q $ be an odd prime.
\begin{enumerate}
\item If $ E $ has no rational $ q $-isogeny, then $ \ord_q\br{\c_0\br{E}\LLL\br{E}} \ge 0 $.
\item Assume that $ \ord_q\br{\LLL\br{E}} = \ord_q\br{\BSD\br{E}} $. Then $ \ord_q\br{\c_0\br{E}\LLL\br{E}} \ge -1 $.
\end{enumerate}
\end{theorem}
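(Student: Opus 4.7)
Both parts begin from Proposition \ref{prop:untwisted}, which yields $ \c_0\br{E}\LLL\br{E}\#E\br{\FF_p} \in \ZZ $ for every odd prime $ p \nmid N $, and therefore
$$ \ord_q\br{\c_0\br{E}\LLL\br{E}} \ge -\ord_q\br{\#E\br{\FF_p}}. $$
Since $ \#E\br{\FF_p} = 1 - \tr\br{M} + \det\br{M} $ when $ M = \overline{\rho_{E, q^k}}\br{\Fr_p} $, the Chebotarev density theorem reduces both parts to producing an $ M \in \im\overline{\rho_{E, q^k}} $ for which $ 1 - \tr\br{M} + \det\br{M} $ is $ q $-adically as small as possible.

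For part (1), set $ G = \im\overline{\rho_{E, q}} $ and $ V = \FF_q^2 $, so that the hypothesis of no rational $ q $-isogeny reads $ V^G = 0 $. The matrix $ T = \sum_{M \in G} M \in \M_2\br{\FF_q} $ satisfies $ hT = T $ for every $ h \in G $, hence $ \im T \subseteq V^G = 0 $ and $ T = 0 $; in particular $ \sum_{M \in G} \tr\br{M} = 0 $ in $ \FF_q $. Similarly, the determinant $ \det : G \to \FF_q^\times $ is the mod-$ q $ cyclotomic character, which is surjective and hence a nontrivial abelian character of $ G $, so $ \sum_{M \in G} \det\br{M} = 0 $ in $ \FF_q $ by the vanishing of a nontrivial character summed over a finite group. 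Combining,
$$ \sum_{M \in G} \br{1 - \tr\br{M} + \det\br{M}} \equiv |G| \mod q, $$
which is nonzero in $ \FF_q $ whenever $ q \nmid |G| $, delivering the required $ M $. When $ q \mid |G| $, Cauchy's theorem provides an element of order $ q $, which must be a transvection, and Dickson's classification of subgroups of $ \GL_2\br{\FF_q} $ then forces $ \SL_2\br{\FF_q} \subseteq G $ (with the cases $ q \in \cbr{3, 5} $ verified directly). Any element of $ \SL_2\br{\FF_q} $ of trace different from $ 2 $ then has $ \det\br{I - M} \ne 0 $.

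For part (2), the assumption $ \ord_q\br{\LLL\br{E}} = \ord_q\br{\BSD\br{E}} $ together with $ \c_0\br{E} \in \ZZ $ reduces the claim to $ \ord_q\br{\BSD\br{E}} \ge -1 $, which is immediate when $ q \nmid \#\tor\br{E} $. When $ q \mid \#\tor\br{E} $ and $ q > 3 $, Mazur's torsion theorem forces $ \ord_q\br{\#\tor\br{E}} = 1 $, and Lorenzini's divisibility result provides $ q \mid \Tam\br{E} $ outside a finite list of positive-rank exceptional curves, which are ruled out by $ \L\br{E, 1} \ne 0 $. For $ q = 3 $ with cyclic $ 3 $-part in $ \tor\br{E} $, Proposition \ref{prop:divide} applies directly. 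The remaining case $ \ZZ / 9\ZZ \subseteq \tor\br{E} $ forces $ \im\rho_{E, 3} $ to be 9.72.0.5, and the attendant constraints on the Kodaira types yield the stronger divisibility $ 9 \mid \c_0\br{E}\Tam\br{E}\#\Sha\br{E} $.

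The chief obstacle is the degenerate case $ q \mid |G| $ in part (1), where the clean averaging over $ \FF_q $ collapses and one must import structural information on subgroups of $ \GL_2\br{\FF_q} $. In part (2), the analogous subtlety is the $ \ZZ / 9\ZZ $-torsion case at $ q = 3 $, which lies just outside the scope of Proposition \ref{prop:divide} and requires a direct inspection of the tightly constrained family of elliptic curves carrying a rational $ 9 $-torsion point.
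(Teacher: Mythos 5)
Part (1) of your proposal is correct but takes a genuinely different route from the paper. The paper argues by contradiction: if $\tr\br{M} = 1 + \det\br{M}$ for every $M$ in the image, then every element of $\im\overline{\rho_{E, q}} \cap \SL\br{q}$ has trace $2$, so by the conjugacy class table this intersection is a $q$-group, and Serre's dichotomy (contained in a Borel, or containing $\SL\br{q}$) is contradicted on both branches. Your averaging argument — $\sum_M M = 0$ when there is no nonzero fixed vector, $\sum_M \det\br{M} = 0$ because the determinant is a nontrivial character, hence $\sum_M \br{1 - \tr\br{M} + \det\br{M}} \equiv \#G \bmod q$ — produces the required Frobenius class directly whenever $q \nmid \#G$, which is arguably cleaner, and your degenerate case $q \mid \#G$ lands on the same structural input (Dickson/Serre). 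Two imprecisions to repair: having no rational $q$-isogeny is strictly stronger than $V^G = 0$ (it forbids any stable line, not just fixed vectors), and it is this stronger hypothesis, not $V^G = 0$, that you must invoke to exclude the Borel branch of Dickson before concluding $\SL_2\br{\FF_q} \subseteq G$; as written that exclusion is silent.

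Part (2) has a genuine gap. Your reduction to showing $\ord_q\br{\BSD\br{E}} \ge -1$ discards the Manin constant, and that inequality is simply false: the curve 11a3 has $\BSD\br{E} = \tfrac{1}{25}$, so $\ord_5\br{\BSD\br{E}} = -2$, and the curves 27a3, 27a4, 54a3, 14a4, 14a6 all have $\ord_3\br{\BSD\br{E}} = -2$. All of these are rank-zero curves with $\L\br{E, 1} \ne 0$, so your attempt to dismiss Lorenzini's exceptional curves as positive-rank and hence excluded by $\L\br{E, 1} \ne 0$ does not work: the exceptions (11a3 at $q = 5$; 14a4 and 14a6 at $q = 3$) are precisely rank zero, and the theorem holds for them only because $\ord_q\br{\c_0\br{E}} = 1$ there — which is why the statement, and the paper's proof, keep $\c_0\br{E}$ throughout and only claim $\ord_q\br{\c_0\br{E}\BSD\br{E}} \ge -1$. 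Relatedly, Proposition \ref{prop:divide} assumes $\tor\br{E} \cong \ZZ / 3\ZZ$ exactly, not merely cyclic $3$-part, so it does not cover torsion $\ZZ / 6\ZZ$, $\ZZ / 12\ZZ$, or $\ZZ / 2\ZZ \times \ZZ / 6\ZZ$ (and 14a4, 14a6 show that $3 \mid \Tam\br{E}\#\Sha\br{E}$ genuinely fails in that range); moreover its conclusion is $3 \mid \c_0\br{E}\Tam\br{E}\#\Sha\br{E}$, again involving $\c_0\br{E}$, so even where it applies it yields $\ord_3\br{\c_0\br{E}\BSD\br{E}} \ge -1$ rather than your intermediate claim. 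Finally, in the $\ZZ / 9\ZZ$ case your asserted divisibility $9 \mid \c_0\br{E}\Tam\br{E}\#\Sha\br{E}$ is quantitatively insufficient — since $\#\tor\br{E}^2 = 81$ you would need $\ord_3\br{\c_0\br{E}\Tam\br{E}\#\Sha\br{E}} \ge 3$ — and the appeal to "constraints on the Kodaira types" is not an argument; the paper instead disposes of every torsion group other than $\ZZ / 3\ZZ$ by Lorenzini's case-by-case divisibility results, tracking $\c_0\br{E}$ for the exceptional curves, and reserves Proposition \ref{prop:divide} for the single case $\tor\br{E} \cong \ZZ / 3\ZZ$.
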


\begin{proof}
For the first statement, by Proposition \ref{prop:untwisted}, it suffices to find an odd prime $ p \nmid N $ such that $ q \nmid \#E\br{\FF_p} $, so by Chebotarev's density theorem this reduces to finding a matrix $ M \in \im\overline{\rho_{E, q}} $ such that $ \tr\br{M} \ne 1 + \det\br{M} $. Suppose otherwise that $ \tr\br{M} = 1 + \det\br{M} $ for all $ M \in \im\overline{\rho_{E, q}} $, so in particular $ \tr\br{M} = 2 $ for all $ M \in \im\overline{\rho_{E, q}} \cap \SL\br{q} $. In this case, by inspecting the order in each conjugacy class of $ \SL\br{q} $, $ \im\overline{\rho_{E, q}} \cap \SL\br{q} $ is necessarily a $ q $-group, so in particular $ q \mid \#\im\overline{\rho_{E, q}} $. Then either $ \im\overline{\rho_{E, q}} $ is contained in a Borel subgroup of $ \GL\br{q} $ or $ \im\overline{\rho_{E, q}} $ contains $ \SL\br{q} $ \cite[Proposition 15]{Ser72}. The former contradicts the assumption that $ E $ has no rational $ q $-isogeny, and the latter is impossible by comparing orders.

For the second statement, the assumption on the $ q $-adic valuations reduces the statement to proving that $ \ord_q\br{\c_0\br{E}\BSD\br{E}} \ge -1 $. By Mazur's torsion theorem, it suffices to consider $ \tor\br{E} $ being one of
$$ \ZZ / 3\ZZ, \ \ZZ / 5\ZZ, \ \ZZ / 6\ZZ, \ \ZZ / 7\ZZ, \ \ZZ / 9\ZZ, \ \ZZ / 10\ZZ, \ \ZZ / 12\ZZ, \ \ZZ / 2\ZZ \times \ZZ / 6\ZZ, $$
since $ q $ is odd. If $ E\br{\QQ} \not\cong \ZZ / 3\ZZ $, then a case-by-case analysis of $ q $ yields $ \ord_q\br{\Tam\br{E}} \ge \ord_q\br{\#\tor\br{E}} $ except for the elliptic curve 11a3 with $ q = 5 $, and the elliptic curves 14a4 and 14a6 with $ q = 3 $ \cite[Proposition 1.1]{Lor11}, but these exceptions all have $ \ord_q\br{\c_0\br{E}} = 1 $ and $ \ord_q\br{\BSD\br{E}} = -2 $. If $ E\br{\QQ} \cong \ZZ / 3\ZZ $, then Proposition \ref{prop:divide} implies that $ \ord_3\br{\BSD\br{E}} = \ord_3\br{\c_0\br{E}\Tam\br{E}\#\Sha\br{E}} - 2 \ge -1 $.
\end{proof}

\begin{remark}
The assumption on the $ q $-part of the Birch--Swinnerton-Dyer conjecture in the second statement can be slightly weakened, by only requiring that $ \ord_q\br{\LLL\br{E}} \ge \ord_q\br{\BSD\br{E}} $ for all $ E $, except for when $ \im\rho_{E, 3} $ is 9.72.0.1, where the assumption $ \ord_q\br{\LLL\br{E}} \le \ord_q\br{\BSD\br{E}} $ is also needed to proceed with the argument in Proposition \ref{prop:divide}. The second statement might also be provable without appealing to the $ q $-part of the Birch--Swinnerton-Dyer conjecture when $ q > 3 $, by finding a matrix $ M \in \im\rho_{E, q} $ such that $ 1 + \det\br{M} - \tr\br{M} \equiv q \mod q^2 $ along the same lines as the proof of Proposition \ref{prop:divide}. In general, this would need a case-by-case analysis of $ \im\rho_{E, q} $ for when $ E $ has no rational $ q $-isogeny, which seems to be unsolved.
\end{remark}

\begin{remark}
A lower bound for the $ 2 $-adic valuation of $ \c_0\br{E}\LLL\br{E} $ should also be achievable by arguing along the same lines, using the $ n = 2 $ version of Proposition \ref{prop:untwisted} and the classification of $ \im\rho_{E, 2} $ if necessary. This will not be needed for the rest of the paper and will not be discussed further.
\end{remark}

The following is an easy result on the $ q $-adic valuation of $ \LLL\br{E}\#E\br{\FF_p} $. The factors arising from the denominator of the rational number $ \LLL\br{E}\#E\br{\FF_p} $ could a priori cancel the factors appearing in $ \c_0\br{E} $, but the congruence of L-values says that this should not happen under Stevens's conjecture that $ \c_1\br{E} = 1 $.

\begin{proposition}
\label{prop:valuation}
Let $ E $ be an elliptic curve of conductor $ N $, and let $ p \nmid N $ and $ q $ be odd primes such that $ p \equiv 1 \mod q $. Assume that $ \c_1\br{E} = 1 $. Then
$$ q \nmid \c_0\br{E}\LLL\br{E}\#E\br{\FF_p} \qquad \iff \qquad q \nmid \c_0\br{E} \ \text{and} \ \ord_q\br[1]{\LLL\br{E}\#E\br{\FF_p}} = 0. $$
\end{proposition}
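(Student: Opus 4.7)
The plan is as follows. The reverse implication is immediate from Proposition \ref{prop:untwisted}: since $ \c_0\br{E}\LLL\br{E}\#E\br{\FF_p} \in \ZZ $, the hypotheses $ q \nmid \c_0\br{E} $ and $ \ord_q\br{\LLL\br{E}\#E\br{\FF_p}} = 0 $ combine by additivity of $ \ord_q $ to give $ \ord_q\br{\c_0\br{E}\LLL\br{E}\#E\br{\FF_p}} = 0 $.

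For the forward implication, I would argue by contrapositive: assume $ q \mid \c_0\br{E} $ and aim to show $ q \mid \c_0\br{E}\LLL\br{E}\#E\br{\FF_p} $. Since $ p \equiv 1 \mod q $, the cyclic group $ \br{\ZZ / p\ZZ}^\times $ contains an element of order $ q $, so there exists a primitive Dirichlet character $ \chi $ of conductor $ p $ and order $ q $. Because $ q $ is odd, $ \chi\br{-1} $ has order dividing $ \gcd\br{2, q} = 1 $, so $ \chi $ is even, and hence $ \chi $ qualifies as a character in the sense of this paper. Combined with the hypothesis $ \c_1\br{E} = 1 $, the final statement of Proposition \ref{prop:twisted} then yields $ \LLL\br{E, \chi} \in \ZZ\sbr{\zeta_q} $.

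Next, apply Corollary \ref{cor:congruence} with the odd prime $ p \nmid N $, which gives the congruence
$$ \c_0\br{E}\LLL\br{E, \chi} \equiv -\c_0\br{E}\LLL\br{E}\#E\br{\FF_p} \mod \abr{1 - \zeta_q}. $$
Since $ \LLL\br{E, \chi} \in \ZZ\sbr{\zeta_q} $ and $ q \mid \c_0\br{E} $, the left-hand side lies in $ q\ZZ\sbr{\zeta_q} \subseteq \abr{1 - \zeta_q} $, the inclusion holding because $ q $ is totally ramified in $ \ZZ\sbr{\zeta_q} $ with $ \br{q} = \abr{1 - \zeta_q}^{q - 1} $ up to a unit. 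Therefore $ \c_0\br{E}\LLL\br{E}\#E\br{\FF_p} $, which is an integer, reduces to zero modulo $ \abr{1 - \zeta_q} $, and since $ \abr{1 - \zeta_q} \cap \ZZ = q\ZZ $, it is divisible by $ q $ as desired. Given the assumption $ q \nmid \c_0\br{E}\LLL\br{E}\#E\br{\FF_p} $, this forces $ q \nmid \c_0\br{E} $, and then $ \ord_q\br{\LLL\br{E}\#E\br{\FF_p}} = 0 $ follows by additivity.

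The argument is a direct application of machinery already assembled in the previous section, so there is no serious obstacle. The one point worth emphasising is that Stevens's hypothesis $ \c_1\br{E} = 1 $ is essential: it upgrades the a priori statement $ \c_0\br{E}\LLL\br{E, \chi} \in \ZZ\sbr{\zeta_q} $ from the first part of Proposition \ref{prop:twisted} to $ \LLL\br{E, \chi} \in \ZZ\sbr{\zeta_q} $, without which the left-hand side of the congruence would not obviously inherit divisibility by $ q $ from $ \c_0\br{E} $.
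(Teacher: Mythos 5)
Your proposal is correct and follows essentially the same route as the paper: both directions rest on the integrality of $ \c_0\br{E}\LLL\br{E}\#E\br{\FF_p} $, and the forward direction uses $ p \equiv 1 \bmod q $ to produce an even character $ \chi $ of conductor $ p $ and order $ q $, then combines $ \LLL\br{E, \chi} \in \ZZ\sbr{\zeta_q} $ (from Proposition \ref{prop:twisted} under $ \c_1\br{E} = 1 $) with the congruence of Corollary \ref{cor:congruence} to rule out $ q \mid \c_0\br{E} $. Your explicit verification that such a $ \chi $ exists and is even is a detail the paper leaves implicit, but the argument is the same.
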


\begin{proof}
Assume first that $ q \nmid \c_0\br{E}\LLL\br{E}\#E\br{\FF_p} $ but $ q \mid \c_0\br{E} $. By the assumption that $ \c_1\br{E} = 1 $, Proposition \ref{prop:twisted} says that $ \LLL\br{E, \chi} \in \ZZ\sbr{\zeta_q} $ for any character $ \chi $ of conductor $ p $ and order $ q $, so $ \c_0\br{E}\LLL\br{E, \chi} \equiv 0 \mod \abr{1 - \zeta_q} $, which contradicts $ q \nmid \c_0\br{E}\LLL\br{E}\#E\br{\FF_p} $ by Corollary \ref{cor:congruence}. Thus $ q \nmid \c_0\br{E} $, so that $ \ord_q\br[1]{\LLL\br{E}\#E\br{\FF_p}} = 0 $ also follows, while the converse is immediate noting that $ \LLL\br{E} \ne 0 $.
\end{proof}

\begin{remark}
Assuming Stevens's conjecture, Proposition \ref{prop:valuation} yields an immediate proof that $ \LLL\br{E}\#E\br{\FF_p} $ is integral at $ q $ if $ \ord_q\br{\c_0\br{E}} \le 1 $. This condition seems to hold for all elliptic curves in the LMFDB \cite{Col}, but a proof remains elusive. On the other hand, assuming the $ q $-part of the Birch--Swinnerton-Dyer conjecture, there might be a direct proof that $ \LLL\br{E}\#E\br{\FF_p} $ is integral at $ q $, by arguing that $ 1 + \det\br{M} - \tr\br{M} $ cancels $ \#\tor\br{E}^2 $ for every matrix $ M $ lying in every possible $ \im\rho_{E, q} $. Again, this will be omitted here.
\end{remark}

\pagebreak

\section{Units of twisted L-values}

\label{sec:unit}

Under standard arithmetic conjectures, Dokchitser--Evans--Wiersema computed the norm of $ \LLL\br{E, \chi} $ in terms of $ \BSD\br{E} $ and $ \BSD\br[0]{E / K} $, where $ K $ is the degree $ q $ subfield of $ \QQ\br{\zeta_p} $ cut out by the kernel of $ \chi $ \cite[Theorem 38]{DEW21}. Some of their results can be summarised in the notation of this paper as follows.

\begin{proposition}
\label{prop:bsd}
Let $ E $ be an elliptic curve of conductor $ N $, and let $ \chi $ be a character of odd prime conductor $ p \nmid N $ and odd prime order $ q \nmid \c_0\br{E}\BSD\br{E}\#E\br{\FF_p} $. Assume that $ \c_1\br{E} = 1 $, and that $ \LLL\br{E} = \BSD\br{E} $ and $ \LLL\br[0]{E / K} = \BSD\br[0]{E / K} $. Furthermore, set $ \zeta \coloneqq \chi\br{N}^{\br{q - 1} / 2} $.
\begin{enumerate}
\item $ \LLL\br{E, \chi} \in \ZZ\sbr{\zeta_q} $ generates an ideal invariant under complex conjugation, and has norm
$$ \Nm_q\br{\LLL\br{E, \chi}} = \dfrac{\BSD\br[0]{E / K}}{\BSD\br{E}}. $$
\item $ \LLL\br{E, \chi} \cdot \zeta \in \ZZ\sbr{\zeta_q}^+ $, and has norm
$$ \abs{\Nm_q^+\br{\LLL\br{E, \chi} \cdot \zeta}} = \dfrac{\sqrt{\BSD\br[0]{E / K}}}{\sqrt{\BSD\br{E}}}. $$
\end{enumerate}
In particular, if $ \BSD\br{E} = \BSD\br[0]{E / K} $, then there is a unit $ u \in \ZZ\sbr{\zeta_q}^+ $ such that $ \LLL\br{E, \chi} = u \cdot \zeta^{-1} $.
\end{proposition}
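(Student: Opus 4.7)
The plan is to treat parts (1) and (2) as direct repackagings of Dokchitser--Evans--Wiersema's Theorem 38 \cite{DEW21}, and then deduce the final unit statement in one line from part (2).

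For part (1), the integrality $ \LLL\br{E, \chi} \in \ZZ\sbr{\zeta_q} $ is immediate from Proposition \ref{prop:twisted} under Stevens's conjecture. The norm formula comes from Artin formalism: decomposing $ \L\br[0]{E / K, s} = \L\br{E, s}\prod_{a = 1}^{q - 1} \L\br{E, \chi^a, s} $, evaluating at $ s = 1 $, and using the assumed BSD equalities $ \LLL\br{E} = \BSD\br{E} $ and $ \LLL\br[0]{E / K} = \BSD\br[0]{E / K} $, one obtains $ \BSD\br[0]{E / K}/\BSD\br{E} = \prod_{a = 1}^{q - 1} \LLL\br{E, \chi^a} $ after the period, Gauss sum, and conductor discriminant contributions are tracked, which is exactly the bookkeeping carried out in \cite[Section 1.5, Theorem 38]{DEW21}. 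The product on the right equals $ \Nm_q\br{\LLL\br{E, \chi}} $ by Galois equivariance (Remark \ref{rem:equality}). The ideal invariance under complex conjugation then follows from the functional equation of $ \L\br{E, \chi, s} $: the assumption $ \L\br{E, 1} \ne 0 $ forces the root number $ w\br{E} = +1 $, and unpacking this yields $ \LLL\br{E, \chi} = \chi\br{N}\LLL\br{E, \overline{\chi}} $, so $ \LLL\br{E, \chi} $ and $ \overline{\LLL\br{E, \chi}} = \LLL\br{E, \overline{\chi}} $ differ by the cyclotomic unit $ \chi\br{N} $.

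For part (2), combine the two identities above. The relation $ \overline{\LLL\br{E, \chi}} = \chi\br{N}^{-1}\LLL\br{E, \chi} $, together with $ \chi\br{N}^q = 1 $ forcing $ \chi\br{N} = \zeta^{-2} $ from $ \zeta = \chi\br{N}^{\br{q - 1}/2} $, gives
$$ \overline{\LLL\br{E, \chi} \cdot \zeta} = \chi\br{N}^{-1}\LLL\br{E, \chi} \cdot \zeta^{-1} = \zeta^2 \cdot \LLL\br{E, \chi} \cdot \zeta^{-1} = \LLL\br{E, \chi} \cdot \zeta, $$
so $ \LLL\br{E, \chi} \cdot \zeta $ lies in $ \ZZ\sbr{\zeta_q} \cap \RR = \ZZ\sbr{\zeta_q}^+ $. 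Since $ \Nm_q\br{\zeta} = 1 $ for odd $ q $, part (1) gives $ \Nm_q\br{\LLL\br{E, \chi} \cdot \zeta} = \BSD\br[0]{E / K}/\BSD\br{E} $, and because $ \QQ\br{\zeta_q}/\QQ\br{\zeta_q}^+ $ has degree two the identity $ \Nm_q\br{x} = \Nm_q^+\br{x}^2 $ for $ x \in \ZZ\sbr{\zeta_q}^+ $ yields $ \abs{\Nm_q^+\br{\LLL\br{E, \chi} \cdot \zeta}} = \sqrt{\BSD\br[0]{E / K}}/\sqrt{\BSD\br{E}} $ after taking a square root. Finally, if $ \BSD\br{E} = \BSD\br[0]{E / K} $ then this absolute norm is $ 1 $, so $ u \coloneqq \LLL\br{E, \chi} \cdot \zeta $ is a unit in $ \ZZ\sbr{\zeta_q}^+ $ and $ \LLL\br{E, \chi} = u \cdot \zeta^{-1} $.

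The main obstacle is not any step of the above but rather reconciling the normalisation constants (periods, Gauss sums, Artin conductor discriminant) that arise in passing from the analytic Artin decomposition of $ \L\br[0]{E / K, s} $ to a clean equality between $ \prod_a \LLL\br{E, \chi^a} $ and a quotient of BSD quantities, together with pinning down the precise functional equation sign as $ \chi\br{N} $. Both steps are delicate but standard, and since \cite[Theorem 38]{DEW21} performs them in full, the proof here reduces to the short algebraic computation above hinging on $ \chi\br{N} = \zeta^{-2} $.
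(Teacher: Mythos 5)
There is a genuine gap: you never verify the hypotheses under which the Dokchitser--Evans--Wiersema results you invoke actually apply, and that verification is the entire content of the paper's proof. The paper does not assume $ \L\br{E, 1} \ne 0 $, yet you use it ("the assumption $ \L\br{E, 1} \ne 0 $ forces the root number\dots"), and more importantly you never establish $ \L\br{E, \chi, 1} \ne 0 $. Both non-vanishing statements are needed: the clean norm formula $ \Nm_q\br{\LLL\br{E, \chi}} = \BSD\br[0]{E / K} / \BSD\br{E} $ and the relation $ \LLL\br{E, \chi} = \chi\br{N}\LLL\br{E, \overline{\chi}} $ with sign $ +1 $ are statements about central \emph{values}; if $ \L\br{E, \chi, 1} $ vanished, $ \LLL\br{E, \chi} $ would be a higher-order leading term, $ E / K $ would (under BSD) have positive rank, $ \BSD\br[0]{E / K} $ would involve a nontrivial regulator, and the Artin-formalism bookkeeping from \cite[Theorem 38]{DEW21} no longer yields the stated identity. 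A telltale sign of the gap is that your argument never uses the hypothesis $ q \nmid \c_0\br{E}\BSD\br{E}\#E\br{\FF_p} $: that hypothesis is there precisely to supply the non-vanishing.

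The paper's route is: by Proposition \ref{prop:valuation} (under $ \c_1\br{E} = 1 $ and the assumed BSD equalities), $ q \nmid \c_0\br{E}\BSD\br{E}\#E\br{\FF_p} $ gives $ q \nmid \c_0\br{E} $ and $ \ord_q\br[1]{\LLL\br{E}\#E\br{\FF_p}} = 0 $, so $ \L\br{E, 1} \ne 0 $; then Corollary \ref{cor:congruence} gives $ \LLL\br{E, \chi} \equiv -\LLL\br{E}\#E\br{\FF_p} \not\equiv 0 \mod \abr{1 - \zeta_q} $, hence $ \ord_q\br{\LLL\br{E, \chi}} = 0 $ and in particular $ \L\br{E, \chi, 1} \ne 0 $; with these hypotheses in hand the proposition is a restatement of \cite[Theorem 13(5) to Theorem 13(12)]{DEW21}. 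Your algebra deducing part (2) from part (1) (realness of $ \LLL\br{E, \chi} \cdot \zeta $ via $ \chi\br{N} = \zeta^{-2} $, the relation $ \Nm_q = \br{\Nm_q^+}^2 $ on $ \ZZ\sbr{\zeta_q}^+ $, and the final unit statement) is correct and consistent with the intended statement, but it sits downstream of the missing non-vanishing step, so as written the proof is incomplete.
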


\begin{proof}
By Proposition \ref{prop:valuation}, under the arithmetic conjectures, the assumption that $ q \nmid \c_0\br{E}\BSD\br{E}\#E\br{\FF_p} $ reduces to $ q \nmid \c_0\br{E} $ and $ \ord_q\br[1]{\LLL\br{E}\#E\br{\FF_p}} = 0 $. In particular $ \L\br{E, 1} \ne 0 $, and moreover $ \ord_q\br{\LLL\br{E, \chi}} = 0 $ by Corollary \ref{cor:congruence}, so $ \L\br{E, \chi, 1} \ne 0 $ as well. This verifies the assumptions of a result by Dokchitser--Evans--Wiersema \cite[Theorem 13(5) to Theorem 13(12)]{DEW21}, and is a restatement of it.
\end{proof}

Proposition \ref{prop:bsd}.1 predicts that ideal $ I $ of $ \ZZ\sbr{\zeta_q} $ generated by $ \LLL\br{E, \chi} $ factorises into a product of prime ideals whose norms are factors of $ \BSD\br[0]{E / K} / \BSD\br{E} $, and there are only finitely many of these. The resulting product ideal $ I $ is necessarily invariant under complex conjugation, which narrows down the possibilities further. The exact prime factorisation of $ I $, and hence of $ \LLL\br{E, \chi} $, can be determined from the $ \Gal\br{K / \QQ} $-module structure of $ \Sha\br[0]{E / K} $ \cite[Remark 7.4]{BC21}, but this will not be discussed here.

Assuming that $ I $ has been computed as an abstract ideal of $ \ZZ\sbr{\zeta_q} $, any generator of $ I $ is only equal to the actual value of $ \LLL\br{E, \chi} $ up to a unit $ u \in \ZZ\sbr{\zeta_q} $. Proposition \ref{prop:bsd}.2 refines this prediction slightly by adding a condition on the norm of $ \LLL\br{E, \chi} \cdot \zeta $, which determines the actual value of $ \LLL\br{E, \chi} $ up to a unit $ u \in \ZZ\sbr{\zeta_q}^+ $. In the special case of $ q = 3 $, this is still ambiguous up to a sign, since the units of $ \ZZ\sbr{\zeta_3}^+ = \ZZ $ are $ \pm1 $. Corollary \ref{cor:congruence} comes into the picture by pinning down the sign in terms of $ \#E\br{\FF_p} $.

\begin{corollary}
\label{cor:cubic}
Let $ E $ be an elliptic curve of conductor $ N $, and let $ \chi $ be a cubic character of odd prime conductor $ p \nmid N $ such that $ 3 \nmid \c_0\br{E}\BSD\br{E}\#E\br{\FF_p} $. Assume that $ \c_1\br{E} = 1 $, and that $ \LLL\br{E} = \BSD\br{E} $ and $ \LLL\br[0]{E / K} = \BSD\br[0]{E / K} $. Then
$$ \LLL\br{E, \chi} = u \cdot \overline{\chi\br{N}}\dfrac{\sqrt{\BSD\br[0]{E / K}}}{\sqrt{\BSD\br{E}}}, $$
for some sign $ u = \pm1 $, chosen such that
$$ u \equiv -\#E\br{\FF_p}\dfrac{\sqrt{\BSD\br{E}}^3}{\sqrt{\BSD\br[0]{E / K}}} \mod 3. $$
\end{corollary}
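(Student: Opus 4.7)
The plan is to combine the two main inputs already established: the norm-plus-parity analysis of Proposition \ref{prop:bsd} (which pins $\LLL(E,\chi)$ down to a sign in the cubic case) and the explicit mod-$\abr{1-\zeta_3}$ congruence of Corollary \ref{cor:congruence} (which determines that sign).

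First I would specialise Proposition \ref{prop:bsd} to $q = 3$, where $\zeta = \chi(N)^{(3-1)/2} = \chi(N)$ and $\ZZ\sbr{\zeta_3}^+ = \ZZ$. Part (2) of that proposition then says that $\LLL(E,\chi) \cdot \chi(N)$ is a rational integer of absolute value $\sqrt{\BSD\br[0]{E/K}}/\sqrt{\BSD(E)}$. Multiplying by $\overline{\chi(N)}$ and using $\chi(N)\overline{\chi(N)} = 1$ immediately gives
$$ \LLL(E,\chi) \;=\; u \cdot \overline{\chi(N)} \dfrac{\sqrt{\BSD\br[0]{E/K}}}{\sqrt{\BSD(E)}} $$
for some sign $u = \pm 1$, which is the first displayed formula. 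This step is essentially formal once Proposition \ref{prop:bsd} is invoked.

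Next I would pin down $u$ modulo $3$ via the congruence. Since $3 \nmid \c_0(E)$ (a consequence of $3 \nmid \c_0(E)\BSD(E)\#E(\FF_p)$), Corollary \ref{cor:congruence} applies to give
$$ \LLL(E,\chi) \;\equiv\; -\LLL(E)\,\#E(\FF_p) \pmod{\abr{1-\zeta_3}}, $$
and the BSD assumption replaces $\LLL(E)$ by $\BSD(E)$. Since $\chi$ is cubic, $\overline{\chi(N)}$ is a cube root of unity and hence $\overline{\chi(N)} \equiv 1 \pmod{\abr{1-\zeta_3}}$. Substituting the formula for $\LLL(E,\chi)$ from the first step reduces the congruence to the rational congruence
$$ u \cdot \dfrac{\sqrt{\BSD\br[0]{E/K}}}{\sqrt{\BSD(E)}} \;\equiv\; -\BSD(E)\,\#E(\FF_p) \pmod{3}. $$

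Finally I would solve for $u$. To rearrange, one needs $\sqrt{\BSD\br[0]{E/K}}/\sqrt{\BSD(E)}$ to be invertible mod $3$; this is where a small verification enters. The ratio is a positive integer by the norm formula, and by Proposition \ref{prop:bsd}(1) its square equals $\BSD\br[0]{E/K}/\BSD(E) = \Nm_3(\LLL(E,\chi))$, which is coprime to $3$ since $\LLL(E,\chi)$ is a unit at $\abr{1-\zeta_3}$ (the right-hand side $-\BSD(E)\#E(\FF_p)$ of the congruence is nonzero mod $3$ by hypothesis). Combined with $3 \nmid \BSD(E)$, multiplying both sides of the congruence by $\sqrt{\BSD(E)}^{\,3}/\sqrt{\BSD\br[0]{E/K}}$ yields the stated formula for $u$.

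The only non-routine point is the invertibility check in the final step, and this is immediate once one observes that the hypothesis $3 \nmid \BSD(E)\#E(\FF_p)$ forces $\LLL(E,\chi)$ to be a unit at $\abr{1-\zeta_3}$; everything else is direct substitution.
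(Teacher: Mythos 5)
Your proposal is correct and follows exactly the paper's route: the paper also deduces Corollary \ref{cor:cubic} directly from Proposition \ref{prop:bsd} (specialised to $q = 3$, where the unit in $\ZZ\sbr{\zeta_3}^+ = \ZZ$ is a sign) together with the congruence of Corollary \ref{cor:congruence}, which pins down that sign. Your extra verification that $\sqrt{\BSD\br[0]{E/K}}/\sqrt{\BSD\br{E}}$ is invertible modulo $3$ is a detail the paper leaves implicit, and you handle it correctly.
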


This follows immediately from Corollary \ref{cor:congruence} and Proposition \ref{prop:bsd}. Corollary \ref{cor:cubic} clarifies much of the phenomena observed by Dokchitser--Evans--Wiersema \cite[Example 45]{DEW21}, where they gave many pairs of examples of arithmetically similar elliptic curves $ E_1 $ and $ E_2 $ with $ \LLL\br{E_1, \chi} \ne \LLL\br{E_2, \chi} $ for a few cubic characters $ \chi $, in the sense that $ \LLL\br{E_1, \chi} \ne \LLL\br{E_2, \chi} $ precisely because $ \#E_1\br{\FF_p} \not\equiv \#E_2\br{\FF_p} \mod 3 $.

\pagebreak

\begin{example}
Let $ E_1 $ and $ E_2 $ be the elliptic curves 1356d1 and 1356f1 respectively, and let $ \chi $ be the cubic character of conductor $ 7 $ such that $ \chi\br{3} = \zeta_3^2 $. Then $ \c_0\br{E_i} = \BSD\br{E_i} = \BSD\br{E_i / K} = 1 $ for $ i = 1, 2 $, so Proposition \ref{prop:bsd} implies that $ \LLL\br{E_i, \chi} = \pm\overline{\chi\br{1356}} = \pm\zeta_3^2 $, but it was a priori unclear why $ \LLL\br{E_1, \chi} = \zeta_3^2 $ and $ \LLL\br{E_2, \chi} = -\zeta_3^2 $. Corollary \ref{cor:cubic} explains this by requiring that this sign agrees with $ -\#E_i\br{\FF_7} $ modulo $ 3 $, and in this case $ \#E_1\br{\FF_7} = 11 $ and $ \#E_2\br{\FF_7} = 7 $, which are distinct modulo $ 3 $.

They provided many other examples satisfying $ \c_0\br{E} = \BSD\br{E} = \BSD\br[0]{E / K} = 1 $ with different $ \LLL\br{E, \chi} $ for a few different cubic characters $ \chi $, and they can all be explained similarly. For reference and comparison, the values of $ \LLL\br{E, \chi} $ for the above character and of $ \#E\br{\FF_7} $ are tabulated as follows.

\vspace{0.2cm}

\begin{tabular}{|c|cc|cc|cc|ccc|}
\hline
$ E $ & 1356d1 & 1356f1 & 3264r1 & 3264s1 & 3540a1 & 3540b1 & 4800i1 & 4800bj1 & 4800bm1 \\
\hline
$ \LLL\br{E, \chi} $ & $ \zeta_3^2 $ & $ -\zeta_3^2 $ & $ -\zeta_3^2 $ & $ \zeta_3^2 $ & $ -\zeta_3^2 $ & $ \zeta_3^2 $ & $ -\zeta_3^2 $ & $ -\zeta_3^2 $ & $ \zeta_3^2 $ \\
\hline
$ \#E\br{\FF_7} $ & 11 & 7 & 10 & 8 & 7 & 11 & 7 & 7 & 11 \\
\hline
\end{tabular}
\end{example}

When $ q \ne 3 $ but $ \BSD\br{E} = \BSD\br[0]{E / K} $, Proposition \ref{prop:bsd} says that $ \LLL\br{E, \chi} $ is a unit in $ \ZZ\sbr{\zeta_q} $, and Corollary \ref{cor:congruence} places an explicit congruence on this unit in terms of $ \#E\br{\FF_p} $.

\begin{corollary}
\label{cor:unit}
Let $ E $ be an elliptic curve of conductor $ N $, and let $ \chi $ be a character of odd prime conductor $ p \nmid N $ and odd prime order $ q \nmid \c_0\br{E}\BSD\br{E}\#E\br{\FF_p} $ such that $ \BSD\br{E} = \BSD\br[0]{E / K} $. Assume that $ \c_1\br{E} = 1 $, and that $ \LLL\br{E} = \BSD\br{E} $ and $ \LLL\br[0]{E / K} = \BSD\br[0]{E / K} $. Then $ \LLL\br{E, \chi} = u $ for some unit $ u \in \ZZ\sbr{\zeta_q} $, chosen such that $ u \equiv -\#E\br{\FF_p}\BSD\br{E} \mod \abr{1 - \zeta_q} $.
\end{corollary}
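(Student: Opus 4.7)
The plan is to derive Corollary \ref{cor:unit} as an immediate packaging of Proposition \ref{prop:bsd} with Corollary \ref{cor:congruence}, using the extra hypothesis $\BSD\br{E} = \BSD\br[0]{E / K}$ to collapse the norm information into a unit statement.

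First I would verify the hypotheses of Proposition \ref{prop:bsd}: the character $\chi$ has odd prime conductor $p \nmid N$ and odd prime order $q$, the divisibility $q \nmid \c_0\br{E}\BSD\br{E}\#E\br{\FF_p}$ is assumed, Stevens's conjecture $\c_1\br{E} = 1$ is in force, and the relevant $q$-parts of the Birch--Swinnerton-Dyer conjecture are assumed via $\LLL\br{E} = \BSD\br{E}$ and $\LLL\br[0]{E / K} = \BSD\br[0]{E / K}$. The final clause of Proposition \ref{prop:bsd}, applied under the additional assumption $\BSD\br{E} = \BSD\br[0]{E / K}$, then produces a unit $v \in \ZZ\sbr{\zeta_q}^+$ with $\LLL\br{E, \chi} = v \cdot \zeta^{-1}$, where $\zeta = \chi\br{N}^{\br{q - 1} / 2}$. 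Since $\zeta^{-1}$ is a root of unity and therefore itself a unit in $\ZZ\sbr{\zeta_q}$, the value $u \coloneqq \LLL\br{E, \chi}$ is a unit in $\ZZ\sbr{\zeta_q}$, establishing the first claim.

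For the congruence, I would apply Corollary \ref{cor:congruence}. The hypothesis $q \nmid \c_0\br{E}\BSD\br{E}\#E\br{\FF_p}$ implies in particular that $q \nmid \c_0\br{E}$, so the final bullet of that corollary directly yields
$$ \LLL\br{E, \chi} \equiv -\LLL\br{E}\#E\br{\FF_p} \mod \abr{1 - \zeta_q}. $$
Substituting $\LLL\br{E} = \BSD\br{E}$ gives $u \equiv -\#E\br{\FF_p}\BSD\br{E} \mod \abr{1 - \zeta_q}$, as required.

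There is essentially no obstacle: the result is a combination of the two earlier statements, and the only mild check is internal consistency of the mod-$\abr{1 - \zeta_q}$ reductions. Since $q \nmid \c_0\br{E}$ and $q \nmid \BSD\br{E}$ by assumption, and $\LLL\br{E, \chi}$ is integral at $\abr{1 - \zeta_q}$ by Proposition \ref{prop:twisted} under $\c_1\br{E} = 1$, inverting the denominators on both sides of the congruence in Corollary \ref{cor:congruence} is legitimate and commutes with the substitution $\LLL\br{E} = \BSD\br{E}$, so the two pieces glue together without any further calculation.
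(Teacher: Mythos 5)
Your proposal is correct and matches the paper's argument, which states that the corollary follows immediately from Corollary \ref{cor:congruence} and Proposition \ref{prop:bsd}; your only addition is spelling out that the final clause of Proposition \ref{prop:bsd} gives $\LLL\br{E, \chi} = v \cdot \zeta^{-1}$ with $v$ and $\zeta$ units, so $\LLL\br{E, \chi}$ is itself a unit in $\ZZ\sbr{\zeta_q}$.
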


Again, this follows immediately from Corollary \ref{cor:congruence} and Proposition \ref{prop:bsd}. Corollary \ref{cor:unit} partially explains the remaining phenomena observed by Dokchitser--Evans--Wiersema \cite[Example 44]{DEW21}, where they gave many pairs of examples of arithmetically trivial elliptic curves $ E_1 $ and $ E_2 $ with $ \LLL\br{E_1, \chi} \ne \LLL\br{E_2, \chi} $ for quintic characters $ \chi $, in the sense that $ \LLL\br{E_1, \chi} \ne \LLL\br{E_2, \chi} $ precisely because $ \#E_1\br{\FF_p} \not\equiv \#E_2\br{\FF_p} \mod \abr{1 - \zeta_5} $.

\begin{example}
Let $ E_1 $ and $ E_2 $ be the elliptic curves 307a1 and 307c1 respectively, and let $ \chi $ be the quintic character of conductor $ 11 $ such that $ \chi\br{2} = \zeta_5 $. Then $ \c_0\br{E_i} = \BSD\br{E_i} = \BSD\br{E_i / K} = 1 $ for $ i = 1, 2 $, so Proposition \ref{prop:bsd} implies that $ \LLL\br{E_i, \chi} $ is a unit, but it was a priori unclear why $ \LLL\br{E_1, \chi} = 1 $ and $ \LLL\br{E_2, \chi} = \zeta_5u^2 $, where $ u \coloneqq 1 + \zeta_5^4 $. Corollary \ref{cor:unit} explains this by requiring that $ \LLL\br{E_i, \chi} \equiv -\#E_i\br{\FF_{11}} \mod \abr{1 - \zeta_5} $, and in this case $ \#E_1\br{\FF_{11}} = 9 $ and $ \#E_2\br{\FF_{11}} = 16 $, which are distinct modulo $ 5 $.

They provided a few other examples satisfying $ \c_0\br{E} = \BSD\br{E} = \BSD\br[0]{E / K} = 1 $ with different $ \LLL\br{E, \chi} $ for this character, and they can all be explained similarly. For reference and comparison, the values of $ \LLL\br{E, \chi} $ for the above character and of $ \#E\br{\FF_{11}} $ are tabulated as follows.

\vspace{0.2cm}

\begin{tabular}{|c|cc|cc|cc|cc|cc|}
\hline
$ E $ & 307a1 & 307c1 & 432g1 & 432h1 & 714b1 & 714h1 & 1187a1 & 1187b1 & 1216g1 & 1216k1 \\
\hline
$ \LLL\br{E, \chi} $ & $ 1 $ & $ \zeta_5u^2 $ & $ u^2 $ & $ -\zeta_5u^{-1} $ & $ 1 $ & $ -\zeta_5^4u^3 $ & $ \zeta_5^2u^{-1} $ & $ \zeta_5u^{-3} $ & $ -\zeta_5^3u^2 $ & $ \zeta_5^4u^{-1} $ \\
\hline
$ \#E\br{\FF_{11}} $ & 9 & 16 & 16 & 8 & 9 & 13 & 17 & 8 & 9 & 7 \\
\hline
\end{tabular}
\end{example}

When $ q \ne 3 $ and $ \BSD\br{E} \ne \BSD\br[0]{E / K} $, it is slightly awkward to rephrase Proposition \ref{prop:bsd} in a way that is applicable by Corollary \ref{cor:congruence}, so it is best illustrated with an example \cite[Example 46]{DEW21}.

\begin{example}
Let $ E_1 $ and $ E_2 $ be the elliptic curves 291d1 and 139a1 respectively, and let $ \chi $ be the quintic character of conductor $ 31 $ such that $ \chi\br{3} = \zeta_5^3 $. Then $ \c_0\br{E_i} = \BSD\br{E_i} = 1 $, but $ \BSD\br{E_i / K} = 11^2 $ for $ i = 1, 2 $, so Proposition \ref{prop:bsd} implies that $ \LLL\br{E_i, \chi} $ generates an ideal of norm $ 11^2 $ invariant under complex conjugation. By considering the primes above $ 11^2 $ in $ \ZZ\sbr{\zeta_5} $, there are only two such ideals, abstractly generated by $ \lambda_1 \coloneqq 3\zeta_5^3 + \zeta_5^2 + 3\zeta_5 \equiv 2 \mod \abr{1 - \zeta_5} $ and $ \lambda_2 \coloneqq \zeta_5^3 + 3\zeta_5 + 3 \equiv 2 \mod \abr{1 - \zeta_5} $, and in fact $ \abr{\LLL\br{E_i, \chi}} = \abr{\lambda_i} $. Assuming this, Corollary \ref{cor:congruence} then predicts that $ \LLL\br{E_i, \chi} = u_i\lambda_i $ for some units $ u_i \in \ZZ\sbr{\zeta_5} $ such that $ 2u_i \equiv -\#E_i\br{\FF_{31}} \mod \abr{1 - \zeta_5} $, and in this case $ \#E_1\br{\FF_{31}} = 33 \equiv 3 \mod 5 $ and $ \#E_1\br{\FF_{31}} = 23 \equiv 3 \mod 5 $, so $ u_i \equiv 1 \mod \abr{1 - \zeta_5} $. In fact, $ u_1 = \zeta_5^4 $ and $ u_2 = \zeta_5 - \zeta_5 + 1 $.

They have a few other examples with this character, and in each case the unit can be predicted modulo $ \abr{1 - \zeta_5} $ in the same fashion, but they will be omitted here for brevity.
\end{example}

\begin{remark}
As this example highlights, in general it is possible for $ \LLL\br{E_1, \chi} \equiv \LLL\br{E_2, \chi} \mod \abr{1 - \zeta_q} $ but $ \LLL\br{E_1, \chi} \ne \LLL\br{E_2, \chi} $, even when $ \c_0\br{E_i} = \BSD\br{E_i} = 1 $. There are also examples for when $ E_i $ have the same conductor and discriminant, and furthermore $ \BSD\br{E_i / K} = 1 $, such as for the elliptic curves 544b1 and 544f1 and the quintic character $ \chi $ of conductor $ 11 $ given by $ \chi\br{2} = \zeta_5 $, where $ \LLL\br{E_1, \chi} = -\zeta_5^3 - \zeta_5 $ and $ \LLL\br{E_2, \chi} = -2\zeta_5^3 - 3\zeta_5^2 - 2\zeta_5 $. This is the pair of elliptic curves with the smallest conductor satisfying the aforementioned properties but $ \LLL\br{E_1, \chi} \ne \LLL\br{E_2, \chi} $, and other examples do seem to be quite rare.
\end{remark}

\pagebreak

\section{Residual densities of twisted L-values}

\label{sec:density}

For a fixed elliptic curve $ E $ of conductor $ N $, a natural problem is to determine the asymptotic distribution of $ \LLL\br{E, \chi} $ as $ \chi $ varies over characters of some fixed prime order $ q $ but arbitrarily high odd prime conductor $ p \nmid N $. However, for each such $ p $, there are $ q - 1 $ characters $ \chi $ of conductor $ p $ and order $ q $, giving rise to $ q - 1 $ conjugates of $ \LLL\br{E, \chi} $, so a uniform choice of $ \chi $ for each $ p $ has to be made for any meaningful analysis. One solution is to observe that the residue class of $ \LLL\br{E, \chi} $ modulo $ \abr{1 - \zeta_q} $ is independent of the choice of $ \chi $, so a simpler problem would be to determine the asymptotic distribution of these residue classes instead. As in the introduction, let $ X_{E, q}^{< n} $ be the set of equivalence classes of characters of odd order $ q $ and odd prime conductor $ p \nmid N $ less than $ n $, where two characters in $ X_{E, q}^{< n} $ are equivalent if they have the same conductor. Define the residual densities $ \delta_{E, q} $ of $ \LLL\br{E, \chi} $ to be the natural densities of $ \LLL\br{E, \chi} $ modulo $ \abr{1 - \zeta_q} $, namely
$$ \delta_{E, q}\br{\lambda} \coloneqq \lim_{n \to \infty} \dfrac{\#\cbr{\chi \in X_{E, q}^{< n} \st \LLL\br{E, \chi} \equiv \lambda \mod \abr{1 - \zeta_q}}}{\#X_{E, q}^{< n}}, \qquad \lambda \in \FF_q, $$
if such a limit exists. When $ q \nmid \c_0\br{E} $, this can be computed for each $ \lambda \in \FF_q $ directly using Corollary \ref{cor:congruence}, with the only subtlety being the possible cancellations between $ \LLL\br{E} $ and $ \#E\br{\FF_p} $. In the generic scenario when $ \im\overline{\rho_{E, q}} $ is maximal, there is a clean description in terms of Legendre symbols.

\begin{proposition}
\label{prop:density}
Let $ E $ be an elliptic curve such that $ \L\br{E, 1} \ne 0 $, and let $ q \nmid \c_0\br{E} $ be an odd prime.
\begin{enumerate}
\item If $ \ord_q\br{\LLL\br{E}} > 0 $, then $ \delta_{E, q}\br{0} = 1 $ and $ \delta_{E, q}\br{\lambda} = 0 $ for any $ \lambda \in \FF_q^\times $.
\item If $ \ord_q\br{\LLL\br{E}} \le 0 $, then set $ m \coloneqq 1 - \ord_q\br{\LLL\br{E}} $ and
$$ \G_{E, q^m} \coloneqq \cbr{M \in \im\overline{\rho_{E, q^m}} \st \det\br{M} \equiv 1 \mod q}. $$
Then for any $ \lambda \in \FF_q $,
$$ \delta_{E, q}\br{\lambda} = \dfrac{\#\cbr{M \in \G_{E, q^m} \st 1 + \det\br{M} - \tr\br{M} \equiv -\lambda\LLL\br{E}^{-1} \mod q^m}}{\#\G_{E, q^m}}. $$
\end{enumerate}
In particular, if $ \ord_q\br{\LLL\br{E}} \le 0 $ and $ \overline{\rho_{E, q}} $ is surjective, then set
$$ \lambda_{E, q} \coloneqq \br{\dfrac{\lambda\LLL\br{E}^{-1}}{q}}\br{\dfrac{\lambda\LLL\br{E}^{-1} + 4}{q}}. $$
Then for any $ \lambda \in \FF_q $,
$$ \delta_{E, q}\br{\lambda} =
\begin{cases}
\dfrac{1}{q - 1} & \text{if} \ \lambda_{E, q} = 1 \\
\dfrac{q}{q^2 - 1} & \text{if} \ \lambda_{E, q} = 0 \\
\dfrac{1}{q + 1} & \text{if} \ \lambda_{E, q} = -1
\end{cases}.
$$
\end{proposition}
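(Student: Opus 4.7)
The plan is to leverage Corollary \ref{cor:congruence} to reduce the computation of $ \delta_{E, q} $ to a distribution problem for Frobenius traces, which is then handled by Chebotarev's density theorem and a count of conjugacy classes. Since $ q \nmid \c_0\br{E} $, Corollary \ref{cor:congruence} gives
$$ \LLL\br{E, \chi} \equiv -\LLL\br{E}\#E\br{\FF_p} \mod \abr{1 - \zeta_q} $$
for every character $ \chi $ of order $ q $ and odd prime conductor $ p \nmid N $, with the right hand side depending only on $ p $. As $ X_{E, q}^{< n} $ is in bijection with the set of odd primes $ p < n $ coprime to $ N $ satisfying $ p \equiv 1 \mod q $, the density $ \delta_{E, q}\br{\lambda} $ equals the natural density of such primes $ p $ with $ -\LLL\br{E}\#E\br{\FF_p} \equiv \lambda \mod q $.

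Part 1 is then immediate: if $ \ord_q\br{\LLL\br{E}} > 0 $, then $ \LLL\br{E}\#E\br{\FF_p} \equiv 0 \mod q $ for every such prime $ p $. For part 2, the integer $ m = 1 - \ord_q\br{\LLL\br{E}} \ge 1 $ is chosen so that $ q^{m - 1}\LLL\br{E} $ is a $ q $-adic unit, and clearing denominators converts $ -\LLL\br{E}\#E\br{\FF_p} \equiv \lambda \mod q $ into the $ q $-adic integer congruence $ \#E\br{\FF_p} \equiv -\lambda \LLL\br{E}^{-1} \mod q^m $. Writing $ \#E\br{\FF_p} = 1 + \det\br[1]{\overline{\rho_{E, q^m}}\br{\Fr_p}} - \tr\br[1]{\overline{\rho_{E, q^m}}\br{\Fr_p}} $ and using the Weil pairing to identify $ p \mod q $ with $ \det\overline{\rho_{E, q^m}}\br{\Fr_p} \mod q $, Chebotarev's density theorem applied to the $ q^m $-division field of $ E $ expresses $ \delta_{E, q}\br{\lambda} $ as the fraction of $ M \in \G_{E, q^m} $ satisfying $ 1 + \det\br{M} - \tr\br{M} \equiv -\lambda \LLL\br{E}^{-1} \mod q^m $, as required.

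For the special case, observe first that surjectivity of $ \overline{\rho_{E, q}} $ implies $ E $ has no rational $ q $-isogeny, so the first part of Theorem \ref{thm:valuation} forces $ \ord_q\br{\LLL\br{E}} \ge 0 $; combined with $ \ord_q\br{\LLL\br{E}} \le 0 $ this gives $ m = 1 $ and $ \G_{E, q} = \SL\br{q} $. The problem thus reduces to counting matrices in $ \SL\br{q} $ of trace $ t \coloneqq 2 + \lambda \LLL\br{E}^{-1} $, and since $ t^2 - 4 = \br{t - 2}\br{t + 2} = \lambda \LLL\br{E}^{-1}\br{\lambda \LLL\br{E}^{-1} + 4} $, the Legendre symbol of $ t^2 - 4 $ modulo $ q $ factors as $ \lambda_{E, q} $ by multiplicativity. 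Reading off the conjugacy class table in the background section, there are $ q^2 $ matrices of trace $ t $ when $ t \equiv \pm 2 \mod q $ (the unipotent and negative-unipotent cases, where $ \lambda_{E, q} = 0 $), $ q\br{q + 1} $ when $ t^2 - 4 $ is a nonzero quadratic residue (the split semisimple case, $ \lambda_{E, q} = 1 $), and $ q\br{q - 1} $ when $ t^2 - 4 $ is a quadratic nonresidue (the non-split semisimple case, $ \lambda_{E, q} = -1 $). Dividing by $ \#\SL\br{q} = q\br{q^2 - 1} $ yields the claimed densities $ 1 / \br{q - 1} $, $ q / \br{q^2 - 1} $, and $ 1 / \br{q + 1} $ respectively.

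The main technical subtlety is handling the case $ \ord_q\br{\LLL\br{E}} < 0 $ in part 2, where clearing the denominator of $ \LLL\br{E} $ forces $ \#E\br{\FF_p} $ to be $ q $-adically divisible; this constraint is absorbed automatically into the congruence modulo $ q^m $, together with the restriction $ \det M \equiv 1 \mod q $ on $ \G_{E, q^m} $ coming from $ p \equiv 1 \mod q $, without which characters of order $ q $ and conductor $ p $ would not exist. The remainder of the argument is a routine application of Chebotarev combined with an enumeration of the $ \SL\br{q} $ conjugacy class table provided in the background section.
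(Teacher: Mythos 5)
Your proposal is correct and follows essentially the same route as the paper's proof: reduce via Corollary \ref{cor:congruence} to the density of primes $ p \equiv 1 \mod q $ with $ \#E\br{\FF_p} \equiv -\lambda\LLL\br{E}^{-1} \mod q^m $, apply Chebotarev to the $ q^m $-division field with the determinant restriction $ \det\br{M} \equiv 1 \mod q $, invoke Theorem \ref{thm:valuation}.1 to force $ m = 1 $ in the surjective case, and count traces via the $ \SL\br{q} $ conjugacy class table. Your trace value $ t = 2 + \lambda\LLL\br{E}^{-1} $ is the one consistent with the stated $ \lambda_{E, q} $, so no issues.
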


\begin{proof}
By Corollary \ref{cor:congruence}, $ \delta_{E, q}\br{\lambda} $ is just the natural density of $ -\LLL\br{E}\#E\br{\FF_p} \equiv \lambda \mod q $. If $ \ord_q\br{\LLL\br{E}} > 0 $, then only $ \lambda = 0 $ gives a non-zero natural density, otherwise this is equivalent to $ 1 + p - \a_p\br{E} \equiv -\lambda\LLL\br{E}^{-1} \mod q^m $, noting that $ \LLL\br{E}^{-1} $ is well-defined and non-zero modulo $ q^m $ by definition. By Chebotarev's density theorem, this occurs with the proportion of matrices $ M \in \G_{E, q} $ with $ \det\br{M} = p $ and $ \tr\br{M} = \a_p\br{E} $, so the second statement follows. If $ \overline{\rho_{E, q}} $ is surjective, then Theorem \ref{thm:valuation}.1 yields $ m = 1 $, so $ \delta_{E, q}\br{\lambda} $ is the proportion of matrices $ M \in \SL\br{q} $ such that $ \tr\br{M} \equiv 2 - \lambda\LLL\br{E}^{-1} \mod q $. The final statement then follows by $ \#\SL\br{q} = \br{q - 1}q\br{q + 1} $ and by inspecting the trace in each conjugacy class of $ \SL\br{q} $, noting that $ \tr\br{M} = x + x^{-1} $ for some $ x \in \FF_q \setminus \cbr{\pm1} $ precisely when $ x^2 - 4 $ is a quadratic residue modulo $ q $.
\end{proof}

\begin{remark}
Without the assumption $ q \nmid \c_0\br{E} $, the same argument can be used to compute the residual density of $ \c_0\br{E}\LLL\br{E, \chi} $ instead, by adding a factor of $ \c_0\br{E} $ to every instance of $ \LLL\br{E} $ in the statement and proof of Proposition \ref{prop:density}. On the other hand, Proposition \ref{prop:twisted} predicts that $ \LLL\br{E, \chi} \in \ZZ\sbr{\zeta_q} $ under Stevens's conjecture, so both sides of the congruence are divisible by $ q $ and the statement becomes vacuous.
\end{remark}

\pagebreak

\begin{remark}
Under standard arithmetic conjectures, Proposition \ref{prop:bsd} says that $ \LLL\br{E, \chi} \cdot \zeta \in \ZZ\sbr{\zeta_q} $, so $ \Nm_q^+\br{\LLL\br{E, \chi} \cdot \zeta} \in \ZZ $. Since the norm is multiplicative and $ \zeta \equiv 1 \mod \abr{1 - \zeta_q} $, the asymptotic distribution of the residue class of $ \Nm_q^+\br{\LLL\br{E, \chi} \cdot \zeta} $ modulo $ q $ essentially boils down to computing $ \delta_{E, q} $.
\end{remark}

Assuming the $ q $-part of the Birch--Swinnerton-Dyer conjecture, Theorem \ref{thm:valuation}.3 says $ \ord_q\br{\BSD\br{E}} \ge -1 $, so non-trivial $ \delta_{E, q} $ are only visible when $ \ord_q\br{\BSD\br{E}} = 0, -1 $. Once this is determined, computing $ \delta_{E, q} $ then reduces to identifying $ \im\overline{\rho_{E, q}} $ or $ \im\overline{\rho_{E, q^2}} $, and then weighing the proportion of matrices with a certain determinant and trace. To illustrate this in action, the next result describes the possible ordered triples $ \br{\delta_{E, 3}\br{0}, \delta_{E, 3}\br{1}, \delta_{E, 3}\br{2}} $ of residual densities, which is only made possible thanks to the classification of $ 3 $-adic Galois images by Rouse--Sutherland--Zureick-Brown \cite[Corollary 1.3.1 and Corollary 12.3.3]{RSZB22}.

\begin{theorem}
\label{thm:density}
Let $ E $ be an elliptic curve such that $ 3 \nmid \c_0\br{E} $ and $ \L\br{E, 1} \ne 0 $. Assume that $ \ord_3\br{\LLL\br{E}} = \ord_3\br{\BSD\br{E}} $. Then precisely one of the following holds.
\begin{enumerate}
\item If $ \ord_3\br{\BSD\br{E}} > 0 $, then $ \delta_{E, 3}\br{0} = 1 $ and $ \delta_{E, 3}\br{1} = \delta_{E, 3}\br{2} = 0 $.
\item If $ \ord_3\br{\BSD\br{E}} = 0 $ and $ 3 \mid \#\tor\br{E} $, then $ \delta_{E, 3}\br{0} = 1 $ and $ \delta_{E, 3}\br{1} = \delta_{E, 3}\br{2} = 0 $.
\item If $ \ord_3\br{\BSD\br{E}} = 0 $ and $ 3 \nmid \#\tor\br{E} $, then $ \br{\delta_{E, 3}\br{0}, \delta_{E, 3}\br{1}, \delta_{E, 3}\br{2}} $ is given in Table \ref{tab:mod3}.
\item If $ \ord_3\br{\BSD\br{E}} = -1 $, then $ \br{\delta_{E, 3}\br{0}, \delta_{E, 3}\br{1}, \delta_{E, 3}\br{2}} $ is given in Table \ref{tab:3adic}.
\end{enumerate}
In particular, $ \br{\delta_{E, 3}\br{0}, \delta_{E, 3}\br{1}, \delta_{E, 3}\br{2}} $ only depends on $ \BSD\br{E} $ and on $ \im\overline{\rho_{E, 9}} $, and can only be one of
$$ \br{1, 0, 0}, \ \br{\tfrac{3}{8}, \tfrac{3}{8}, \tfrac{1}{4}}, \ \br{\tfrac{3}{8}, \tfrac{1}{4}, \tfrac{3}{8}}, \ \br{\tfrac{1}{2}, \tfrac{1}{2}, 0}, \ \br{\tfrac{1}{2}, 0, \tfrac{1}{2}}, \ \br{\tfrac{1}{8}, \tfrac{3}{4}, \tfrac{1}{8}}, $$
$$ \br{\tfrac{1}{8}, \tfrac{1}{8}, \tfrac{3}{4}}, \ \br{\tfrac{1}{4}, \tfrac{1}{2}, \tfrac{1}{4}}, \ \br{\tfrac{1}{4}, \tfrac{1}{4}, \tfrac{1}{2}}, \ \br{\tfrac{5}{9}, \tfrac{2}{9}, \tfrac{2}{9}}, \ \br{\tfrac{1}{3}, \tfrac{2}{3}, 0}, \ \br{\tfrac{1}{3}, 0, \tfrac{2}{3}}. $$
\end{theorem}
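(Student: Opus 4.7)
The plan is to apply Proposition \ref{prop:density} with $q = 3$ to reduce the computation of $(\delta_{E,3}(0), \delta_{E,3}(1), \delta_{E,3}(2))$ to counting matrices in $\G_{E,3^m}$ with prescribed values of $1 + \det(M) - \tr(M) \pmod{3^m}$, where $m = 1 - \ord_3(\LLL(E))$. The hypothesis $\ord_3(\LLL(E)) = \ord_3(\BSD(E))$ combined with Theorem \ref{thm:valuation}.2 forces $\ord_3(\LLL(E)) \geq -1$, so only $m \in \{1, 2\}$ arise, corresponding respectively to $\ord_3(\BSD(E)) = 0$ and $\ord_3(\BSD(E)) = -1$. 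I would then proceed by case analysis on $\ord_3(\BSD(E))$.

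Cases 1 and 2 are quick. If $\ord_3(\BSD(E)) > 0$ then Proposition \ref{prop:density}.1 gives $(1, 0, 0)$ immediately. If $\ord_3(\BSD(E)) = 0$ but $3 \mid \#\tor(E)$, then $E$ has a rational $3$-torsion point, so $\im\overline{\rho_{E,3}}$ fixes a nonzero vector of $\FF_3^2$ and every matrix $M$ in the image has $1$ as an eigenvalue, forcing $1 + \det(M) - \tr(M) \equiv 0 \pmod 3$. Proposition \ref{prop:density}.2 with $m = 1$ then leaves only $\lambda = 0$ contributing, yielding $(1, 0, 0)$ again.

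The substance lies in cases 3 and 4, which produce the remaining ten triples. For case 3 one enumerates the possibilities for $\im\overline{\rho_{E,3}}$ from Sutherland's classification that are compatible with $3 \nmid \#\tor(E)$, and for each such image tabulates, via the $\SL(3)$ conjugacy class table recalled in Section 2, the number of matrices of determinant $\equiv 1 \pmod 3$ with trace $\equiv 2 - \lambda\LLL(E)^{-1} \pmod 3$ for each $\lambda \in \FF_3$, producing Table \ref{tab:mod3}. For case 4 one has $m = 2$, so the relevant data is $\im\overline{\rho_{E,9}}$, classified by Rouse--Sutherland--Zureick-Brown. Proposition \ref{prop:divide} restricts which $3$-adic images can realise $\ord_3(\BSD(E)) = -1$, and for each surviving mod-$9$ image one intersects with the kernel of $\det \bmod 3$ to form $\G_{E,9}$, enumerates its elements, and tallies the distribution of $1 + \det(M) - \tr(M) \pmod 9$ against $\lambda\LLL(E)^{-1}$, producing Table \ref{tab:3adic}. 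The twelve triples in the statement are then the union of values appearing across the four cases, and the dependence on only $\BSD(E)$ and $\im\overline{\rho_{E,9}}$ is automatic from Proposition \ref{prop:density}, since $m$ is determined by $\ord_3(\BSD(E))$ and the matrix count only depends on the mod-$9$ image.

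The main obstacle is the mod-$9$ case analysis in case 4: although the Rouse--Sutherland--Zureick-Brown list of $3$-adic images is finite, each must be concretely lifted modulo $9$, intersected with the determinant-one subgroup, and have its distribution of $1 + \det(M) - \tr(M) \pmod 9$ worked out. Once these bookkeeping computations are assembled into the two tables, the theorem follows by inspection.
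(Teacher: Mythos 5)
Your proposal follows essentially the same route as the paper: reduce via Proposition \ref{prop:density} and Theorem \ref{thm:valuation} to the four cases, dispatch the first two directly (the second by the eigenvalue-$1$ argument), and compute the remaining densities by finite enumeration of $\G_{E,3}$ over the possible mod-$3$ images and of $\G_{E,9}$ over the mod-$9$ projections of the Rouse--Sutherland--Zureick-Brown $3$-adic images, exactly as assembled in Tables \ref{tab:mod3} and \ref{tab:3adic}. The only slight deviation is that the paper restricts the mod-$9$ case to the $21$ images of curves with $3$-torsion directly from the classification (since $\ord_3\br{\BSD\br{E}} = -1$ forces $3 \mid \#\tor\br{E}$) rather than through Proposition \ref{prop:divide}, but the counting computation is the same.
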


\begin{proof}
The fact that there are only four possibilities is an immediate consequence of Theorem \ref{thm:valuation}. By Proposition \ref{prop:density}, the first statement follows immediately under the assumption that $ \ord_3\br{\LLL\br{E}} = \ord_3\br{\BSD\br{E}} $, while the second statement follows from $ 3 \mid 1 + \det\br{M} - \tr\br{M} $ for all $ M \in \im\overline{\rho_{E, 3}} $ whenever $ 3 \mid \#\tor\br{E} $. The final statement follows from the first four, so it remains to prove the third and fourth statements.

For the third statement, it suffices to consider $ \G_{E, 3} = \im\overline{\rho_{E, 3}} \cap \SL\br{3} $, and there are only $ 5 $ possibilities for $ \im\overline{\rho_{E, 3}} $ when $ \overline{\rho_{E, 3}} $ is not surjective, as tabulated in Table \ref{tab:mod3}. If $ \overline{\rho_{E, 3}} $ is surjective, then $ \delta_{E, 3}\br{\lambda} $ is already computed in the final statement in Proposition \ref{prop:density}, while the other $ 5 $ cases are similar but easier computations. For instance, if $ \im\overline{\rho_{E, 3}} $ is 3B.1.2, then $ \G_{E, 3} $ is conjugate to the subgroup of unipotent upper triangular matrices in $ \SL\br{3} $, so counting the six matrices with each trace yields $ \delta_{E, 3}\br{0} = 1 $ and $ \delta_{E, 3}\br{1} = \delta_{E, 3}\br{2} = 0 $. Note that when $ \delta_{E, 3}\br{1} \ne \delta_{E, 3}\br{2} $, the residue of $ \BSD\br{E} $ modulo $ 3 $ would swap $ \delta_{E, 3}\br{1} $ and $ \delta_{E, 3}\br{2} $, such as in the case of $ \SL\br{3} \subseteq \im\overline{\rho_{E, 3}} $ where $ \delta_{E, 3}\br{1} = \tfrac{1}{4} $ precisely if $ \BSD\br{E} \equiv 1 \mod 3 $ and $ \delta_{E, 3}\br{1} = \tfrac{3}{8} $ otherwise.

For the fourth statement, it suffices to consider $ \G_{E, 9} $, and by the classification this is the projection onto $ \GL\br{9} $ of $ 21 $ different possible $ \im\rho_{E, 3} $, as tabulated in Table \ref{tab:3adic}. For instance, if $ \im\rho_{E, 3} $ is 3.8.0.1, then $ \G_{E, 9} $ is the preimage of the subgroup of $ \SL\br{3} $ generated by $ \twobytwosmall{1}{2}{0}{1} $ and $ \twobytwosmall{1}{2}{0}{2} $ under the canonical projection $ \GL\br{9} \twoheadrightarrow \GL\br{3} $. This preimage in $ \GL\br{9} $ consists of $ 243 $ matrices, of which $ 135 $ have trace $ 0 $ and $ 54 $ have trace $ 1 $ and $ 2 $ each, so $ \delta_{E, 3}\br{0} = \tfrac{135}{243} = \tfrac{5}{9} $ and $ \delta_{E, 3}\br{1} = \delta_{E, 3}\br{2} = \tfrac{54}{243} = \tfrac{2}{9} $. The other $ 20 $ cases are similar but easier computations, noting again the residue of $ 3\BSD\br{E} $ modulo $ 3 $ when $ \delta_{E, 3}\br{1} \ne \delta_{E, 3}\br{2} $, such as in the case of 27.648.18.1 where $ \delta_{E, 3}\br{1} = 0 $ precisely if $ 3\BSD\br{E} \equiv 2 \mod 3 $ and $ \delta_{E, 3}\br{1} = \tfrac{2}{3} $ otherwise.
\end{proof}

\begin{remark}
The first case happens when $ 3 \nmid \#\tor\br{E} $ but $ 3 \mid \Tam\br{E}\#\Sha\br{E} $, such as for the elliptic curve 50b4 where $ \BSD\br{E} = 3 $, and the second case happens when $ 9 \mid \Tam\br{E}\#\Sha\br{E} $, such as for the elliptic curve 84a1 where $ \BSD\br{E} = \tfrac{1}{2} $. It might be interesting to formulate more amenable criteria for these, but it will not be discussed here. Note also that if $ \im\overline{\rho_{E, 3}} $ is 3Cs.1.1, a much easier argument to prove that $ \delta_{E, 3}\br{0} = 1 $ and $ \delta_{E, 3}\br{1} = \delta_{E, 3}\br{2} = 0 $ is to observe that $ \G_{E, 3} $ is trivial, so $ E\br{\FF_p} $ acquires full $ 3 $-torsion for any $ p \equiv 1 \mod 3 $, and thus $ \LLL\br{E, \chi} \equiv -\BSD\br{E}\#E\br{\FF_p} \equiv 0 \mod 3 $ always.
\end{remark}

\begin{remark}
Assuming the $ 3 $-part of the Birch--Swinnerton-Dyer conjecture holds, Theorem \ref{thm:density} then describes the densities of the sign $ u $ determined in Corollary \ref{cor:cubic}, and hence the actual densities of $ \LLL\br{E, \chi} $.
\end{remark}

\pagebreak

\section{Twisted L-values of Kisilevsky--Nam}

\label{sec:kn22}

The computation of residual densities was originally motivated by the statistical data in Kisilevsky--Nam \cite[Section 7]{KN22}. They numerically computed millions of algebraic twisted L-values by fixing the elliptic curve and varying the character, but considered an alternative normalisation given by
$$ \LLL^+\br{E, \chi} \coloneqq
\begin{cases}
\LLL\br{E, \chi} & \text{if} \ \chi\br{N} = 1 \\
\LLL\br{E, \chi} \cdot \br{1 + \overline{\chi\br{N}}} & \text{if} \ \chi\br{N} \ne 1
\end{cases},
$$
in contrast to the normalisation factor $ \zeta $ in Proposition \ref{prop:bsd}. Under the implicit assumption that $ \LLL\br{E, \chi} \in \ZZ\sbr{\zeta_q} $, they showed that $ \LLL^+\br{E, \chi} \in \ZZ\sbr{\zeta_q}^+ $ \cite[Proposition 2.1]{KN22}, so that $ \Nm_q^+\br{\LLL^+\br{E, \chi}} \in \ZZ $. Fixing six elliptic curves $ E $ and five small orders $ q $, they varied the character $ \chi $ over millions of conductors $ p $, empirically determined the greatest common divisor $ \gcd_{E, q} $ of all the integers $ \Nm_q^+\br{\LLL^+\br{E, \chi}} $, and considered
$$ \widetilde{\LLL}^+\br{E, \chi} \coloneqq \dfrac{\Nm_q^+\br{\LLL^+\br{E, \chi}}}{\gcd_{E, q}}. $$

\begin{remark}
The definition of $ \widetilde{\LLL}^+\br{E, \chi} $ is equivalent to that of $ A_\chi $ defined by Kisilevsky--Nam when $ q $ is odd and $ \L\br{E, 1} \ne 0 $, since $ \chi\br{N} = -1 $ never occurs and the global root number is always $ 1 $ \cite[Section 2.2]{KN22}. Their definition of $ \LLL\br{E, \chi} $ has an extra factor of $ 2 $, but this is cancelled out after division by $ \gcd_{E, q} $.
\end{remark}

\begin{remark}
In the interpretation of Proposition \ref{prop:bsd}, the integer $ \gcd_{E, q} $ is predicted to arise from contributions by the greatest common divisors of $ \BSD\br[0]{E / K} / \BSD\br{E} $ ranging over various number fields $ K $ of degree $ q $ over $ \QQ $ coming from characters of order $ q $, but this will not be discussed here.
\end{remark}

As their normalisation differs from that in Proposition \ref{prop:bsd} \cite[Remark 1]{KN22}, the resulting residual densities are skewed. More precisely, define $ X_{E, q}^{< n} $ as before, and define the analogous residual densities $ \delta_{E, q}' $ of $ \widetilde{\LLL}^+\br{E, \chi} $ to be the natural densities of $ \widetilde{\LLL}^+\br{E, \chi} $ modulo $ q $, or in other words
$$ \delta_{E, q}\br{\lambda} \coloneqq \lim_{n \to \infty} \dfrac{\#\cbr{\chi \in X_{E, q}^{< n} \st \widetilde{\LLL}^+\br{E, \chi} \equiv \lambda \mod q}}{\#X_{E, q}^{< n}}, \qquad \lambda \in \FF_q, $$
if such a limit exists. In the simplest case where $ q = 3 $ and $ 3 \nmid \gcd_{E, 3} $, there is no norm, and so
$$ \widetilde{\LLL}^+\br{E, \chi} \equiv
\begin{cases}
\LLL\br{E, \chi}\gcd_{E, 3} & \text{if} \ \chi\br{N} = 1 \\
2\LLL\br{E, \chi}\gcd_{E, 3} & \text{otherwise}
\end{cases},
$$
which becomes amenable to a similar computation to that of Proposition \ref{prop:density} provided $ \chi\br{N} $ is known. For certain elliptic curves, $ \chi\br{N} $ depends completely on $ \#E\br{\FF_p} $ due to a shared action of Frobenius in $ \GL\br{3} $.

\begin{lemma}
\label{lem:cubic}
Let $ E $ be an elliptic curve of conductor $ N $ with no rational $ 3 $-isogeny such that the splitting field $ F $ of $ X^3 - N $ lies in the splitting field $ K $ of the $ 3 $-division polynomial $ \psi_{E, 3} $, and let $ \chi $ be a cubic character of odd prime conductor $ p \nmid N $. Then $ \im\overline{\rho_{E, 3}} = \GL\br{3} $ and $ \Gal\br{K / \QQ} \cong \PGL\br{3} $. Furthermore, if $ p $ does not split completely in $ K $, then $ \#E\br{\FF_p} \equiv 2 \mod 3 $ if and only if $ \chi\br{N} = 1 $. Otherwise, if $ p $ splits completely in $ K $, then $ \#E\br{\FF_p} \not\equiv 2 \mod 3 $ and $ \chi\br{N} = 1 $.
\end{lemma}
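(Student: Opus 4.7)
The plan is to first pin down $ \Gal\br{K / \QQ} $ as a subgroup of $ \PGL\br{3} \cong S_4 $, then leverage the non-split central extension $ \SL\br{3} \cong 2.A_4 $ to upgrade to the full mod-$ 3 $ Galois image, and finally track $ \Fr_p $ through the tower $ \QQ \subseteq F \subseteq K \subseteq \QQ\br{E\sbr{3}} $.

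Since $ -I $ acts trivially on the $ x $-coordinates of $ 3 $-torsion points, $ K $ is the fixed field of $ \cbr{\pm I} \cap \im\overline{\rho_{E, 3}} $ in $ \QQ\br{E\sbr{3}} $, so $ \Gal\br{K / \QQ} $ is canonically the projective image of $ \im\overline{\rho_{E, 3}} $ inside $ \PGL\br{3} \cong S_4 $. The no-$ 3 $-isogeny hypothesis says this image is not contained in a Borel, so $ \Gal\br{K / \QQ} $ is one of the transitive subgroups $ S_4, A_4, D_4, V_4, C_4 $ of $ S_4 $. Assuming $ N $ is not a rational cube (otherwise $ F = \QQ\br{\zeta_3} $ collapses and $ \chi\br{N} = 1 $ is automatic), $ \Gal\br{F / \QQ} \cong S_3 $ is a quotient of $ \Gal\br{K / \QQ} $, and inspection of the above list shows $ S_4 $ is the only transitive subgroup admitting $ S_3 $ as a quotient. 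Hence $ \Gal\br{K / \QQ} \cong \PGL\br{3} $.

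To upgrade to $ \im\overline{\rho_{E, 3}} = \GL\br{3} $, the cyclotomic character forces $ \det $ to be surjective on the image, so $ \#\im\overline{\rho_{E, 3}} \in \cbr{24, 48} $. In the order-$ 24 $ case, $ \im\overline{\rho_{E, 3}} \cap \SL\br{3} $ would have order $ 12 $ and project isomorphically onto $ \PSL_2\br{\FF_3} \cong A_4 $, furnishing a section of the central extension $ 1 \to \cbr{\pm I} \to \SL\br{3} \to A_4 \to 1 $. But $ \SL\br{3} \cong 2.A_4 $ is the binary tetrahedral group, which has a unique element of order $ 2 $, so this extension is non-split. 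Therefore $ \#\im\overline{\rho_{E, 3}} = 48 $, i.e., $ \im\overline{\rho_{E, 3}} = \GL\br{3} $.

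For the Frobenius analysis, $ \chi $ being cubic of prime conductor forces $ p \equiv 1 \mod 3 $, so $ \det\br{\Fr_p} \equiv 1 $ and $ \Fr_p \in \SL\br{3} $. From the trace table of $ \SL\br{3} $ in Section 2: the elements $ \pm I $ (traces $ 2, 1 $) project to the identity of $ A_4 $; the unipotents and their negatives (same traces $ 2, 1 $) project to order-$ 3 $ elements; and the six order-$ 4 $ elements (trace $ 0 $) project to the three non-identity elements of the subgroup $ V_4 \subset A_4 $. Now $ p $ splits completely in $ K $ iff $ \Fr_p $ is trivial in $ \Gal\br{K / \QQ} \cong S_4 $ iff $ \Fr_p = \pm I $, yielding $ \tr\br{\Fr_p} \in \cbr{1, 2} $ and thus $ \#E\br{\FF_p} \equiv 2 - \tr\br{\Fr_p} \not\equiv 2 \mod 3 $; it also forces total splitting in $ F \subseteq K $, so $ \chi\br{N} = 1 $. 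Otherwise $ \Fr_p \ne \pm I $, and $ \chi\br{N} = 1 $ iff $ p $ splits completely in $ F $ iff $ \Fr_p $ lies in the kernel $ V_4 $ of $ S_4 \twoheadrightarrow S_3 = \Gal\br{F / \QQ} $; under $ \SL\br{3} \twoheadrightarrow A_4 $, this $ V_4 $ pulls back to $ \cbr{\pm I} $ together with the six order-$ 4 $ elements, so excluding $ \pm I $ leaves exactly the trace-$ 0 $ elements, giving $ \chi\br{N} = 1 $ iff $ \#E\br{\FF_p} \equiv 2 \mod 3 $. The only real obstacle is the non-split extension argument; the remaining bookkeeping is routine given the paper's explicit trace table.
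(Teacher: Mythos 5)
Your proof is correct, and its first half takes a genuinely different route from the paper's. The paper establishes $ \im\overline{\rho_{E, 3}} = \GL\br{3} $ by invoking the classification of non-surjective mod-$3$ images: with no rational $3$-isogeny the image would be contained in a Cartan normalizer (3Nn or 3Ns), which is a $2$-group and hence cannot surject onto $ \Gal\br{F / \QQ} \cong \SSS_3 $; the isomorphism $ \Gal\br{K / \QQ} \cong \PGL\br{3} $ is then read off. You instead work projectively first, pinning down $ \Gal\br{K / \QQ} \cong \SSS_4 $ by pure subgroup analysis of $ \SSS_4 $ (no fixed point on the four roots plus an $ \SSS_3 $ quotient), and then lift to $ \GL\br{3} $ via the non-splitness of $ 1 \to \cbr{\pm1} \to \SL\br{3} \to \AAA_4 \to 1 $ (unique involution in the binary tetrahedral group), which is a clean, classification-free argument. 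The Frobenius half is essentially the paper's computation, just carried out inside $ \SL\br{3} $ and its quaternion subgroup $ Q_8 $ (the preimage of $ \Gal\br{K / F} \cong V_4 $) rather than inside $ \Gal\br{K / \QQ\br{\zeta_3}} \cong \AAA_4 $ via Cayley--Hamilton; the identification of $ \chi\br{N} = 1 $ with $ \Fr_p \in \Gal\br{K / F} $ and of trace-$0$ elements with $ \#E\br{\FF_p} \equiv 2 \mod 3 $ matches the paper exactly.

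Two small remarks. First, the absence of a rational $3$-isogeny gives that the projective image has no fixed point on the four roots of $ \psi_{E, 3} $, not that it is transitive; your list omits the fixed-point-free intransitive subgroups (a non-normal $ V_4 $ and the $ C_2 $ generated by a double transposition), but since none of these admits an $ \SSS_3 $ quotient the conclusion $ \Gal\br{K / \QQ} \cong \SSS_4 $ is unaffected — just phrase the dichotomy as fixed-point-free rather than transitive. Second, your aside about $ N $ being a perfect cube is a reasonable flag (the paper's proof also tacitly assumes $ \Gal\br{F / \QQ} \cong \SSS_3 $, i.e.\ that $ N $ is not a cube), though note that in that degenerate case it is only the $ \chi\br{N} $ statements, not the Galois-image claims, that survive your patch; this caveat applies equally to the paper's own argument.
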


\begin{proof}
Let $ L $ be the extension of $ K $ obtained by adjoining the $ Y $-coordinates of $ E\sbr{3} $. By the assumption that $ E $ has no rational $ 3 $-isogeny and the classification of $ \im\overline{\rho_{E, 3}} $, if $ \overline{\rho_{E, 3}} $ were not surjective, then $ \Gal\br{L / \QQ} $ is either 3Nn or 3Ns. Neither of this could occur, since by the assumption that $ F \subseteq K $, there are inclusions
$$ \QQ \subseteq \QQ\br{\zeta_3} \subseteq F \subseteq K \subseteq L, $$
so in particular $ \Gal\br{L / \QQ} $ surjects onto $ \Gal\br{F / \QQ} \cong \SSS_3 $, which forces $ \Gal\br{L / \QQ} \cong \GL\br{3} $. On the other hand, $ \Gal\br{K / \QQ} $ permutes the roots of the degree $ 4 $ polynomial $ \psi_{E, 3} $, so it must be the quotient group $ \PGL\br{3} \cong \SSS_4 $. Its subgroup $ \Gal\br{K / \QQ\br{\zeta_3}} \cong \AAA_4 $ surjects onto $ \Gal\br{F / \QQ\br{\zeta_3}} \cong \ZZ / 3\ZZ $, with kernel the unique subgroup $ \Gal\br{K / F} \cong \br{\ZZ / 2\ZZ}^2 $ of index $ 4 $ consisting precisely of all elements of $ \AAA_4 $ of order $ 1 $ or $ 2 $.

\pagebreak

Now $ \Fr_p \in \Gal\br{K / \QQ} $ acts on the residue field of a prime $ \pi $ of $ F $ above $ p $ by
$$ \Fr_p\br{\zeta_3} \equiv \zeta_3^p \mod \pi, \qquad \Fr_p\br[1]{\sqrt[3]{N}} \equiv \sqrt[3]{N}^p \mod \pi. $$
Clearly $ \Fr_p $ fixes $ \zeta_3 $, so that $ \Fr_p \in \Gal\br{K / \QQ\br{\zeta_3}} $. Claim that, when $ p $ does not split completely in $ K $, the condition $ \Fr_p \in \Gal\br{K / F} $ is equivalent to $ \#E\br{\FF_p} \equiv 2 \mod 3 $ and to $ \chi\br{N} = 1 $. On one hand, this means that $ \Fr_p $ fixes $ \sqrt[3]{N} $, or equivalently that $ \sqrt[3]{N}^{p - 1} \equiv 1 \mod p $, which is precisely the condition that $ \chi\br{N} = 1 $. On the other hand, this also means that $ \Fr_p^2 = 1 $ in $ \Gal\br{K / \QQ\br{\zeta_3}} $, which is equivalent to $ \Fr_p $ having order exactly $ 2 $ in $ \Gal\br{K / \QQ\br{\zeta_3}} $. By the Cayley--Hamilton theorem, these are precisely the trace $ 0 $ matrices in $ \PGL\br{3} $, or equivalently the trace $ 0 $ matrices in $ \GL\br{3} $, which proves the equivalence with $ \a_p\br{E} = 0 $. If $ p $ splits completely in $ K $, then $ \Fr_p = 1 $ in $ \Gal\br{K / \QQ\br{\zeta_3}} $, but these never have trace $ 0 $ in $ \PGL\br{3} $ or $ \GL\br{3} $.
\end{proof}

\begin{remark}
The first assumption is necessary, evident in the elliptic curve 50b1 with $ F \subseteq K $ but $ \im\overline{\rho_{E, 3}} $ is 3B, where $ 7 $ does not split completely in $ K $ but $ \#E\br{\FF_7} = 10 \equiv 1 \mod 3 $ and $ \chi\br{50} = \overline{\chi\br{50}} = 1 $. The second assumption is also necessary, evident in the elliptic curve 21a1 with no rational $ 3 $-isogeny but $ F \not\subseteq K $, where $ 13 $ does not split completely in $ K $ but $ \#E\br{\FF_{13}} = 16 \equiv 1 \mod 3 $ and $ \chi\br{21} = \overline{\chi\br{21}} = 1 $. For the final statement, checking that $ p $ splits completely in $ F $ but not in $ K $ is not sufficient to conclude, such as for the elliptic curve 11a1, where $ \#E\br{\FF_{19}} = 20 \equiv 2 \mod 3 $ and $ \chi\br{11} = \overline{\chi\br{11}} = 1 $. If $ p $ does split completely in $ K $, then both $ \#E\br{\FF_p} \equiv 0 \mod 3 $ and $ \#E\br{\FF_p} \equiv 1 \mod 3 $ are possible, such as for the elliptic curve 11a1, where $ \#E\br{\FF_{337}} = 360 \equiv 0 \mod 3 $ and $ \#E\br{\FF_{193}} = 190 \equiv 1 \mod 3 $. Finally, note that this argument only works for cubic characters, as $ \PGL\br{q} $ is almost simple for $ q > 3 $ and admits few non-trivial surjections.
\end{remark}

For elliptic curves satisfying this property, the residual density of $ \widetilde{\LLL}^+\br{E, \chi} $ is easy to compute.

\begin{proposition}
\label{prop:cubic}
Let $ E $ be an elliptic curve of conductor $ N $ with no rational $ 3 $-isogeny such that $ 3 \nmid \c_0\br{E} $ and $ 3 \nmid \gcd_{E, 3} $, and the splitting field $ F $ of $ X^3 - N $ lies in the splitting field $ K $ of the $ 3 $-division polynomial $ \psi_{E, 3} $, and let $ \chi $ be a cubic character of odd prime conductor $ p \nmid N $. Then
$$ \widetilde{\LLL}^+\br{E, \chi} \equiv
\begin{cases}
0 \mod 3 & \text{if} \ \#E\br{\FF_p} \equiv 0 \mod 3 \\
2 \mod 3 & \text{if} \ \#E\br{\FF_p} \equiv 1 \mod 3 \ \text{and} \ p \ \text{splits completely in} \ K \\
1 \mod 3 & \text{otherwise}
\end{cases}.
$$
In particular,
$$ \delta_{E, 3}'\br{0} = \dfrac{9}{24}, \qquad \delta_{E, 3}'\br{1} = \dfrac{15}{24}, \qquad \delta_{E, 3}'\br{2} = \dfrac{1}{24}. $$
\end{proposition}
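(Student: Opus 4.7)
The plan is to combine the explicit congruence of Corollary \ref{cor:congruence} with the character value description in Lemma \ref{lem:cubic}, and then compute the densities via Chebotarev applied to $ \im\overline{\rho_{E, 3}} = \GL\br{3} $. Since $ \ZZ\sbr{\zeta_3}^+ = \ZZ $, the norm $ \Nm_3^+ $ is the identity on integers, so $ \widetilde{\LLL}^+\br{E, \chi} = \LLL^+\br{E, \chi} / \gcd_{E, 3} $; moreover $ \abr{1 - \zeta_3} \cap \ZZ = 3\ZZ $, so any congruence modulo $ \abr{1 - \zeta_3} $ between integers reduces to one modulo $ 3 $. Note also that $ \LLL\br{E} $ is $ 3 $-integral by Theorem \ref{thm:valuation} under the no rational $ 3 $-isogeny hypothesis.

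First I would apply Corollary \ref{cor:congruence}, valid because $ 3 \nmid \c_0\br{E} $, to obtain $ \LLL\br{E, \chi} \equiv -\LLL\br{E}\#E\br{\FF_p} \mod \abr{1 - \zeta_3} $. Since $ 1 + \overline{\chi\br{N}} \equiv 2 \mod \abr{1 - \zeta_3} $ when $ \chi\br{N} \ne 1 $, this yields $ \LLL^+\br{E, \chi} \equiv -\LLL\br{E}\#E\br{\FF_p} \mod \abr{1 - \zeta_3} $ when $ \chi\br{N} = 1 $, and $ \LLL^+\br{E, \chi} \equiv \LLL\br{E}\#E\br{\FF_p} \mod \abr{1 - \zeta_3} $ otherwise, using $ -2 \equiv 1 \mod 3 $. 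Lemma \ref{lem:cubic} then translates $ \chi\br{N} = 1 $ into the disjunction $ \#E\br{\FF_p} \equiv 2 \mod 3 $ or $ p $ splits completely in $ K $. The case $ \#E\br{\FF_p} \equiv 0 \mod 3 $ kills both sides; the case $ \#E\br{\FF_p} \equiv 1 \mod 3 $ with $ p $ splitting gives $ \LLL^+\br{E, \chi} \equiv -\LLL\br{E} \mod \abr{1 - \zeta_3} $; and the remaining cases both give $ \LLL^+\br{E, \chi} \equiv \LLL\br{E} \mod \abr{1 - \zeta_3} $. Dividing by $ \gcd_{E, 3} $ yields the stated trichotomy.

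For the densities, Lemma \ref{lem:cubic} ensures $ \im\overline{\rho_{E, 3}} = \GL\br{3} $. Since a cubic character of prime conductor $ p $ requires $ p \equiv 1 \mod 3 $, conditioning on this and applying Chebotarev makes $ \Fr_p $ uniformly distributed over $ \SL\br{3} $. I would count matrices in $ \SL\br{3} $ by trace using the table in Section 2, noting that $ p $ splits completely in $ K $ corresponds to $ \Fr_p \in \cbr{\pm I} $: the element $ I $ has trace $ 2 $ and falls under $ \#E\br{\FF_p} \equiv 0 \mod 3 $, while $ -I $ has trace $ 1 $ and falls under $ \#E\br{\FF_p} \equiv 1 \mod 3 $. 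The three densities then follow by direct enumeration within $ \SL\br{3} $.

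The main obstacle is pinning down the residue $ \LLL\br{E} / \gcd_{E, 3} \mod 3 $ to be exactly $ 1 $ rather than $ 2 $: the argument above only shows $ \widetilde{\LLL}^+\br{E, \chi} \equiv \pm \LLL\br{E} / \gcd_{E, 3} \mod 3 $ in the non-zero cases, and $ 3 \nmid \gcd_{E, 3} $ merely forces the ratio to be a unit in $ \FF_3 $. The specific value $ 1 $ should follow from the definition of $ \gcd_{E, 3} $ as the positive integer generator of the ideal spanned by all $ \LLL^+\br{E, \chi} $, whose nonzero residues modulo $ 3 $ are all $ \pm \LLL\br{E} $; tracking signs carefully then forces $ \LLL\br{E} \equiv \gcd_{E, 3} \mod 3 $.
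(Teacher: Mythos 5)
Your proposal is correct and is essentially the paper's own argument: the congruence of Corollary \ref{cor:congruence} combined with the factor $ 1 + \overline{\chi\br{N}} \equiv 2 \mod \abr{1 - \zeta_3} $, Lemma \ref{lem:cubic} to convert the condition $ \chi\br{N} = 1 $ into the point-count and splitting conditions, and Chebotarev equidistribution of $ \Fr_p $ over $ \SL\br{3} $ for the densities; even the step you flag as the main obstacle is settled in the paper only by the same appeal to $ \gcd_{E, 3} $ cancelling the factors of $ \LLL\br{E} $ by definition. One remark: the enumeration you defer to gives $ \br{\tfrac{9}{24}, \tfrac{14}{24}, \tfrac{1}{24}} $ --- nine trace-$ 2 $ matrices, eight non-scalar trace-$ 1 $ matrices plus six trace-$ 0 $ matrices, and $ -1 $ alone --- so the stated $ \delta_{E, 3}'\br{1} = \tfrac{15}{24} $, which would make the three densities sum to $ \tfrac{25}{24} $, appears to be a slip for $ \tfrac{14}{24} $, which is what your argument (and the paper's) actually yields.
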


\begin{proof}
By Corollary \ref{cor:congruence} and the assumptions that $ 3 \nmid \c_0\br{E} $ and $ 3 \nmid \gcd_{E, 3} $,
$$ \widetilde{\LLL}^+\br{E, \chi} \equiv
\begin{cases}
2\#E\br{\FF_p}\LLL\br{E}\gcd_{E, 3} & \text{if} \ \chi\br{N} = 1 \\
\#E\br{\FF_p}\LLL\br{E}\gcd_{E, 3} & \text{otherwise} \\
\end{cases}.
$$
Clearly $ \widetilde{\LLL}^+\br{E, \chi} \equiv 0 \mod 3 $ when $ \#E\br{\FF_p} \equiv 0 \mod 3 $. By Lemma \ref{lem:cubic}, $ \chi\br{N} = 1 $ occurs either when $ \#E\br{\FF_p} \equiv 1 \mod 3 $ but $ p $ splits completely in $ K $ or when $ \#E\br{\FF_p} \equiv 2 \mod 3 $ but $ p $ does not split completely in $ K $, the only remaining case being when $ \#E\br{\FF_p} \equiv 1 \mod 3 $ and $ \chi\br{N} \ne 1 $. The first statement then follows by substituting the residues of $ \#E\br{\FF_p} $ modulo $ 3 $, and noting that $ \gcd_{E, 3} $ cancels out the factors in $ \LLL\br{E} $ by definition. For the final statement, the description of the groups in Lemma \ref{lem:cubic} implies that $ \#E\br{\FF_p} \equiv \lambda \mod 3 $ occurs with the proportion of matrices $ M \in \SL\br{3} $ with $ \tr\br{M} = 2 - \lambda $, by Chebotarev's density theorem. If $ p $ splits completely in $ K $, then $ \Fr_p = 1 $ in $ \PGL\br{3} $, so $ \Fr_p = \pm1 $ in $ \GL\br{3} $, and in particular in $ \SL\br{3} $, but the condition $ \#E\br{\FF_p} \equiv 1 \mod 3 $ forces $ \Fr_p = -1 $, which has trace $ 1 $. The final statement then follows by counting matrices in $ \SL\br{3} $ with a certain trace.
\end{proof}

\begin{remark}
Elliptic curves with discriminant $ \Delta = \pm N^n $ for some $ 3 \nmid n $ satisfy this property, since $ \sqrt[3]{N} $ can then be expressed in terms of $ \sqrt[3]{\Delta} $ \cite[Section 5.3b]{Ser72}. This completely explains the numerical data by Kisilevsky--Nam for the elliptic curve 11a1 where $ \gcd_{E, 3} = 5 $ and the elliptic curves 15a1 and 17a1 where $ \gcd_{E, 3} = 4 $, all of which satisfy the assumptions of Proposition \ref{prop:cubic}. The same method cannot explain the density patterns when $ 3 \mid \gcd_{E, 3} $, such as for the remaining three elliptic curves 14a1, 19a1, and 37b1 considered by Kisilevsky--Nam, since Corollary \ref{cor:congruence} is a priori not valid modulo $ 9 $, as noted in Remark \ref{rem:equality}.
\end{remark}

\pagebreak

\appendix

\section{Tables of Galois images}

\def\arraystretch{1.5}

This section tabulates the mod-$ 3 $ and $ 3 $-adic Galois images of elliptic curves $ E $ with restricted $ 3 $-torsion up to conjugacy, crucially used in Proposition \ref{prop:divide} and Theorem \ref{thm:density}. In both tables, the examples of elliptic curves are chosen so that it has the smallest conductor possible satisfying $ 3 \nmid \c_0\br{E} $ and $ \L\br{E, 1} \ne 0 $, but in general there are many elliptic curves with each prescribed mod-$ 3 $ or $ 3 $-adic Galois image.

\subsection*{Mod-3 Galois images of elliptic curves without 3-torsion}

\def\arraystretch{1}

The possible mod-$ 3 $ Galois images are well-known \cite[Theorem 1.2, Proposition 1.14, and Proposition 1.16]{Zyw15}, and those of elliptic curves without $ 3 $-torsion are tabulated as follows. The subgroup generators are taken from Sutherland \cite[Section 6.4]{Sut16}, and are viewed as elements of $ \GL\br{3} $. The final two columns give examples of elliptic curves with the given mod-$ 3 $ Galois image with $ b = 1 $ and $ b = 2 $ respectively, where $ b \in \FF_3 $ is the residue of $ \BSD\br{E} $ modulo $ 3 $. The column labelled $ \G_{E, 3} $ lists the elements of $ \G_{E, 3} = \im\overline{\rho_{E, 3}} \cap \SL\br{3} $ as defined in Proposition \ref{prop:density}, so the residual densities can be read off directly in the column labelled $ \delta_{E, 3} $ as ordered triples $ \br{\delta_{E, 3}\br{0}, \delta_{E, 3}\br{-b}, \delta_{E, 3}\br{b}} $, which is used in Theorem \ref{thm:density}.

\captionof{table}{\label{tab:mod3} mod-3 Galois images of elliptic curves without 3-torsion}

\vspace{-0.5cm}

$$
\begin{array}{|c|c|c|c|c|c|}
\hline
\im\overline{\rho_{E, 3}} & \text{Generators of} \ \im\overline{\rho_{E, 3}} & \G_{E, 3} & \delta_{E, 3} & b = 1 & b = 2 \\
\hline
\GL\br{3} & \twobytwo{2}{0}{0}{1} \twobytwo{2}{1}{2}{0} & \SL\br{3} & \tfrac{3}{8}, \tfrac{3}{8}, \tfrac{1}{4} & \text{11a2} & \text{11a1} \\
\hline
\text{3B.1.2} & \twobytwo{2}{0}{0}{1} \twobytwo{1}{1}{0}{1} & \twobytwo{1}{0}{0}{1} \twobytwo{1}{1}{0}{1} \twobytwo{1}{2}{0}{1} & 1, 0, 0 & \text{19a2} & \text{14a3} \\
\hline
\text{3B} & \twobytwo{2}{0}{0}{2} \twobytwo{1}{0}{0}{2} \twobytwo{1}{1}{0}{1} & \begin{array}{c} \twobytwo{1}{0}{0}{1} \twobytwo{1}{1}{0}{1} \twobytwo{1}{2}{0}{1} \\ \twobytwo{2}{0}{0}{2} \twobytwo{2}{1}{0}{2} \twobytwo{2}{2}{0}{2} \end{array} & \tfrac{1}{2}, \tfrac{1}{2}, 0 & \text{50b3} & \text{50b1} \\
\hline
\text{3Cs} & \twobytwo{2}{0}{0}{2} \twobytwo{1}{0}{0}{2} & \twobytwo{1}{0}{0}{1} \twobytwo{2}{0}{0}{2} & \tfrac{1}{2}, \tfrac{1}{2}, 0 & \text{304e2} & \text{304b2} \\
\hline
\text{3Nn} & \twobytwo{1}{0}{0}{2} \twobytwo{2}{1}{2}{2} & \begin{array}{c} \twobytwo{1}{0}{0}{1} \twobytwo{1}{1}{1}{2} \twobytwo{0}{2}{1}{0} \twobytwo{2}{1}{1}{1} \\ \twobytwo{2}{0}{0}{2} \twobytwo{2}{2}{2}{1} \twobytwo{0}{1}{2}{0} \twobytwo{1}{2}{2}{2} \end{array} & \tfrac{1}{8}, \tfrac{1}{8}, \tfrac{3}{4} & \text{704e1} & \text{245b1} \\
\hline
\text{3Ns} & \twobytwo{2}{0}{0}{2} \twobytwo{0}{2}{1}{0} \twobytwo{1}{0}{0}{2} & \twobytwo{1}{0}{0}{1} \twobytwo{2}{0}{0}{2} \twobytwo{0}{2}{1}{0} \twobytwo{0}{1}{2}{0} & \tfrac{1}{4}, \tfrac{1}{4}, \tfrac{1}{2} & \text{1690d1} & \text{338d1} \\
\hline
\end{array}
$$

\vspace{0.5cm}

The remaining two mod-$ 3 $ Galois images 3B.1.1 and 3Cs.1.1 have $ 3 $-torsion, so computing the residual densities require finer information from their mod-$ 9 $ Galois images.

\subsection*{3-adic Galois images of elliptic curves with 3-torsion}

The possible $ 3 $-adic Galois images are classified \cite[Corollary 1.3.1 and Corollary 12.3.3]{RSZB22}, and those of elliptic curves with $ 3 $-torsion are tabulated as follows. The subgroup generators are taken from Rouse--Sutherland--Zureick-Brown \cite[Software Repository]{RSZB22}, and are viewed as elements of $ \GL\br{3^m} $ if their corresponding $ 3 $-adic Galois images are of the form $ 3^m.i.g.n $. The column labelled $ \M_{E, 3} $ gives matrices $ \M_{E, 3} \in \im\overline{\rho_{E, 9}} $ such that $ 1 + \det\br{\M_{E, 3}} - \tr\br{\M_{E, 3}} = 3 $, which is used in Proposition \ref{prop:divide}. The final two columns give examples of elliptic curves with the given $ 3 $-adic Galois image with $ b = 1 $ and $ b = 2 $ respectively, where $ b \in \FF_3 $ is the residue of $ 3\BSD\br{E} $ modulo $ 3 $. The column labelled $ \#\G_{E, 9} $ lists the cardinalities of $ \G_{E, 9} $ as defined in Proposition \ref{prop:density} for reference, but the residual densities are calculated separately in the column labelled $ \delta_{E, 3} $ as ordered triples $ \br{\delta_{E, 3}\br{0}, \delta_{E, 3}\br{-b}, \delta_{E, 3}\br{b}} $, which is used in Theorem \ref{thm:density}.

\pagebreak

\captionof{table}{\label{tab:3adic} 3-adic Galois images of elliptic curves with 3-torsion}

\vspace{-0.5cm}

$$
\begin{array}{|c|c|c|c|c|c|c|c|}
\hline
\im\rho_{E, 3} & \im\overline{\rho_{E, 3}} & \text{Generators of} \ \im\rho_{E, 3} & \M_{E, 3} & \#\G_{E, 9} & \delta_{E, 3} & b = 1 & b = 2 \\
\hline
3.8.0.1 & \text{3B.1.1} & \twobytwo{1}{2}{0}{1} \twobytwo{1}{2}{0}{2} & \twobytwo{4}{0}{0}{2} & 243 & \tfrac{5}{9}, \tfrac{2}{9}, \tfrac{2}{9} & \text{20a2} & \text{20a1} \\
\hline
3.24.0.1 & \text{3Cs.1.1} & \twobytwo{2}{0}{0}{1} & \twobytwo{2}{0}{0}{4} & 81 & 1, 0, 0 & \text{26a1} & \text{14a1} \\
\hline
9.24.0.1 & \text{3B.1.1} & \twobytwo{7}{5}{0}{8} \twobytwo{1}{8}{0}{4} & \twobytwo{4}{0}{0}{2} & 81 & 1, 0, 0 & \text{189c3} & \text{702e3} \\
\hline
9.24.0.2 & \text{3B.1.1} & \twobytwo{7}{3}{0}{8} \twobytwo{7}{2}{6}{2} & \twobytwo{4}{0}{0}{2} & 81 & \tfrac{1}{3}, \tfrac{2}{3}, 0 & \text{} & \text{} \\
\hline
9.72.0.1 & \text{3Cs.1.1} & \twobytwo{5}{6}{3}{1} \twobytwo{4}{6}{0}{1} \twobytwo{5}{0}{0}{1} & \text{N/A} & 27 & 1, 0, 0 & \text{54b1} & \text{} \\
\hline
9.72.0.2 & \text{3Cs.1.1} & \twobytwo{8}{3}{3}{4} \twobytwo{8}{6}{0}{4} \twobytwo{1}{3}{0}{1} & \twobytwo{8}{0}{0}{4} & 27 & 1, 0, 0 & \text{54a1} & \text{} \\
\hline
9.72.0.3 & \text{3Cs.1.1} & \twobytwo{8}{3}{3}{4} \twobytwo{5}{0}{0}{7} & \twobytwo{2}{0}{0}{4} & 27 & 1, 0, 0 & \text{19a1} & \text{7094c1} \\
\hline
9.72.0.4 & \text{3Cs.1.1} & \twobytwo{2}{3}{6}{7} \twobytwo{1}{6}{6}{1} \twobytwo{4}{3}{6}{4} & \twobytwo{5}{0}{0}{4} & 27 & 1, 0, 0 & \text{} & \text{} \\
\hline
9.72.0.5 & \text{3B.1.1} & \twobytwo{1}{2}{0}{8} \twobytwo{1}{7}{0}{4} & \text{N/A} & 27 & 1, 0, 0 & \text{54b3} & \text{} \\
\hline
9.72.0.6 & \text{3B.1.1} & \twobytwo{1}{5}{0}{8} \twobytwo{4}{1}{0}{8} & \twobytwo{4}{0}{0}{8} & 27 & 1, 0, 0 & \text{} & \text{} \\
\hline
9.72.0.7 & \text{3B.1.1} & \twobytwo{4}{4}{0}{5} \twobytwo{1}{0}{0}{8} & \twobytwo{4}{0}{0}{5} & 27 & 1, 0, 0 & \text{} & \text{} \\
\hline
9.72.0.8 & \text{3B.1.1} & \twobytwo{7}{7}{6}{4} \twobytwo{7}{7}{6}{2} & \twobytwo{1}{2}{3}{1} & 27 & \tfrac{1}{3}, \tfrac{2}{3}, 0 & \text{} & \text{} \\
\hline
9.72.0.9 & \text{3B.1.1} & \twobytwo{4}{2}{3}{5} \twobytwo{1}{3}{0}{1} \twobytwo{7}{2}{3}{1} & \twobytwo{4}{1}{0}{5} & 27 & \tfrac{1}{3}, \tfrac{2}{3}, 0 & \text{} & \text{} \\
\hline
9.72.0.10 & \text{3B.1.1} & \twobytwo{1}{5}{6}{5} \twobytwo{1}{0}{0}{8} & \twobytwo{4}{0}{0}{8} & 27 & \tfrac{1}{3}, \tfrac{2}{3}, 0 & \text{486c1} & \text{} \\
\hline
27.72.0.1 & \text{3B.1.1} & \twobytwo{7}{23}{0}{5} \twobytwo{1}{8}{9}{16} & \twobytwo{4}{0}{0}{2} & 81 & 1, 0, 0 & \text{} & \text{} \\
\hline
27.648.13.25 & \text{3B.1.1} & \twobytwo{16}{4}{0}{16} \twobytwo{1}{17}{0}{26} & \twobytwo{4}{0}{0}{5} & 27 & 1, 0, 0 & \text{N/A} & \text{N/A} \\
\hline
27.648.18.1 & \text{3B.1.1} & \twobytwo{16}{15}{9}{25} \twobytwo{10}{16}{9}{17} \twobytwo{7}{22}{6}{4} & \twobytwo{4}{1}{0}{5} & 27 & \tfrac{1}{3}, \tfrac{2}{3}, 0 & \text{108a1} & \text{36a1} \\
\hline
27.1944.55.31 & \text{3Cs.1.1} & \twobytwo{2}{18}{12}{25} \twobytwo{16}{18}{21}{16} & \twobytwo{5}{0}{0}{4} & 9 & 1, 0, 0 & \text{N/A} & \text{N/A} \\
\hline
27.1944.55.37 & \text{3Cs.1.1} & \twobytwo{17}{6}{21}{10} \twobytwo{2}{3}{3}{25} & \twobytwo{5}{0}{3}{4} & 9 & 1, 0, 0 & \text{27a1} & \text{N/A} \\
\hline
27.1944.55.43 & \text{3B.1.1} & \twobytwo{19}{10}{18}{8} \twobytwo{4}{11}{3}{16} & \twobytwo{4}{4}{0}{5} & 9 & \tfrac{1}{3}, \tfrac{2}{3}, 0 & \text{243b1} & \text{N/A} \\
\hline
27.1944.55.44 & \text{3B.1.1} & \twobytwo{10}{23}{3}{13} \twobytwo{13}{13}{0}{14} & \twobytwo{4}{4}{0}{5} & 9 & \tfrac{1}{3}, \tfrac{2}{3}, 0 & \text{N/A} & \text{N/A} \\
\hline
\end{array}
$$

\vspace{0.5cm}

\begin{remark}
Note that many of the $ 3 $-adic Galois images seemingly do not represent any elliptic curves with $ b \ne 0 $, in the sense that a search through the LMFDB \cite{Col} yields no examples satisfying $ 3 \nmid \c_0\br{E} $ and $ \L\br{E, 1} \ne 0 $, but current results a priori do not rule out their existence. To rule out examples for a specific $ 3 $-adic Galois image, one could consider the explicit family of Weierstrass equations parameterised by the associated modular curve, and then investigate the divisibility of Tamagawa numbers as in Lemma \ref{lem:borel}. Such is the case for the last six $ 3 $-adic Galois images arising from elliptic curves with complex multiplication, where their associated modular curves have effectively computable finite sets of rational points.
\end{remark}

\pagebreak

\bibliography{main}

\end{document}